\newtheorem{teo}{Theorem}[section]
\newtheorem{lema}[teo]{Lemma}
\newtheorem{propo}[teo]{Proposition}
\theoremstyle{definition}
\newtheorem{definition}[teo]{Definition}
\theoremstyle{remark}
\newtheorem{remark}[teo]{Remark}
\newtheorem{ej}[teo]{Example}
\theoremstyle{estilolista}
\newtheorem{itemnumerado}[equation]{}
\numberwithin{equation}{section}
		\def\zLL{\ensuremath{\mathcal{L}}}
		\def\zR{\ensuremath{\mathbb{R}}}
		\def\z{\ensuremath{\mathbb{R}^n}}
		\def\zZ{\ensuremath{\mathbb{Z}}}
		\def\zN{\ensuremath{\mathbb{N}}}
		\def\zL{\ensuremath{\mathbb{L}}}
		\newcommand{\ff}[3]{#1:#2\rightarrow #3}
		\newcommand{\fx}{f(x)}
		\newcommand{\lpw}[2]{L^{#1(\cdot)}_#2}
		\newcommand{\lpwr}[2]{L^{#1(\cdot)}_#2(\z)}
		\newcommand{\lp}[1]{L^{#1(\cdot)}}
		\newcommand{\lploc}[1]{\lp{#1}_{\rm{loc}}(\z)}
		\newcommand{\lplocc}[1]{L^{#1}_{\rm{loc}}(\z)}
		\newcommand{\lpr}[1]{L^{#1(\cdot)}(\z)}
		\newcommand{\normadeflpw}[3]{\left\Vert#1\right\Vert_{L^{#2(\cdot)}_#3}}
		\newcommand{\normadeflpl}[2]{\left\Vert#1\right\Vert_{#2(\cdot,L)}}
		\newcommand{\normadeflp}[2]{\left\Vert#1\right\Vert_{L^{#2(\cdot)}}}
		\newcommand{\normadefpv}[2]{\left\Vert#1\right\Vert_{#2(\cdot)}}
		\newcommand{\normadefp}[2]{\left\Vert#1\right\Vert_{#2}}
		\newcommand{\esp}{\,\,\,\,\,\,\,\,\,}
		\newcommand{\texto}[1]{\textrm{#1}}
		\newcommand{\ex}[1]{#1(\cdot)}
		\newcommand{\ca}{\mathcal{X}_Q}
		\newcommand{\car}{\mathcal{X}}
		\newcommand{\inplogdern}{\in\mathcal{P}^{log}(\z)}
		\newcommand{\mbetas}[2]{M_{#1(\cdot),#2}}
\begin{document}
	\title{Musielak Orlicz bumps and Bloom type estimates for commutators of Calder\'on Zygmund and fractional integral operators on variable Lebesgue spaces via sparse operators}
	
	\author{Luciana Melchiori\thanks{lmelchiori@santafe-conicet.gov.ar, luchimelchiori@gmail.com, CONICET-UNL, Santa Fe, Argentina.}, Gladis Pradolini\thanks{gladis.pradolini@gmail.com. CONICET-UNL, Santa Fe, Argentina.}\, and Wilfredo Ramos\thanks{oderfliw769@gmail.com. CONICET-(FaCENA-UNNE), Corrientes, Argentina.}}
	
	\renewcommand{\thefootnote}{\fnsymbol{footnote}}
	\footnotetext{2010 {\em Mathematics Subject Classification}:
		42B25} \footnotetext {{\em Keywords and phrases}:
		Commutators, Variable Lebesgue spaces, Sparse operators}
	\date{\vspace{-1.5cm}}
	
	\maketitle
	
	\begin{abstract}
		We obtain Musielak Orlicz bumps conditions on a pair of weights for the boundedness of Calderón Zygmund operators and their commutators between variable Lebesgue spaces with different weights.
		The symbols of the commutators belong to a wider class of functions.
		
		We also give Bloom type estimates for commutators of Calderón Zygmund and fractional integral operators in the variable Lebesgue context.
		
		The techniques involved in both type of results are related with the theory of sparse domination.
		
	\end{abstract}

\section{Introduction and main results}	

	One of the main purpose of this paper is to obtain sufficient conditions on a pair of weights in order to attain two-weighted norm inequalities for Calderón Zygmund operators (CZO's), and their commutators, between variable Lebesgue spaces. 
	We give Musielak Orlicz bump conditions on the weights that guarantee these results. 
	The symbols of the commutators belong to a wider class of functions including BMO and Lipschitz spaces.
	
	The main motivation for studying the results above is \cite{P}. 
	In this article the author studied sufficient conditions on a pair of weights in order to obtain boundedness results for potential operators between Lebesgue spaces with different weights.
	Later in \cite{CP}, a similar problem was studied for CZO's and their commutators with BMO symbols, obtaining Orlicz bump inequalities on a pair of weights as sufficient conditions. 
	In that paper, Cruz Uribe and Pérez conjectured that weaker conditions that involve Young functions are sufficient to obtain the desired boundedness. 
	This conjecture have been studied extensively, for a complete history we refer the reader to \cite{CMPc,CMPb,CMPd,L4} and \cite{CRV} for the extensive references that they contain.
	One of our result extend the main theorem in \cite{CP} to the context of Musielak Orlicz spaces. 
\medskip
	
	Another goal in this paper is to obtain Bloom type estimates on variable Lebesgue spaces for commutators of CZO's and fractional integral operators with symbols belonging to other modified Lipschitz class. 
	
	In \cite{Bloom}, Bloom obtained boundedness results of the type $L^p(\mu)\rightarrow L^p(\lambda)$ with $\mu$ and $\lambda\in A_p$, for commutator of the Hilbert transform. 
	The symbol involved belongs to a weighted version of the bounded mean oscilation space, $BMO_\nu$, where $\nu=(\mu/\lambda)^{1/p}$.
	Later, in \cite{HLW} and \cite{LOR}, the authors extend the results above to $\omega$-Calderón Zygmund operators, with $\omega(t)=t^\gamma$, $\gamma>0$, and for general $\omega$, respectively (see also \cite{LORR} for higher order commutators).
	
	On the other hand, in \cite{HRS} and \cite{AMPRR}, a version of the Bloom's result for the fractional integral operator and their commutators were given.	
	
	The principal tools in order to obtain the mentioned results are related with the sparse domination techniques (see section \S\ref{sparse}).
	\medskip
	
	We now introduce the general context where we shall be working with.
	
	Let $\ff{\ex{p}}{\z}{[1,\infty]}$ be a measurable function. For $\mathrm{A}\subset\z$ we define
	\vspace{-0.2cm}
	
	$$p^-_\mathrm{A}= ess \inf_{x\in \mathrm{A}}p(x)\esp\esp\esp p^+_\mathrm{A}= ess \sup_{x\in \mathrm{A}}p(x).$$
	For simplicity we denote $p^-=p^-_{\z}$ and $p^+=p^+_{\z}$.
	
	With $\ex{p'}$ we denote the conjugate exponent of $\ex{p}$ given by $\ex{p'}=\ex{p}/(\ex{p}-1)$.
	It is not hard to prove that $(p')^-=(p^+)'$ and $(p')^+=(p^-)'$.
	
	We say that $\ex{p}\in\mathcal{P}(\z)$ if $1\le p^-\leq p^+\le\infty$ and we denote by $\mathcal{P}^{log}(\z)$ the set of the exponents $\ex{p}\in\mathcal{P}(\z)$ that satisfy the following inequalities
	\vspace{-0.2cm}
		
	\begin{equation*}\label{log1}
			\left|\frac{1}{p(x)}- \frac{1}{p(y)}\right|\leq \frac{C}{\log(e+1/|x-y|)}, \,\,x,y\in\z
		\end{equation*}
	and
		\begin{equation}\label{log2}
			\left|\frac{1}{p(x)}- \frac{1}{p_\infty}\right|\leq \frac{C}{\log(e+|x|)}, \,\, x\in\z
		\end{equation}
	for some positive constants $C$ and $p_\infty$.
	It is easy to see that the inequality (\ref{log2}) implies that $\lim_{|x|\rightarrow \infty} {1}/{p(x)} = {1}/{p_\infty}$.
	The conditions on $1/\ex{p}$ above are known as local and global log-H\"older conditions, respectively.
	
	If $\ex{p}\in\mathcal{P}(\z)$, we define the function
	\vspace{-0.2cm}
	
			$$\varphi_{\ex{p}}(y,t)=
		\left\{
		\begin{array}{lc}
		t^{p(y)}, & 1\le p(y) <\infty \\
		\infty\cdot\car_{(1,\infty)}(t), & p(y)=\infty,
		\end{array}
		\right.$$
				
	for $t\ge0$ and $y\in\z$, with the convention $\infty\cdot 0=0$. 
	Then the variable exponent Lebesgue space $\lpr{p}$ is the set of the measurable functions $f$ defined on $\z$ such that, for some positive $\lambda$,
	\vspace{-0.9cm}
	
		$$\int_{\z} \varphi_{\ex{p}}(x,|\fx|/\lambda)\, dx<\infty.$$
	A Luxemburg norm can be defined in $\lpr{p}$ by taking
	\vspace{-0.2cm}
	
		$$\normadefpv{f}{p}=\inf\left\lbrace \lambda >0 : \int_{\z} \varphi_{\ex{p}}(x,|\fx|/\lambda)\, dx\leq 1 \right\rbrace.$$
	By $\lploc{p}$ we denote the space of the functions $f$ such that $f\in\lpr{p}(U)$ for every compact set $U\subset\z$.

	A locally integrable function $w$ defined in $\z$ which is positive almost everywhere is called a weight.
	For $\ex{p}\in\mathcal{P}(\z)$ we define the weighted variable Lebesgue space $\lpw{p}{w}(\z)$ as the set of the measurable functions $f$ defined on $\z$ such that $fw\in\lp{p}(\z)$.
	(See \cite{CF2} and \cite{DHHR} for more information about varible Lebesgue spaces).
\medskip

	By a cube $Q$ in $\z$ we shall understand a cube with sides parallel to the coordinate axes. By $\car_Q$ and $f_Q$ we denote the characteristic function of $Q$ and the average of $f$ over $Q$, respectively.

	We shall say that $A\lesssim B$ if there exist a positive constant $C$ such that $A\leq C B$.

	Throughout this paper, we use $m$ to denote a nonnegative integer. 

	We now introduce the operators we shall be working with and state the corresponding main results for each one. 
	
	Let $\omega : [0, 1] \rightarrow [0,\infty)$ a continuous, increasing and subadditive function such that $\omega(0) = 0$. 
	We say that a linear operator $T$ is an $\omega$-Calderón-Zygmund operator on $\z$ if $T$ is bounded on $L^2(\z)$, and can be represented as
	\vspace{-0.2cm}
	
		$$Tf(x) = \int_{\z}	K(x, y)f(y)\,dy,\esp x\notin \text{supp}\, f.$$
	The kernel $K$ satisfyies the size condition 
	\vspace{-0.2cm}
	
		$$|K(x, y)| \leq \frac{C_K}{|x-y|^n},\esp x\neq y,$$		
	for some positive constant $C_K$, and the smoothness condition given by
	\vspace{-0.2cm}
	
		$$|K(x, y)-K(x', y)|+|K(y, x)-K(y, x')|\leq \omega\left(\frac{|x-x'|}{|x-y|}\right)\frac{1}{|x-y|^{n}},\esp|x-y|>2|x-x'|.$$ 
	
	We denote $T\in \omega$-CZO if $T$ is an $\omega$-Calderón-Zygmund operator with $\omega$ satisfiying the Dini condition 
	\vspace{-0.3cm}
	
		$$\int_{0}^{1}\omega(t)\,\frac{dt}{t}< \infty.$$ 
	
	Given a linear operator $T$ and a locally integrable function $b$, formally define the commutator of $T$ with symbol $b$ by 
	\vspace{-0.2cm}
	
		\begin{equation*}\label{commutators}
		[b,T]f(x)=b(x)\,Tf(x)-T(bf)(x),\esp x\in\z.
		\end{equation*}
	
	The higher order commutator of order $m$ of $T$ is defined by 
	\vspace{-0.2cm}
	
		\begin{equation*} 
		T_{b}^0=T, \quad T_{b}^m=\left[b, T_b^{m-1}\right]. 
		\end{equation*} 
	
	We say that the functional $a$ satisfies the $T_\infty$ condition (and we denote $a\in T_\infty$) if there exists a positive constant $\mathbf{t}_\infty$ such that for every cube $Q$ and every cube $Q'\subset Q$,
		\begin{equation}\label{cond t infinito}
			a(Q') \leq \mathbf{t}_\infty a(Q).
		\end{equation}
	We denote the least constant $\mathbf{t}_\infty$ in (\ref{cond t infinito}) by $\|a\|_{\mathbf{t}_\infty}$. 
	Clearly, $\|a\|_{\mathbf{t}_\infty}\ge1$, 
	
	Let $a\in T_\infty$, we say that a function $b\in\lplocc{1}$ belongs to the generalized Lipschitz space $\mathcal{L}_a$ if
	\vspace{-0.2cm}
	
		\begin{equation*}\label{lipschitz a}
			\sup_Q \frac{1}{a(Q)|Q|}\int_Q |b(x)-b_Q|\,dx<\infty,
		\end{equation*}
	where the supremum is taken over all cubes $Q\subset\z$.

	We are now in position to state our first result.
	\begin{teo}\label{conmutador simbolo lip general}
		Let $T\in \omega$-CZO and $\ex{p}\in\mathcal{P}^{log}(\z)$ such that $1<p^-\le p^+<\infty$.
		Let $m\in\zN\cup\{0\}$  and $b\in\mathcal{L}_a$ with $a\in T_\infty$.  
		Suppose that $(v,w)$ is any couple of weights such that $v\in \lploc{p}$ and for some constants $S>p^+/p^-$ and $R>(p')^+/(p')^-$,
			\begin{equation}\label{cond pesos teo CZ lipschitz general}
				\sup_Q \,a(Q)^{m}
				\frac{\normadefpv{\car_{Q}w}{Sp}}{\normadefpv{\car_{Q}}{Sp}}
				\frac{\normadefpv{\car_{Q}v^{-1}}{Rp'}}{\normadefpv{\car_{Q}}{Rp'}}
				<\infty.
			\end{equation}
		Then 
		\vspace{-0.3cm}
			
			$$T^m_b:\lpw{p}{v}(\z)\hookrightarrow\lpw{p}{w}(\z).$$
	\end{teo}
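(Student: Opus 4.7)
The plan is to follow the sparse-domination program outlined in \S\ref{sparse}. For $b\in BMO$, a pointwise sparse domination of $T^m_b$ is classical (see \cite{LORR}), and the same recursive stopping-time construction extends to the generalized Lipschitz class $\mathcal{L}_a$: the role of doubling for $BMO$-averages is played by the hypothesis $a\in T_\infty$, in the spirit of \cite{AMPRR}. This yields, for every $f$ with compact support, a $\tfrac{1}{2}$-sparse family $\mathcal{S}=\mathcal{S}(f)$ of dyadic cubes such that
\[|T^m_b f(x)|\lesssim\sum_{k=0}^{m}\binom{m}{k}\sum_{Q\in\mathcal{S}}|b(x)-b_Q|^{m-k}\langle|b-b_Q|^{k}|f|\rangle_Q\car_Q(x),\]
so it suffices to bound each generalized sparse form $\mathcal{A}^{k,m-k}_{\mathcal{S},b}f$ from $\lpw{p}{v}(\z)$ to $\lpw{p}{w}(\z)$ uniformly in $\mathcal{S}$.

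Next, I would dualize. Since $\ex{p}\inplogdern$ with $1<p^-\le p^+<\infty$, the space $\lpw{p}{w}(\z)$ has the expected predual $L^{p'(\cdot)}_{w^{-1}}(\z)$, and the operator bound reduces to the bilinear inequality
\[\sum_{Q\in\mathcal{S}}|Q|\,\langle|b-b_Q|^{m-k}|h|w\rangle_Q\,\langle|b-b_Q|^{k}|f|\rangle_Q\lesssim\normadeflp{fv}{p}\normadeflp{hw^{-1}}{p'},\]
for arbitrary nonnegative $f,h$ in the corresponding unit balls. On each $Q\in\mathcal{S}$ I would apply a four-fold generalized Hölder inequality in variable Lebesgue spaces, distributing the factors $|b-b_Q|^{k}$, $v^{-1}$, $fv$ in one average and the symmetric triple $|b-b_Q|^{m-k}$, $w$, $hw^{-1}$ in the other. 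A John--Nirenberg type self-improvement for $\mathcal{L}_a$ (available from $a\in T_\infty$) gives $\|(b-b_Q)\car_Q\|_{L^{s(\cdot)}}\lesssim a(Q)\normadefpv{\car_Q}{s}$ for all admissible exponents $s(\cdot)$, so the two $|b-b_Q|$ factors produce the prescribed $a(Q)^{m}$ weight, and the remaining four weight norms pair up exactly as in \eqref{cond pesos teo CZ lipschitz general}.

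Substituting back, the bilinear form is majorized by the bump supremum in \eqref{cond pesos teo CZ lipschitz general} times a sparse sum of $L^{p(\cdot)}$-localized averages of $fv$ and $hw^{-1}$. Sparseness replaces $|Q|$ by $|E_Q|$ with the $E_Q$ pairwise disjoint, and a Carleson embedding argument based on the $\lp{p}$-boundedness of the Hardy--Littlewood maximal operator (available since $\ex{p}\inplogdern$) closes the estimate by $\normadeflp{fv}{p}\normadeflp{hw^{-1}}{p'}$. I expect the main obstacle to be the four-fold Hölder decomposition itself: because the reciprocals of the auxiliary variable exponents must sum to $1$ \emph{pointwise} in $\z$, they have to be chosen with care, and the quantitative slack $S>p^+/p^-$, $R>(p')^+/(p')^-$ is precisely the amount of room one needs for the Musielak--Orlicz version of Hölder's inequality to accommodate the bumps $Sp(\cdot)$ and $Rp'(\cdot)$ while still leaving exponents in which $|b-b_Q|^{m}$ can be controlled via its generalized John--Nirenberg norm.
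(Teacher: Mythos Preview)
Your sketch is correct and follows essentially the paper's proof. Two small clarifications: the pointwise sparse domination (Theorem~\ref{est puntual del conmutador de orden m}) already holds for arbitrary $b\in L^1_{\rm loc}(\z)$, so no adaptation to $\mathcal{L}_a$ is needed---the hypothesis $a\in T_\infty$ enters only at the John--Nirenberg step (Lemma~\ref{Izuki con lipschitz general}); and the final ``Carleson embedding'' is carried out in the paper as a pointwise bound by $M_{L^{\mathrm A}}(fv)\,M_{L^{\mathrm B}}(gw^{-1})$ on the disjoint sets $E(Q)$, with \emph{constant} exponents $\mathrm A<p^-$, $\mathrm B<(p')^-$ whose existence is precisely what the slack $S>p^+/p^-$, $R>(p')^+/(p')^-$ buys, followed by Theorem~\ref{teo logl}.
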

\medskip
	\begin{remark}
		Note that the weights $w$ and $v=\sup_{Q}a(Q)^{m}{\normadefpv{\car_Qw}{Sp}}/{\normadefpv{\car_Q}{Sp}}$ where $S>p^+/p^-$ satisfies condition \eqref{cond pesos teo CZ lipschitz general}.
		In fact,
			\begin{eqnarray*}
				a(Q)^{m}
				\frac{\normadefpv{\car_Qw}{Sp}}{\normadefpv{\car_Q}{Sp}}
				\frac{\normadefpv{\car_Qv^{-1}}{Rp'}}{\normadefpv{\car_Q}{Rp'}}
				\leq				
				\frac{\normadefpv{\car_Qw}{Sp}}{\normadefpv{\car_Q}{Sp}}
				\frac{\normadefpv{\car_Q}{Sp}}{\normadefpv{\car_Qw}{Sp}}
				=1.
			\end{eqnarray*}
	\end{remark}
\medskip

	In the classical Lebesgue spaces, a proof can be found in \cite{CP} for the case $\omega(t)=t^\gamma$, $\gamma>0$ and $b\in BMO=\mathcal{L}_a$ with $a\equiv1$.
	
	Let us observe that, if $a(Q) = |Q|^{\delta/n}$, $0 < \delta < 1$, then $a\in T_\infty$ and it is known that $\mathcal{L}_a:=\zL(\delta)$ coincides with the classical Lipschitz spaces define as the set of functions $b$ such that
	\vspace{-0.4cm}
	
		$$|b(x)-b(y)|\lesssim |x-y|^\delta,\esp x,y\in\z.$$
	
	On the other hand, if $\ex{r}\in\mathcal{P}^{log}(\z)$, $n/r^-\le \alpha $ and $\ex{\delta}$ is the exponent defined by 
		\begin{equation}\label{delta}
			\frac{\ex{\delta}}{n}=\frac{\alpha}{n}-\frac{1}{\ex{r}},
		\end{equation}
	the functional $a(Q)=\normadefpv{\car_Q}{n/\delta}$ satisfies the $T_\infty$ condition and $\mathcal{L}_a=\zL(\ex{\delta})$ is a variable version of the spaces $\zL(\delta)$ defined above. 

	Particularly, if $b\in\zL(\ex{\delta})$, we can improve the theorem above in the sense that we can consider weaker norms on the weights than those in \eqref{cond pesos teo CZ lipschitz general}, involving generalized $\Phi$-functions, denoted by G$\Phi$-functions (see section \S\ref{GFF} for more information about G$\Phi$-functions).
	In order to state the result we need some previous definitions.
	
	The norm associated to a given G$\Phi$-function $\Psi$ is define by
	\vspace{-0.2cm}
	
	$$\normadeflpl{f}{\Psi}=\inf\left\{\lambda>0:\int_{\z}\Psi\left(x,\frac{|f(x)|}{\lambda}\right)\,dx\leq 1\right\}$$
	and we denote by $L^\Psi(\z)$ the space of functions $f$ such that $\normadeflpl{f}{\Psi}<\infty$.
	
	A corresponding maximal operator associated to $\Psi$ is
	\vspace{-0.2cm}
	
		\begin{equation*}
			M_{\Psi(\cdot,L)} f(x)=\sup_{Q\ni x}\frac{\normadeflpl{\ca f}{\Psi}}{\normadeflpl{\ca}{\Psi}},\esp x\in\z.
		\end{equation*}
	For $\ex{\beta}\in\mathcal{P}(\z)$, a fractional type version of the maximal defined above is given by 
	\vspace{-0.2cm}
	
		\begin{equation*}\label{max psi}
			M_{\ex{\beta},\Psi(\cdot,L)} f(x)=\sup_{Q\ni x}\normadefpv{\ca}{\beta}\frac{\normadeflpl{\ca f}{\Psi}}{\normadeflpl{\ca}{\Psi}},\esp x\in\z.
		\end{equation*}
	
	We say that a 3-tuples of G$\Phi$-functions $(\mathsf{A},\mathsf{B},\mathsf{D})$ satisfy condition $\mathcal{F}$ if they verify
	
	\begin{itemnumerado}\label{desigualdad normas}
		$\normadefp{\ca}{\mathsf{A}(\cdot,L)}\normadefp{\ca}{\mathsf{B}(\cdot,L)}\lesssim\normadefp{\ca}{\mathsf{D}(\cdot,L)}$.
	\end{itemnumerado}
	\begin{itemnumerado}\label{a b y c para holder}
		$\mathsf{A}^{-1}(x,t)\mathsf{B}^{-1}(x,t)\lesssim \mathsf{D}^{-1}(x,t)$ where $\mathsf{A}^{-1}$ denotes the inverse of $\mathsf{A}$ (for the definition of the inverse of a G$\Phi$-function see section \S\ref{GFF}).	\end{itemnumerado}	\begin{itemnumerado}
		$\normadeflpl{\ca}{\mathsf{D}}\normadeflpl{\ca}{\mathsf{D}^*}\lesssim |Q|$, where $\mathsf{D}^*$ is the conjugate function of $\mathsf{D}$ (for its definition see section \S\ref{GFF}).\label{D y D conjugada}
	\end{itemnumerado}
	
	Necessary conditions on $\mathsf{D}$ where given in \cite{DHHR} 
	in order to verify \textbf{\ref{D y D conjugada}}.
\medskip 

	We can now state our result.
	
	\begin{teo}\label{conmutador simbolo lip variable}
		Let $T\in \omega$-CZO and let $\ex{p}\in\mathcal{P}(\z)$. 
		Let $0<\alpha<n$ and $\ex{r}\in\mathcal{P}^{log}(\z)$ such that $n/\alpha<r^-$ and $r_\infty\leq\ex{r}$, $\ex{\delta}$ be defined as in (\ref{delta}) and $b\in\zL(\ex{\delta})$.
		Assume that $(\mathsf{A},\mathsf{B},\mathsf{D})$ and $(\mathsf{E},\mathsf{H},\mathsf{J})$ are 3-tuples of G$\Phi$-functions satisfying condition $\mathcal{F}$ and 
			\begin{equation}\label{hipotesis maximal}
			M_{\mathsf{B}(\cdot,L)}:L^{\ex{p}}(\z)\hookrightarrow L^{\ex{p}}(\z) \esp \esp
			M_{\mathsf{H}(\cdot,L)}:L^{\ex{p'}}(\z)\hookrightarrow L^{\ex{p'}}(\z).
			\end{equation}
		Suppose that $(v,w)$ is any couple of weights such that $v\in \lploc{p}$ and
			\begin{equation}\label{cond 1}
				\sup_Q\,\normadefpv{\car_Q}{n/\delta}^m
				\frac{\normadefp{\car_Qw}{\mathsf{E}(\cdot,L)}}
				{\normadefp{\car_Q}{\mathsf{E}(\cdot,L)}}
				\frac{\normadefp{\car_Qv^{-1}}{\mathsf{A}(\cdot,L)}}
				{\normadefp{\car_Q}{\mathsf{A}(\cdot,L)}}
				<\infty.
			\end{equation}
		Then 
		\vspace{-0.3cm}
		
			$$T^{m}_b:\lpw{p}{v}(\z)\hookrightarrow\lpw{p}{w}(\z).$$
	\end{teo}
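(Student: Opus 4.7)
The plan is to reduce the claim to the boundedness of a family of sparse forms and then treat those forms by duality together with a generalized H\"older inequality driven by condition $\mathcal{F}$. The starting point is a pointwise sparse domination of higher-order commutators of $\omega$-CZO operators with Dini-type $\omega$, in the spirit of \cite{LOR,LORR}: for every $f$ there exist finitely many sparse families $\mathcal{S}_i$ such that
$$|T^{m}_b f(x)|\lesssim \sum_{i}\sum_{j=0}^{m}\binom{m}{j}\mathcal{A}^{j}_{\mathcal{S}_i,b}f(x),\quad \mathcal{A}^{j}_{\mathcal{S},b}f(x)=\sum_{Q\in\mathcal{S}}|b(x)-b_Q|^{m-j}\langle|b-b_Q|^{j}|f|\rangle_Q\car_Q(x).$$
It therefore suffices to bound each $\mathcal{A}^{j}_{\mathcal{S},b}$ from $\lpw{p}{v}(\z)$ to $\lpw{p}{w}(\z)$.

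I would dualize using the associate norm on $\lpr{p}$, substitute $f\to fv^{-1}$ and $g\to gw^{-1}$, and reduce to controlling the bilinear form
$$\sum_{Q\in\mathcal{S}}\langle|b-b_Q|^{j}|f|v^{-1}\rangle_Q\,\int_Q|b(x)-b_Q|^{m-j}|g(x)|w(x)^{-1}\,dx$$
uniformly for $\normadefpv{f}{p}\le 1$ and $\normadefpv{g}{p'}\le 1$. The Lipschitz oscillations are then eliminated using that $b\in\zL(\ex{\delta})$ with $\mathcal{L}_a=\zL(\ex{\delta})$ for $a(Q)=\normadefpv{\ca}{n/\delta}$: a John--Nirenberg-type estimate in this variable scale gives, for any $s\ge 1$,
$$\Big(\frac{1}{|Q|}\int_Q|b-b_Q|^{ks}\Big)^{1/s}\lesssim \|b\|^k_{\zL(\ex{\delta})}\normadefpv{\ca}{n/\delta}^{k},$$
so an extra H\"older step in both averages produces the factor $\normadefpv{\ca}{n/\delta}^{m}$ that appears in \eqref{cond 1}, leaving only averages of $|f|v^{-1}$ and $|g|w^{-1}$ over $Q$.

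On these averages I would apply the generalized H\"older inequality based on property \textbf{\ref{a b y c para holder}}: for $(\mathsf{A},\mathsf{B},\mathsf{D})$,
$$\frac{1}{|Q|}\int_Q|f|v^{-1}\lesssim \frac{\normadefp{\ca v^{-1}}{\mathsf{A}(\cdot,L)}}{\normadefp{\ca}{\mathsf{A}(\cdot,L)}}\langle M_{\mathsf{B}(\cdot,L)}f\rangle_Q,$$
and analogously on the $g$-side with $(\mathsf{E},\mathsf{H},\mathsf{J})$ and $w$, using property \textbf{\ref{D y D conjugada}} to absorb the leftover $\normadefp{\ca}{\mathsf{D}(\cdot,L)}\normadefp{\ca}{\mathsf{D}^{\ast}(\cdot,L)}$ factor as $|Q|$. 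Plugging these into the bilinear form, \eqref{cond 1} absorbs the two bump ratios and the Lipschitz factor, yielding
$$\sum_{Q\in\mathcal{S}}|E_Q|\langle M_{\mathsf{B}(\cdot,L)}f\rangle_Q\langle M_{\mathsf{H}(\cdot,L)}g\rangle_Q,$$
after replacing each $Q$ by its disjoint major subset $E_Q$ from the sparseness of $\mathcal{S}$. A standard sparse summation followed by H\"older in $\lpr{p}$ bounds this by $\normadefpv{M_{\mathsf{B}(\cdot,L)}f}{p}\normadefpv{M_{\mathsf{H}(\cdot,L)}g}{p'}$, and \eqref{hipotesis maximal} concludes the argument.

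The main obstacle is the Lipschitz absorption step: extracting precisely the factor $\normadefpv{\ca}{n/\delta}^{m}$ from products of $|b-b_Q|$ while keeping the remaining integrand in a form compatible with the Luxemburg--H\"older inequality. In the constant-exponent case this is a routine John--Nirenberg computation; for variable $\ex{\delta}$ one must use the log-H\"older regularity of $\ex{r}$ and the hypothesis $r_\infty\le\ex{r}$ in order to keep constants uniform along the sparse family and to pass between $\normadefpv{\ca}{n/\delta}^s$ and $\normadefpv{\ca}{sn/\delta}$ for the integer powers $s\in\{1,\dots,m\}$. Making this precise, and checking that the compatibility condition $\mathcal{F}$ is flexible enough to absorb the resulting variable-exponent corrections, is the delicate technical core of the proof.
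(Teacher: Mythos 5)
Your overall strategy — pointwise sparse domination, dualization, generalized H\"older driven by condition $\mathcal{F}$, sparseness to pass to the disjoint sets $E(Q)$, and then the hypotheses \eqref{hipotesis maximal} on the maximal operators — matches the paper's proof. The place where you diverge, and which you yourself flag as the ``delicate technical core,'' is the Lipschitz absorption step, and here your proposed route is both more complicated than needed and incompletely justified.

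You propose a John--Nirenberg-type $L^{s}$-oscillation bound for $b\in\zL(\ex{\delta})$ together with an additional H\"older step inside each average, and you correctly worry about uniformity of constants along the sparse family and about passing between $\normadefpv{\ca}{n/\delta}^{s}$ and $\normadefpv{\ca}{sn/\delta}$. In fact none of this is necessary: the paper invokes Lemma~\ref{obs teorema delta variable}, which gives the \emph{pointwise} bound $|b(z)-b_Q|\lesssim\normadefpv{\ca}{n/\delta}$ for $z\in kQ$, valid precisely under the hypotheses $\ex{r}\in\mathcal{P}^{\log}(\z)$ and $r_\infty\le\ex{r}$ assumed in the theorem. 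With this, $\frac{1}{|Q|}\int_Q|b-b_Q|^{k}H\lesssim\normadefpv{\ca}{n/\delta}^{k}H_Q$ is immediate — no reverse H\"older, no John--Nirenberg, no extra exponent to juggle — and the factor $\normadefpv{\ca}{n/\delta}^{m}$ is extracted cleanly before the generalized H\"older is applied to $f_Q$ and $g_Q$. Your sketch is therefore morally correct but relies on an auxiliary oscillation inequality you did not prove; replacing it with the pointwise lemma closes the gap and simultaneously removes the uniformity concerns you raised. Note also that the hypothesis $n/\alpha<r^{-}$ in the statement is what guarantees $\delta(\cdot)>0$ so that $\zL(\ex{\delta})$ is genuinely a (pointwise) Lipschitz-type class rather than a BMO-type class, which is why the pointwise estimate is available here but would not be for the BMO case of Theorem~\ref{conmutador simbolo lip general}.
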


 	Let us give some examples of G$\Phi$-functions that satisfy the hypothesis of the theorem above.
 	Notice first that, if we consider $\ex{p}\in\mathcal{P}(\z)$ and $\ex{q}$ with $q^+<\infty$, then for $x\in\z,$ $t\ge0$,	$\Psi(x,t)=t^{p(x)}(\log(e+t))^{q(x)}$	is a G$\Phi$-function.
	 	In this case, the space $L^\Psi(\z)$ will be denoted by $L^{\ex{p}}(\log L)^{\ex{q}}(\z)$.
	 	In [\cite{MMO}, Proposition 2.5] the authors proved that the Hardy-Littlewood maximal operator $M$ is bounded in this space when $\ex{p}\inplogdern$ with $1<p^-\le p^+<\infty$, and $\ex{q}\in\mathcal{P}^{{\rm loglog}}(\z)$.	
	 	We say that $\ex{q}\in\mathcal{P}^{{\rm loglog}}(\z)$ if $\ff{\ex{q}}{\z}{\zR}$ with $q^+<\infty$ such that, for some positive constant $C$, it satisfies the following inequality
	 	\vspace{-0.2cm}
	 	
			\begin{equation*}\label{loglog}
			|q(x)- q(y)|\leq \frac{C}{\log(e+\log(e+1/|x-y|))}, \textrm{ for every } x,y\in\z.
			\end{equation*}
	
	Let $\ex{p}\inplogdern$ with $1<p^-\le p^+<\infty$ and $\sigma>(p')^+/(p')^-$.
	The following G$\Phi$-functions satisfy condition $\mathcal{F}$ and the hyphoteses \eqref{hipotesis maximal} of the Theorem \ref{conmutador simbolo lip variable}.	
	
	\begin{ej}\label{ejemplo 1}
		$\mathsf{A}_1(x,t)=t^{\sigma p'(x)}(\log(e+t))^{\sigma p'(x)}$,
		$\mathsf{B}_1(x,t)=t^{(\sigma p')'(x)}$ and
		$\mathsf{D}_1(t)=t \log(e+t)$.
	\end{ej}

	\begin{ej}\label{ejemplo 2}
		If, in addition, $\ex{\mu}\inplogdern$ with $1<\mu^-\le\mu^+<\infty$ such that 
		\vspace{-0.2cm}
		
		\begin{equation*}
		1/\sigma\ex{p'}-1/\ex{\mu}>\epsilon
		\end{equation*}
		
		for some constant $\epsilon\in(0,1)$ and $\ex{\nu}\in\mathcal{P}^{{\rm loglog}}(\z)$ then, an example is given by
		\vspace{-0.2cm}
		
		$$\mathsf{A}_2(x,t)=t^{\mu(x)}(\log(e+t))^{\nu(x)\mu(x)},\, \mathsf{B}_2(x,t)=t^{(\sigma p')'(x)}$$ and $\mathsf{D}_2(x,t)=t^{\alpha(x)} (\log(e+t))^{\alpha(x)\nu(x)}$ where $\ex{\alpha}$ is defined by $1/\ex{\alpha}=1/\ex{\mu}+1/\ex{(\sigma p')'}.$
		
		In \cite{MPR} we checked above examples.
	\end{ej}

	\begin{remark}
		Note that the pair of weights $(w,v)$, where $v$ is defined by 
		\vspace{-0.2cm}
		$$v(x)=\sup_{Q\ni x}\normadefpv{\ca}{n/\delta}^{m}{\normadefp{\ca w}{\mathsf{E}(\cdot,L)}}/{\normadefp{\ca}{\mathsf{E}(\cdot,L)}}$$ satisfies condition \eqref{cond 1}.
		In fact,
		\vspace{-0.2cm}
		
		\begin{eqnarray*}
			\normadefpv{\ca}{n/\delta}^{m}
			\frac{\normadefp{\ca w}{\mathsf{E}(\cdot,L)}}{\normadefp{\ca}{\mathsf{E}(\cdot,L)}}
			\frac{\normadefp{\ca v^{-1}}{\mathsf{A}(\cdot,L)}}{\normadefp{\ca}{\mathsf{A}(\cdot,L)}}
			\leq				
			\frac{\normadefp{\ca w}{\mathsf{E}(\cdot,L)}}{\normadefp{\ca}{\mathsf{E}(\cdot,L)}}
			\frac{\normadefp{\ca}{\mathsf{E}(\cdot,L)}}{\normadefp{\ca w}{\mathsf{E}(\cdot,L)}}
			=1.
		\end{eqnarray*}
	\end{remark}
	
\bigskip

	Another class of symbols we shall consider is related with the Bloom type estimates in the variable Lebesgue spaces.
	
		\begin{definition}
			Let $\eta$ be a weight, $0\le\ex{\delta}<n$ and $b\in\lplocc{1}$. 
			We say that $b\in BMO_\eta^{\ex{\delta}}$ if 
			\vspace{-0.2cm}
			
			$$\normadefp{b}{BMO_\eta^{\ex{\delta}}}=\sup_{Q}\frac{1}{\normadefpv{\ca}{n/\delta}\eta(Q)}\int_Q|b-b_Q|<\infty.$$
		\end{definition}
	
	When $\ex{\delta}\equiv0$, $BMO^0_\nu=BMO_\nu$, the space introduced in \cite{Bloom}. 
	If $\ex{\delta}\equiv\delta$, with $0<\delta<1$, $BMO^\delta_\nu$ is a Lipschitz type space defined in \cite{HSV}.
	
	Our first result generalizyng Bloom's theorem in the variable Lebesgue context for CZO is the following.
	For the definitions of the classes of weights see section \S\ref{some previous}.
	
		\begin{teo}\label{teo simb lips bloom}
			Let $T\in \omega$-CZO and	
			$\ex{p}, \ex{q}\inplogdern$ 
			such that $1<p^-\le\ex{p}<\ex{q}\le q^+<\infty$. 
			Let $\ex{\delta}$ be the exponent defined by 
			\vspace{-0.2cm}
			
				$$\frac{m\ex{\delta}}{n}= \frac{1}{\ex{p}}-\frac{1}{\ex{q}}.$$
			Let $\mu,\lambda\in A_{\ex{p},\ex{q}}$ and $\nu={\mu}/{\lambda}$. 
			Assume that $b\in BMO_{\nu^{1/m}}^{\ex{\delta}}$, then
			\vspace{-0.2cm}
			
				$$T^m_b:\lpw{p}{\mu}(\z)\hookrightarrow\lpw{q}{\lambda}(\z).$$
		\end{teo}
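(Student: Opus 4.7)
The approach is to push the sparse domination strategy underlying the rest of the paper through in the Bloom type setting. The first step is to invoke the pointwise sparse domination for iterated commutators of $\omega$-CZO (with $\omega$ satisfying the Dini condition), as developed in the works cited by the authors: there exist finitely many sparse families $\{\mathcal{S}_j\}$ such that, for a.e.\ $x\in\z$,
$$|T^m_b f(x)| \lesssim \sum_j \sum_{k=0}^m \binom{m}{k} \mathcal{A}^{k}_{\mathcal{S}_j, b} f(x),$$
where
$$\mathcal{A}^{k}_{\mathcal{S}, b} f(x) = \sum_{Q \in \mathcal{S}} |b(x) - b_Q|^{m-k}\, \langle |b - b_Q|^k |f|\rangle_Q\, \car_Q(x).$$
Thus it suffices to prove, for each $k\in\{0,\dots,m\}$ and each sparse family $\mathcal{S}$, that $\mathcal{A}^{k}_{\mathcal{S},b}:\lpw{p}{\mu}(\z)\hookrightarrow\lpw{q}{\lambda}(\z)$.

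To do this I would dualize in the variable Lebesgue spaces, pairing against a test function $g$ with $\normadefpv{g\lambda^{-1}}{q'}\le 1$ and expanding
$$\int_\z \mathcal{A}^{k}_{\mathcal{S},b}f\cdot g\,dx = \sum_{Q\in\mathcal{S}}\langle |b-b_Q|^k |f|\rangle_Q \int_Q |b(x)-b_Q|^{m-k}|g(x)|\,dx.$$
The oscillation factors are then controlled by a John--Nirenberg type inequality adapted to $BMO^{\ex{\delta}}_\eta$ with $\eta\in A_\infty$: for an appropriate $s\ge 1$,
$$\Big(\frac{1}{|Q|}\int_Q |b-b_Q|^s\Big)^{1/s} \lesssim \normadefp{b}{BMO^{\ex{\delta}}_\eta}\, \normadefpv{\car_Q}{n/\delta}\,\frac{\eta(Q)}{|Q|}.$$
Applying this to both the $|b-b_Q|^k$ and $|b-b_Q|^{m-k}$ factors with $\eta=\nu^{1/m}$, and inserting H\"older's inequality to release $|f|$ and $|g|$, the two powers of the symbol add to $m$, producing a clean $\nu(Q)=(\mu/\lambda)(Q)$ factor together with $\normadefpv{\car_Q}{n/\delta}^{m}$.

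What remains is to bound the sparse bilinear fractional form
$$\sum_{Q\in\mathcal{S}}\normadefpv{\car_Q}{n/\delta}^{m}\,\nu(Q)\,\langle |f|\rangle_Q\, \langle |g|\rangle_Q$$
by $\normadefpv{f\mu}{p}\normadefpv{g\lambda^{-1}}{q'}$. Writing $\nu(Q)=\int_Q(\mu/\lambda)$ and using the $A_{\ex{p},\ex{q}}$ hypothesis on $(\mu,\lambda)$, this reduces to a two-weight sparse bound for a fractional integral type operator on variable Lebesgue spaces, accessible via the sparse domination of the fractional maximal in the variable setting (cf.\ \cite{AMPRR}). The main obstacle will be the bookkeeping of variable exponents when applying H\"older's inequality to decouple $|f|$ from $|b-b_Q|^k$ and $|g|$ from $|b-b_Q|^{m-k}$: the auxiliary exponents must be simultaneously compatible with $p^-,p^+,q^-,q^+$ and with the reverse H\"older exponent associated with $\nu^{1/m}\in A_\infty$, and the constants produced by the variable John--Nirenberg estimate must be uniform in $Q$. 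Granted this, the conclusion follows by summing over the finitely many sparse families supplied by the pointwise domination.
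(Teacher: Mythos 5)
Your proposal follows the paper up through the sparse pointwise domination of $T^m_b$ and the dualization, but then diverges in a way that leaves a genuine gap. Where you propose to control the oscillation factors $|b-b_Q|^k$ by a John--Nirenberg type inequality for $BMO^{\ex{\delta}}_\eta$, the paper deliberately avoids this route. It instead uses Lemma~\ref{LORR Lemma 5.1.} (from \cite{LOR}): there is an enlarged sparse family $\tilde{\mathcal S}$ so that $|b(x)-b_Q|\lesssim\sum_{R\in\tilde{\mathcal S},\,R\subseteq Q}\langle |b-b_R|\rangle_R\,\car_R(x)$ pointwise. Feeding the $BMO^{\ex{\delta}}_\eta$ condition into each local average $\langle |b-b_R|\rangle_R$ and iterating produces Proposition~\ref{integral de b por f menor q la integral del operador sparse}: the quantity $\int_Q|b-b_Q|^k|f|$ is controlled by $\normadefp{b}{BMO^{\ex{\delta}}_\eta}^k\normadefpv{\ca}{n/\delta}^k\int_Q(\mathcal A_{\tilde{\mathcal S}})^k_\eta f$, i.e.\ by an \emph{iterate} of the $\eta$-twisted sparse operator, not by a number $\normadefpv{\ca}{n/\delta}^k(\eta(Q)/|Q|)^k$. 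The iterated-operator structure is then essential: after introducing $\mathcal{I}^{\ex{\beta}}_{\tilde{\mathcal S}}$ with $\ex{\beta}=n/m\ex{\delta}$, the proof repeatedly uses the self-adjointness of $\mathcal A_{\tilde{\mathcal S}}$ together with \eqref{op sparse con norma q y pesos }, \eqref{op sparse con norma p y pesos }, Lemma~\ref{factores en Apq implica factores en Apq} and Proposition~\ref{sparse fraccionario en lp variable con peso variable} to peel off the $\eta$-factors one at a time, landing on $\normadeflpw{f}{p}{\mu}$.

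Concretely, here are the gaps in your route. First, the John--Nirenberg inequality you invoke, $\bigl(\langle|b-b_Q|^s\rangle_Q\bigr)^{1/s}\lesssim\normadefp{b}{BMO^{\ex{\delta}}_\eta}\normadefpv{\ca}{n/\delta}\,\eta(Q)/|Q|$, is not a standard fact for weighted BMO with $\eta\in A_\infty$, and the paper never proves or cites such an estimate; getting it uniformly in $Q$, at the exponents $ks'$ and $(m-k)t'$ that your H\"older step would require, is exactly the kind of difficulty that motivated the sparse-domination route in \cite{LOR,LORR}. Second, even granting such an inequality, the product of the two oscillation bounds yields $(\eta(Q)/|Q|)^m=\bigl(\nu^{1/m}(Q)/|Q|\bigr)^m$, which is only $\lesssim\nu(Q)/|Q|$ by Jensen; you assert a ``clean $\nu(Q)$ factor'' without noting this step, and more importantly the residual averages on $f$ and $g$ are bumped averages $\langle|f|^s\rangle_Q^{1/s}$, $\langle|g|^t\rangle_Q^{1/t}$ rather than plain ones, so the object you end up with is not the bilinear form you write down. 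Third, the final two-weight fractional sparse bound in the variable setting is not ``accessible via \cite{AMPRR}''; the paper has to build it (Proposition~\ref{sparse fraccionario en lp variable con peso variable}, via the openness Proposition~\ref{apertura Ap,q} and Theorems~\ref{teo logl}, \ref{teo max llogl}), and this is a substantial part of the argument. In short, the overall scaffold (sparse domination, duality, reduce to a weighted sparse form) matches, but the heart of the paper's proof — converting oscillations into iterates of $(\mathcal A_{\tilde{\mathcal S}})_\eta$ and unwinding them by self-adjointness — is replaced in your sketch by an unproved weighted John--Nirenberg inequality and an unjustified final bound.
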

	
	When $p=q$ are constants, Theorem \ref{teo simb lips bloom} was proved in \cite{LOR} for the first order commutator and in \cite{LORR} for higher order.
	
	A similar result in the spirit of theorem above for the higher order commutator of the fractional integral operator is given by the following theorem.
	Recall first that the fractional integral operator is defined, for $0<\alpha<n$, by 
	\vspace{-0.2cm}
	
		$$I_\alpha f(x)=\int_{\z}\frac{f(y)}{|x-y|^{n-\alpha}}\,dy, \esp x\in\z.$$
	
	\begin{teo}\label{conmutador integral fraccionaria en lpv con simb bloom}
		Let $0<\alpha<n$ and $\ex{p}, \ex{q}\inplogdern$ such that $1<p^-\le\ex{p}<\ex{q}\le q^+<\infty$. 
		Let $\ex{\delta}$ be the exponent defined by 
		\vspace{-0.2cm}
		
			$$\frac{m\ex{\delta}+\alpha}{n}= \frac{1}{\ex{p}}-\frac{1}{\ex{q}}.$$ 
		Let $\mu,\lambda\in A_{\ex{p},\ex{q}}$ and $\nu={\mu}/{\lambda}$. 
		Assume that $b\in BMO^{\ex{\delta}}_{\nu^{1/m}}$, then 
		\vspace{-0.2cm}
		
			$$(I_\alpha)^m_b:\lpw{p}{\mu}(\z)\hookrightarrow\lpw{q}{\lambda}(\z).$$
	\end{teo}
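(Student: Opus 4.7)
The plan is to combine a sparse domination for $(I_\alpha)^m_b$ with duality in the weighted variable Lebesgue spaces. As a first step, I would invoke the pointwise sparse bound
\begin{equation*}
|(I_\alpha)^m_b f(x)| \lesssim \sum_{k=0}^m \sum_{Q\in\mathcal{S}} |b(x)-b_Q|^{m-k}\, |Q|^{\alpha/n}\left(\frac{1}{|Q|}\int_Q |b-b_Q|^{k}|f|\right)\car_Q(x),
\end{equation*}
where $\mathcal{S}$ is a sparse family of cubes depending on $f$. This is the natural fractional counterpart of the sparse dominations employed in the preceding theorems of the paper, and reduces the task to bounding each of the $m+1$ ``mixed'' sparse operators from $\lpw{p}{\mu}(\z)$ into $\lpw{q}{\lambda}(\z)$.

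Fix $k\in\{0,1,\ldots,m\}$ and test the corresponding piece against $g\in L^{q'(\cdot)}_{\lambda^{-1}}(\z)$ of unit norm. The sparseness of $\mathcal{S}$ (pairwise disjoint $E_Q\subset Q$ with $|E_Q|\gtrsim|Q|$) together with duality rewrites the estimate as a bound for the bilinear form
\begin{equation*}
\sum_{Q\in\mathcal{S}} |Q|^{\alpha/n+1}\left(\frac{1}{|Q|}\int_Q|b-b_Q|^{k}|f|\right)\left(\frac{1}{|Q|}\int_Q|b-b_Q|^{m-k}|g|\right).
\end{equation*}
A generalized H\"older inequality of Orlicz type inside each cube separates the $|b-b_Q|$ factors from $|f|$ and $|g|$; applying the Bloom hypothesis $b\in BMO^{\ex{\delta}}_{\nu^{1/m}}$ to each group of factors produces, in total, a multiplicative weight proportional to $\normadefpv{\car_Q}{n/\delta}^{m}\,\nu(Q)$. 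Since $\nu=\mu/\lambda$, the factor $\nu(Q)$ splits as $\nu(Q)^{k/m}\cdot\nu(Q)^{(m-k)/m}$, which is exactly what is needed to absorb the weights on the $f$--side and on the $g$--side in the form dictated by the two--weight class $\mu,\lambda\in A_{\ex{p},\ex{q}}$.

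Combining $|Q|^{\alpha/n}\normadefpv{\car_Q}{n/\delta}^{m}$ with the sparse sum and replacing the remaining cube averages by Hardy--Littlewood and appropriate Orlicz maximal operators acting on $f\mu$ and $g\lambda^{-1}$, the problem reduces to the boundedness of a variable fractional maximal operator of total order $m\ex{\delta}+\alpha$, whose exponent is precisely tuned by the relation $\tfrac{m\ex{\delta}+\alpha}{n}=\tfrac{1}{\ex{p}}-\tfrac{1}{\ex{q}}$ to carry $\lpw{p}{\mu}(\z)$ into $\lpw{q}{\lambda}(\z)$ under the hypothesis $\mu,\lambda\in A_{\ex{p},\ex{q}}$. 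The main obstacle is the careful Orlicz bookkeeping in the variable setting: one must choose the Young functions produced by the repeated generalized H\"older inequalities so that the associated maximal operators are bounded on $L^{\ex{p}}(\z)$ and $L^{\ex{p'}}(\z)$, which is where the log--H\"older regularity of $\ex{p},\ex{q}$ and the $G\Phi$--function machinery from the preceding sections become essential.
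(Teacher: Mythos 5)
Your outline agrees with the paper's strategy only at the very first step: sparse domination (via Theorem~\ref{est puntual del conmutador de la int fracc de orden m}) and dualization to a bilinear form over the sparse family. After that your argument diverges and contains a genuine gap. You propose to ``separate the $|b-b_Q|$ factors from $|f|$ and $|g|$'' via a generalized H\"older inequality and then apply the Bloom hypothesis $b\in BMO^{\ex{\delta}}_{\nu^{1/m}}$ to produce a factor $\normadefpv{\car_Q}{n/\delta}^m\,\nu(Q)$. This step does not work. The hypothesis $b\in BMO^{\ex{\delta}}_{\eta}$ controls only the first-order oscillation $\frac{1}{\normadefpv{\car_Q}{n/\delta}\eta(Q)}\int_Q|b-b_Q|$; it does not control higher powers $\frac{1}{|Q|}\int_Q|b-b_Q|^k$ or Orlicz norms of $|b-b_Q|^k$ on $Q$. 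For unweighted BMO (or for the unweighted generalized Lipschitz spaces $\mathcal{L}_a$, as in Lemma~\ref{Izuki con lipschitz general}) one does have a John--Nirenberg self-improvement that makes such a separation possible, and this is indeed what the paper does in Theorems~\ref{conmutador simbolo lip general} and~\ref{conmutador simbolo lip variable}. But for \emph{weighted} BMO there is no analogous John--Nirenberg estimate, and this is precisely the obstruction that the Bloom literature works around. The paper resolves it with Proposition~\ref{integral de b por f menor q la integral del operador sparse}, which is built on Lemma~\ref{LORR Lemma 5.1.}: the pointwise decomposition $|b(x)-b_Q|\lesssim\sum_{R\subset Q}\frac{1}{|R|}\int_R|b-b_R|\,\car_R(x)$ over an enlarged sparse family. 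After applying the $BMO^{\ex{\delta}}_\eta$ hypothesis cube by cube, the factor $|b-b_Q|^k$ is replaced not by a norm but by the $k$-times iterated weighted sparse operator $(\mathcal{A}_{\tilde{\mathcal{S}}})^k_\eta$. That iterated structure is what allows the weight $\nu$ to appear in the correct fashion.

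The second half of your plan is also too coarse to close. Reducing to ``a variable fractional maximal operator of total order $m\ex{\delta}+\alpha$'' acting on $f\mu$ and $g\lambda^{-1}$ does not see the fine weight-shuffling that the iteration demands. In the paper, after Proposition~\ref{integral de b por f menor q la integral del operador sparse}, the recombination $|Q|^{\alpha/n}\normadefpv{\car_Q}{n/\delta}^m\simeq\normadefpv{\car_Q}{\beta}$ with $\ex{\beta}=n/(m\ex{\delta}+\alpha)$ produces the fractional sparse operator $\mathcal{I}^{\ex{\beta}}_{\tilde{\mathcal{S}}}$ \emph{sandwiched} between iterated operators $(\mathcal{A}_{\tilde{\mathcal{S}}})^h_\eta$ and $(\mathcal{A}_{\tilde{\mathcal{S}}})^{m-h}_\eta$. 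The bound then proceeds by self-adjointness of $\mathcal{A}_{\tilde{\mathcal{S}}}$, repeated use of the weighted boundedness \eqref{sparse en lp variable con peso variable}, the intermediate-weight inequalities \eqref{op sparse con norma q y pesos } and \eqref{op sparse con norma p y pesos }, Lemma~\ref{factores en Apq implica factores en Apq} (so that $\lambda\nu^{(m-h)/m}\in A_{\ex{p},\ex{q}}$), and Proposition~\ref{sparse fraccionario en lp variable con peso variable} for the single fractional step. Your sketch also omits the a priori reduction to $b\in\lplocc{m}$, which the paper justifies at the end of the proof. In short: your high-level framing is reasonable, but the central mechanism of the Bloom argument — replacing oscillations of $b$ by iterated weighted sparse operators rather than by Orlicz/John--Nirenberg bounds — is missing, and without it the proof does not go through.
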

	
	When $p,q$ are constants and $1/{p}-1/{q}=\alpha /n$, the result above was proved in \cite{HRS} for the first order commutator and in \cite{AMPRR} for higher order.

\section{Preliminaries}
	In order to prove our results we give some preliminaries definitions and technical lemmas.
	
\subsection{Sparse operators}\label{sparse}
	We now introduce the dyadic structures we will working with. These definitions and a profound treatise on dyadic calculus can be
	found in \cite{LN}.
	
	We say that a collection of cubes $\mathscr{D}$ in $\z$ is a dyadic grid if it satisfies the following properties:
	\begin{enumerate}
		\item If $Q\in \mathscr{D}$, then $\ell(Q) = 2^k$ for some $k\in\zZ$.
		\item If $P,Q\in\mathscr{D}$, then $P\cap Q\in\{P,Q,\emptyset\}$.
		\item For every $k\in\zZ$, the cubes $\mathscr{D}_k =\{Q \in \mathscr{D} : \ell(Q) = 2^k\}$ form a partition of $\z$.	
	\end{enumerate}
	
	Given a dyadic grid $\mathscr{D}$, a set $\mathcal{S}\subset \mathscr{D}$ is sparse if for every $Q\in \mathcal{S}$,
	\vspace{-0.2cm}
	
	$$\left|\bigcup_{\overset{P\in \mathcal{S}}{P\subsetneq Q}}P\right|\leq \frac{1}{2} |Q|.$$
	Equivalently, if we define
	\begin{equation}\label{cubos E(Q)}
	E(Q)=Q\setminus \bigcup_{\overset{P\in \mathcal{S}}{P\subsetneq Q}}P,
	\end{equation}
	then the sets $E(Q)$ are pairwise disjoint and $|E(Q)|\geq\frac{ 1}{2}|Q|$.
	
	The classic example of a dyadic grid and sparse family are the standard dyadic grid on $\z$ and the Calderón-Zygmund cubes associated with an $L^1$ function.
	
	
	\bigskip
	The following results establish pointwise sparse domination for higher order commutators of $T\in \omega$-CZO and the fractional integral operator $I_\alpha$.
	For simplicity we introduce the following notation.
	Let $m,h$ be two integers, $0\le\alpha<n$ and $\mathcal{S}$ be a sparse family, we denote $\mathcal{A}^{m,h}_{\mathcal{S},\alpha}$ the fractional sparse operator given by	
	\vspace{-0.4cm}
	
	\begin{align*}
	\mathcal{A}^{m,h}_{\mathcal{S},\alpha}(b,f)(x)
	&=\sum_{Q\in \mathcal{S}}
	|b(x)-b_{Q}|^{m-h}
	|Q|^{\alpha/n}	
	|(b-b_{Q})^{h}f|_{Q}\cdot\ca(x), \esp b,f\in\lplocc{1}.
	\end{align*}
	When $\alpha=0$ we denote $\mathcal{A}^{m,h}_{\mathcal{S},0}=\mathcal{A}^{m,h}_{\mathcal{S}}$ and, if $m=h=\alpha=0$ we denote $\mathcal{A}^{0,0}_{\mathcal{S},0}(b,f)=\mathcal{A}_{\mathcal{S}}(f)$.
	
	\begin{teo}[\cite{IFRR}]\label{est puntual del conmutador de orden m}
		Let $T\in \omega$-CZO. 
		For every bounded function $f$ with compact support and $b\in\lplocc{1}$, 
		there exist $3^n$ 
		sparse families $\mathcal{S}_j$ 
		such that 
		\vspace{-0.2cm}
		
		\begin{equation*}\label{desigualdad puntual}
		|T^m_bf(x)|
		\leq C(m,n,T) \sum^{3^n}_{j=1}
		\sum_{h=0}^{m} \mathcal{A}_{{\mathcal{S}_j}}^{m,h}(b,f)(x),\esp a.e.\,\,x\in\z.
		\end{equation*}
	\end{teo}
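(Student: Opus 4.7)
The plan is to iterate the commutator identity
$$
T^m_b f(x) = \sum_{h=0}^{m} (-1)^h \binom{m}{h}\,(b(x) - c)^{m-h}\,T\bigl((b-c)^h f\bigr)(x), \quad c\in\zR,
$$
choosing $c=b_Q$ on each cube $Q$ of a principal family, and then to invoke the known pointwise sparse domination for $T\in\omega$-CZO as a black box. This expansion produces exactly the structure $\mathcal{A}^{m,h}_{\mathcal{S},0}$ on the right-hand side of the theorem.

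The base case is Lerner's $m=0$ pointwise sparse domination: for $T\in\omega$-CZO with $\int_0^1\omega(t)\,dt/t<\infty$ there exist $3^n$ sparse families $\mathcal{S}_j$, one in each of the shifted standard dyadic grids, with $|Tf(x)|\lesssim\sum_j\sum_{Q\in\mathcal{S}_j}|f|_Q\car_Q(x)$. The standard proof uses the three-lattice theorem together with a stopping-time argument based on the weak-$L^1$ bound for the grand maximal truncation operator of $T$, for which the Dini condition is essential.

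I would then construct a principal family tailored to $b$, $m$ and $f$. After reducing to $f$ supported in a large dyadic cube $Q_0$ and expanding $T^m_b(f\car_{Q_0})$ by the identity above with $c=b_{Q_0}$, set $\mathcal{F}_0=\{Q_0\}$ and, given $\mathcal{F}_k$, for each $P\in\mathcal{F}_k$ declare $\text{ch}(P)$ to be the maximal dyadic subcubes $Q\subsetneq P$ for which either $|f|_Q>C|f|_P$ or $|(b-b_P)^h f|_Q>C\,|(b-b_P)^h f|_P$ for some $0\le h\le m$. Applying the weak $(1,1)$ property of the dyadic maximal operator $m+2$ times with $C=C(m,n)$ large gives $\sum_{Q\in\text{ch}(P)}|Q|\le \tfrac{1}{2}|P|$, so $\mathcal{F}=\bigcup_k\mathcal{F}_k$ is sparse. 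On the set $E(P)$ defined in \eqref{cubos E(Q)}, the stopping rule ensures $|(b-b_P)^h f|_Q\lesssim|(b-b_P)^h f|_P$, and applying Lerner's $m=0$ bound to each $T\bigl((b-b_P)^h f\car_P\bigr)$ restricted to $E(P)$ yields the desired pointwise inequality after summing over $P\in\mathcal{F}$, over $0\le h\le m$, and finally over the $3^n$ shifted dyadic grids produced by the three-lattice theorem.

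The principal obstacle is the balancing in the stopping step: the stopping conditions must be rich enough that the averages $|(b-b_P)^h f|_Q$ produced by the inner sparse bound can be reabsorbed, for each $h$, into a sum indexed by $\mathcal{F}$ carrying the correct factor $|b(x)-b_P|^{m-h}$, yet restrictive enough for $\mathcal{F}$ to remain sparse. Because only Dini smoothness is assumed rather than H\"older, the inner sparse bound must be invoked with quantitative control of the constants; this is precisely where the hypothesis $\int_0^1\omega(t)\,dt/t<\infty$ enters.
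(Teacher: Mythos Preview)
The paper does not prove this statement: it is quoted from \cite{IFRR} (the case $m=1$ being \cite{LOR}) and used as a black box in the subsequent arguments. There is therefore no proof in the paper to compare your sketch against.

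That said, your outline follows the right strategy --- the commutator expansion with $c=b_Q$, a stopping-time construction, and the three-lattice reduction to $3^n$ grids is exactly how \cite{LOR} and \cite{IFRR} proceed --- but the stopping rule you wrote is missing the decisive ingredient. Stopping only on large averages $|(b-b_P)^h f|_Q$ gives no pointwise control of $T\bigl((b-b_P)^h f\,\car_{3P}\bigr)$ on $E(P)$. In the actual argument one must \emph{also} stop on the set where the grand maximal truncation $\mathcal{M}_T\bigl((b-b_P)^h f\,\car_{3P}\bigr)$ exceeds a fixed multiple of $|(b-b_P)^h f|_{3P}$; it is the weak-$(1,1)$ bound for $\mathcal{M}_T$ (and this is precisely where the Dini condition is used) that keeps this additional stopping set small enough to preserve sparseness. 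Without that, invoking the $m=0$ sparse bound as a black box on each $T\bigl((b-b_P)^h f\,\car_P\bigr)$ produces auxiliary sparse collections not subordinate to your $\mathcal{F}$, and the ``reabsorption'' you correctly flag as the principal obstacle genuinely fails for the conditions you wrote down. The remedy is to run the local recursion directly on the commutator with the enriched stopping criteria (averages \emph{and} $\mathcal{M}_T$-level sets, for every $0\le h\le m$), rather than calling the $m=0$ theorem inside each $P$.
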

	
	A more general version of the statement above was proved in \cite{IFRR}. 	
	
	\begin{teo}[\cite{AMPRR}]\label{est puntual del conmutador de la int fracc de orden m}
		Let $0 <\alpha < n$. 
		For every bounded function $f$ with compact support and $b\in\lplocc{m}$, there exist $3^n$ sparse families $\mathcal{S}_j$ such that
		\vspace{-0.2cm}
		
		\begin{equation*}\label{desigualdad puntual int fracc}
		|(I_\alpha)^m_bf(x)|
		\leq C(m,n,T) \sum^{3^n}_{j=1}\sum_{h=0}^{m} \mathcal{A}_{{\mathcal{S}_j,\alpha}}^{m,h}(b,f)(x),\esp a.e.\,\,x\in\z.
		\end{equation*}
	\end{teo}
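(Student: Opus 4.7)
The plan is to combine the pointwise sparse domination in Theorem \ref{est puntual del conmutador de la int fracc de orden m} with variable-Lebesgue duality and a weighted John--Nirenberg inequality for $BMO^{\delta(\cdot)}_{\nu^{1/m}}$. By that theorem, it suffices to prove that for each $0 \le h \le m$ and each sparse family $\mathcal{S}$,
\begin{equation*}
\|\lambda\, \mathcal{A}^{m,h}_{\mathcal{S},\alpha}(b,f)\|_{q(\cdot)} \lesssim \|b\|^m_{BMO^{\delta(\cdot)}_{\nu^{1/m}}}\,\|\mu f\|_{p(\cdot)}.
\end{equation*}
Using the norm-conjugate formula in variable Lebesgue spaces, this reduces to estimating the bilinear form $\int \lambda g\, \mathcal{A}^{m,h}_{\mathcal{S},\alpha}(b,f)$ for nonnegative $g$ with $\|g\|_{q'(\cdot)} \le 1$, and unfolding the definition of the sparse operator produces
\begin{equation*}
\sum_{Q\in\mathcal{S}} |Q|^{\alpha/n}\,\Big(\tfrac{1}{|Q|}\int_Q |b-b_Q|^h |f|\Big)\,\Big(\int_Q |b-b_Q|^{m-h} \lambda g\Big).
\end{equation*}

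The heart of the proof is the Bloom step. Since $b \in BMO^{\delta(\cdot)}_{\nu^{1/m}}$, a weighted John--Nirenberg / generalized Hölder inequality (the variable-exponent analogue of the tool used in \cite{HRS, AMPRR}) should yield
\begin{equation*}
\tfrac{1}{|Q|}\int_Q |b-b_Q|^k \psi \;\lesssim\; \|b\|^k_{BMO^{\delta(\cdot)}_{\nu^{1/m}}}\, \|\chi_Q\|^k_{n/\delta(\cdot)}\, \langle \nu^{k/m}\psi\rangle_{Q,\,s}
\end{equation*}
for $k\in\{h,m-h\}$ and $\psi\in\{|f|,\lambda g\}$, where $s>1$ is chosen so that a Hölder balance between both factors is preserved. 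Applying this twice per cube and using $\nu^{h/m}\cdot\nu^{(m-h)/m}=\nu=\mu/\lambda$ to recombine the weights, the bilinear sum is majorised by
\begin{equation*}
\|b\|^m_{BMO^{\delta(\cdot)}_{\nu^{1/m}}}\,\sum_{Q\in\mathcal{S}} \|\chi_Q\|^m_{n/\delta(\cdot)}\,|Q|^{\alpha/n}\,\langle \mu f\rangle_{Q,s_1}\,\langle g\rangle_{Q,s_2}\,|Q|.
\end{equation*}

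The log-Hölder regularity of $\delta(\cdot)$ gives $\|\chi_Q\|_{n/\delta(\cdot)}$ behaving like $|Q|^{\delta(x_Q)/n}$, so the factor $\|\chi_Q\|^m_{n/\delta(\cdot)}|Q|^{\alpha/n}/|Q|$ has the scaling $|Q|^{1/p(\cdot)-1/q(\cdot)-1}$ dictated by the relation $m\delta(\cdot)/n+\alpha/n=1/p(\cdot)-1/q(\cdot)$. Invoking the sparse disjointness $|E(Q)|\ge |Q|/2$ from \eqref{cubos E(Q)}, the $\mathcal{S}$-sum collapses into a bilinear fractional sparse form of index $(p(\cdot),q(\cdot))$ applied to $\mu f$ and $g$, and the hypothesis $\mu,\lambda\in A_{p(\cdot),q(\cdot)}$ together with the known two-weight boundedness of such fractional sparse operators on variable Lebesgue spaces closes the estimate.

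The main obstacle is the weighted John--Nirenberg / generalized Hölder step: the classical Bloom argument requires a $\nu^{1/m}$-weighted exponential integrability of $b-b_Q$, and one must carry this over to the variable-exponent Lipschitz-type setting $BMO^{\delta(\cdot)}_{\nu^{1/m}}$, while matching the auxiliary exponents $s, s_1, s_2$ with the log-Hölder structure of $\delta(\cdot)$ and with the requirements for the final two-weight fractional sparse bound under $A_{p(\cdot),q(\cdot)}$. Once this quantitative $BMO$-tool is in place, the rest of the argument follows standard sparse/duality methodology, and the case $h=0$ or $h=m$ is absorbed uniformly since the corresponding factor $|b-b_Q|^0=1$ makes the Bloom step trivial on that side.
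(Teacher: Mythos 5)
There is a fundamental mismatch here: what you have written is not a proof of the statement in question. The statement (Theorem \ref{est puntual del conmutador de la int fracc de orden m}) is the \emph{pointwise} sparse domination
$|(I_\alpha)^m_b f(x)| \lesssim \sum_{j}\sum_{h} \mathcal{A}^{m,h}_{\mathcal{S}_j,\alpha}(b,f)(x)$ a.e., valid for any $b\in\lplocc{m}$ and any bounded compactly supported $f$, with no weights, no exponents $\ex{p},\ex{q}$, and no $BMO$-type hypothesis on $b$ whatsoever. Your proposal instead takes this pointwise domination as a given tool (``By that theorem, it suffices to prove\dots'') and then sketches the weighted norm inequality $\|\lambda\,\mathcal{A}^{m,h}_{\mathcal{S},\alpha}(b,f)\|_{q(\cdot)}\lesssim\|\mu f\|_{p(\cdot)}$ under $b\in BMO^{\ex{\delta}}_{\nu^{1/m}}$ and $\mu,\lambda\in A_{\ex{p},\ex{q}}$. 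That is (a sketch of) the proof of Theorem \ref{conmutador integral fraccionaria en lpv con simb bloom}, not of the sparse domination theorem; as an argument for the latter it is circular, since the very inequality to be established is invoked in the first line.

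Proving the actual statement requires a different kind of argument, of the type developed in \cite{AMPRR} and \cite{IFRR} (the paper itself only cites it): one expands $(I_\alpha)^m_b f(x)=\sum_{h=0}^m\binom{m}{h}(-1)^h\,(b(x)-c)^{m-h} I_\alpha\bigl((b-c)^h f\bigr)(x)$ with $c=b_Q$ on each stopping cube, covers $\z$ by $3^n$ dyadic lattices, and runs a local Calderón--Zygmund/stopping-time construction controlled by a grand maximal truncated operator associated to $I_\alpha$ (whose weak $(1,1)$-type bounds supply the selection of the sparse cubes and the a.e.\ control of the error terms); iterating this construction produces the sparse families $\mathcal{S}_j$ and the pointwise bound with the terms $|b(x)-b_Q|^{m-h}|Q|^{\alpha/n}|(b-b_Q)^h f|_Q\car_Q(x)$. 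None of the ingredients in your sketch (duality in $\lp{q}$, weighted John--Nirenberg for $BMO^{\ex{\delta}}_{\nu^{1/m}}$, $A_{\ex{p},\ex{q}}$ classes, boundedness of fractional sparse forms) is relevant to, or can substitute for, this purely local, unweighted stopping-time argument. If your goal was the Bloom-type estimate, note also that the paper's own route to it does not use a weighted John--Nirenberg inequality: it uses Proposition \ref{integral de b por f menor q la integral del operador sparse} (domination of $\int_Q|b-b_Q|^k|f|$ by iterated operators $(\mathcal{A}_{\tilde{\mathcal{S}}})^k_\eta$ via Lemma \ref{LORR Lemma 5.1.}), self-adjointness of $\mathcal{A}_{\tilde{\mathcal{S}}}$, Lemma \ref{factores en Apq implica factores en Apq}, and Proposition \ref{sparse fraccionario en lp variable con peso variable}.
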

	
	We shall use the following result in the proof of Proposition \ref{integral de b por f menor q la integral del operador sparse}.
	\begin{lema}[\cite{LOR}]\label{LORR Lemma 5.1.}
		Let $\mathscr{D}$ be a dyadic grid and let $\mathcal{S}\subset\mathscr{D}$ be a sparse family. 
		Assume that $b\in\lplocc{1}$. 
		Then there exists a sparse family $\tilde{\mathcal{S}}\subset\mathscr{D}$ such that $\mathcal{S}\subset\tilde{\mathcal{S}}$ and for every cube $Q\in\tilde{\mathcal{S}}$,
		\vspace{-0.2cm}
		
		$$|b(x)-b_Q|\lesssim \sum_{\overset{R\in \tilde{\mathcal{S}}}{R\subseteq Q}}\frac{1}{|R|}\int_R|b(y)-b_R|\,dy\cdot\car_R(x),\esp a.e.\,\,x\in Q.$$
	\end{lema}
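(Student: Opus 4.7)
The plan is to enlarge $\mathcal{S}$ by adjoining Calder\'on--Zygmund stopping cubes for the functions $b-b_Q$, constructed recursively so that every cube of $\mathcal{S}$ comes with a record of the ``bad'' subcubes where $b$ departs significantly from its average. Precisely, fix a large constant $K$, and for every $Q\in\mathcal{S}$ let $\mathrm{ch}(Q)$ consist of the maximal dyadic cubes $P\subsetneq Q$ such that either $P\in\mathcal{S}$ or
$$\frac{1}{|P|}\int_P|b(y)-b_Q|\,dy>\frac{K}{|Q|}\int_Q|b(y)-b_Q|\,dy.$$
Iterate: for each $P\in\mathrm{ch}(Q)$ form $\mathrm{ch}(P)$ by the same rule with $b_P$ in place of $b_Q$, and continue. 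Define $\tilde{\mathcal{S}}$ to be the union of $\mathcal{S}$ with every cube produced by this iteration, so $\mathcal{S}\subset\tilde{\mathcal{S}}$ by construction.

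To verify sparseness of $\tilde{\mathcal{S}}$, observe that for each $Q\in\tilde{\mathcal{S}}$ the maximal strictly-smaller elements of $\tilde{\mathcal{S}}$ inside $Q$ are exactly the cubes in $\mathrm{ch}(Q)$. Among these, the subcollection lying in $\mathcal{S}$ is bounded in measure by $|Q|/2$ thanks to the sparseness of $\mathcal{S}$, while the rest (those satisfying the stopping condition strictly) contribute at most $|Q|/K$ by the weak-type $(1,1)$ inequality for the dyadic maximal operator applied to $(b-b_Q)\car_Q$. Taking $K$ large gives $|\bigcup_{P\in\mathrm{ch}(Q)}P|\le(1-\eta)|Q|$ for some $\eta>0$, which makes $\tilde{\mathcal{S}}$ sparse (after splitting into finitely many subfamilies if one insists on the precise factor $1/2$).

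For the pointwise estimate, fix $Q\in\tilde{\mathcal{S}}$ and $x\in Q$. Form the chain $Q=Q_0\supsetneq Q_1\supsetneq\cdots$ of cubes in $\tilde{\mathcal{S}}$ containing $x$, where $Q_{i+1}$ is the unique element of $\mathrm{ch}(Q_i)$ containing $x$. If the chain terminates at some $Q_N$ (i.e.\ $x$ lies in no element of $\mathrm{ch}(Q_N)$), then for every dyadic $P\ni x$ with $P\subsetneq Q_N$ the maximality of the stopping rule forces $P\notin\mathcal{S}$ and $\frac{1}{|P|}\int_P|b-b_{Q_N}|\le\frac{K}{|Q_N|}\int_{Q_N}|b-b_{Q_N}|$; by Lebesgue differentiation this yields $|b(x)-b_{Q_N}|\lesssim\frac{1}{|Q_N|}\int_{Q_N}|b-b_{Q_N}|$. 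For each intermediate jump, the dyadic parent of $Q_{i+1}$ is not a stopping cube (by maximality), so dyadic doubling together with failure of the stopping condition on the parent gives $|b_{Q_{i+1}}-b_{Q_i}|\le\frac{1}{|Q_{i+1}|}\int_{Q_{i+1}}|b-b_{Q_i}|\lesssim\frac{1}{|Q_i|}\int_{Q_i}|b-b_{Q_i}|$. Telescoping through the chain and reindexing with $R=Q_i$ produces
$$|b(x)-b_Q|\lesssim\sum_{i=0}^{N}\frac{1}{|Q_i|}\int_{Q_i}|b-b_{Q_i}|\,dy\le\sum_{\substack{R\in\tilde{\mathcal{S}}\\ R\subseteq Q}}\frac{1}{|R|}\int_R|b(y)-b_R|\,dy\,\car_R(x),$$
as claimed; the infinite-chain case is handled identically using $b_{Q_i}\to b(x)$ to close the telescope.

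The delicate point is the calibration of $K$: it must be large enough for the sparseness of the merged family $\tilde{\mathcal{S}}$ to survive, yet the telescoping constants $K,\,2^nK,\ldots$ produced in the pointwise bound must remain uniform in the number of generations, which is the only place where the constant $K$ enters the final estimate.
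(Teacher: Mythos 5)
The pointwise telescoping argument is essentially sound, but the sparseness argument for $\tilde{\mathcal{S}}$ has a genuine gap, and in fact the construction you describe can fail to produce a sparse family.

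\smallskip

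The problem is in the sentence ``the subcollection lying in $\mathcal{S}$ is bounded in measure by $|Q|/2$ thanks to the sparseness of $\mathcal{S}$.'' The sparseness hypothesis on $\mathcal{S}$ gives $\bigl|\bigcup_{P\in\mathcal{S},\,P\subsetneq Q}P\bigr|\le\frac12|Q|$ \emph{only when $Q\in\mathcal{S}$}. For a cube $Q\in\tilde{\mathcal{S}}\setminus\mathcal{S}$ added as a Calder\'on--Zygmund stopping cube, there is no such bound: the maximal $\mathcal{S}$-cubes strictly inside $Q$ can fill essentially all of $Q$. Concretely, take $\hat{Q}\in\mathcal{S}$, let $Q$ be a CZ stopping cube of $(b-b_{\hat Q})\chi_{\hat Q}$ (so $|Q|<|\hat Q|/K$), and suppose the $2^n$ dyadic children $P_1,\dots,P_{2^n}$ of $Q$ all belong to $\mathcal{S}$. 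Then $\bigl|\bigcup_i P_i\bigr|=|Q|<|\hat Q|/K<|\hat Q|/2$, so $\mathcal{S}=\{\hat Q,P_1,\dots,P_{2^n}\}$ is perfectly sparse; but $Q\in\tilde{\mathcal{S}}$ and $\bigl|\bigcup_{P\in\tilde{\mathcal{S}},\,P\subsetneq Q}P\bigr|\ge\bigl|\bigcup_iP_i\bigr|=|Q|$, so $E(Q)$ is empty and $\tilde{\mathcal{S}}$ is not $\gamma$-sparse for any $\gamma>0$. Calibrating $K$ larger makes things worse, not better, since it only forces the CZ cubes $Q$ to be smaller relative to $\hat Q$, leaving more room for $\mathcal{S}$-cubes to saturate them. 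There is a second, related issue: the assertion that ``the maximal strictly-smaller elements of $\tilde{\mathcal{S}}$ inside $Q$ are exactly the cubes in $\mathrm{ch}(Q)$'' is unjustified, because a CZ stopping cube added for one ancestor $Q''\supsetneq Q$ need not satisfy the stopping condition relative to $Q$, and thus need not sit inside any element of $\mathrm{ch}(Q)$; the $\mathrm{ch}$-iteration does not a priori organize $\tilde{\mathcal{S}}$ into a forest.

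\smallskip

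The correct construction (as in \cite{LOR}) must interleave the two stopping times so that CZ stopping cubes never strictly contain $\mathcal{S}$-cubes; one way is to restrict the CZ decomposition at each stage to dyadic cubes disjoint from the maximal $\mathcal{S}$-cubes of the current generation, so that the added cubes carry no $\mathcal{S}$-mass and the sparseness constants of the $\mathcal{S}$-part and the CZ-part can be added safely. This requires extra care in the telescoping step precisely where the chain jumps across an $\mathcal{S}$-cube that is not a CZ stopping cube, and handling that jump is the real content of the lemma. Your sketch of the pointwise bound (using maximality of the parent, dyadic doubling, and Lebesgue differentiation to close the chain) is correct once $\tilde{\mathcal{S}}$ has been built properly, but the sparseness of $\tilde{\mathcal{S}}$ does not follow from the argument given, and with your construction it is simply false in general.
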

		
\subsection{Generalized $\Phi$-functions}\label{GFF}
	With $\mathcal{M}$ we denote the set of all Lebesgue real valued, measurable functions on $\z$.
	
	A convex function $\psi:[0,\infty)\rightarrow[0,\infty)$ with
	$\psi(0) = 0$,
	$\lim_{t\rightarrow 0^+}\psi(t) = 0$ and $\lim_{t\rightarrow \infty}\psi(t) =\infty$ is called a $\Phi$-function.
	
	A real function $\Psi:\z\times[0,\infty)\rightarrow[0,\infty)$ is said to be a generalized $\Phi$-function (G$\Phi$-functon), and we denote $\Psi\in\Phi(\z)$,
	if $\Psi(x,t)$ is Lebesgue-measurable in $x$ for every $t\geq0$ and $\Psi(x,\cdot)$ is a $\Phi$-function
	for every $x\in\z$.
	
	If $\Psi\in \Phi(\z)$, then the set
	\vspace{-0.2cm}
	
	$$L^\Psi(\z)=\left\{ f\in\mathcal{M}: \int_{\z} \Psi\left(x,|f(x)|\right)\,dx<\infty\right\}$$
	defines a Banach function space equipped with the Luxemburg-norm given by
	\vspace{-0.2cm}
	
	$$\normadeflpl{f}{\Psi}=\inf\left\{ \lambda>0: \int_{\z} \Psi\left(x,\frac{|f(x)|}{\lambda}\right)\,dx\leq1\right\}.$$
	The space $L^\Psi(\z)$ is called a Musielak-Orlicz space.
	
	Let $\ex{p}\in\mathcal{P}(\z)$, then $\Psi(x,t)=t^{p(x)}\in \Phi(\z)$.
	In this case, the space $L^\Psi(\z)$ is the variable exponent Lebesgue space $L^{\ex{p}}(\z)$ defined in the introduction.
	
	Let $\Psi\in \Phi(\z)$, then for any $x\in\z$ we denote by $\Psi^*(x,\cdot)$ the conjugate function of $\Psi(x,\cdot)$ which is defined by
	\vspace{-0.2cm}
	
	$$\Psi^*(x,u)=\sup_{t\geq0}\,(tu-\Psi(x,t)),\esp  u\geq 0.$$
	Also we can define $\Psi^{-1}$, the generalized inverse function of $\Psi$ by
	\vspace{-0.2cm}
	
	$$\Psi^{-1}(x,t)=\inf\{u\geq0: \Psi(x,u)\geq t\},\esp  x\in\z, t\geq 0.$$	
	
	The following result is a generalization of the classical H\"older inequality to the Musielak–Orlicz spaces.
	
	\begin{lema}
		Let $\Psi\in\Phi(\z)$, then
		\begin{equation}\label{holdermusie}
		\int_{\z}f(x)g(x)\,dx \lesssim\normadeflpl{f}{\Psi}\normadeflpl{g}{\Psi^*}
		\end{equation}
		for all $f\in L^\Psi(\z)$ and $g\in L^{\Psi^*}(\z)$.
	\end{lema}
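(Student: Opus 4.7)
The plan is to derive the inequality from the pointwise Young inequality for Musielak–Orlicz pairs, which is built into the very definition of $\Psi^*$. Specifically, for every $x\in\z$ and all $t,u\ge 0$, the definition $\Psi^*(x,u)=\sup_{s\ge 0}(su-\Psi(x,s))$ immediately gives
$$tu\le \Psi(x,t)+\Psi^*(x,u).$$
This is the only structural ingredient; the rest of the argument is a homogenization and integration step.

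First I would dispense with the trivial cases: if either $\|f\|_{\Psi(\cdot,L)}=0$ or $\|g\|_{\Psi^*(\cdot,L)}=0$ the respective function is a.e.\ zero and there is nothing to prove, while if either norm is infinite the right-hand side of \eqref{holdermusie} is infinite and the inequality is vacuous. Assuming both norms are finite and strictly positive, fix $\varepsilon>0$ and set $\lambda_1=\|f\|_{\Psi(\cdot,L)}+\varepsilon$ and $\lambda_2=\|g\|_{\Psi^*(\cdot,L)}+\varepsilon$. By the definition of the Luxemburg norm, both $\int_{\z}\Psi(x,|f(x)|/\lambda_1)\,dx\le 1$ and $\int_{\z}\Psi^*(x,|g(x)|/\lambda_2)\,dx\le 1$.

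Next I would apply the Young inequality pointwise to $t=|f(x)|/\lambda_1$ and $u=|g(x)|/\lambda_2$:
$$\frac{|f(x)g(x)|}{\lambda_1\lambda_2}\le \Psi\!\left(x,\frac{|f(x)|}{\lambda_1}\right)+\Psi^*\!\left(x,\frac{|g(x)|}{\lambda_2}\right).$$
Integrating over $\z$ and using the two modular estimates from the previous step yields
$$\int_{\z}|f(x)g(x)|\,dx\le 2\lambda_1\lambda_2=2\bigl(\|f\|_{\Psi(\cdot,L)}+\varepsilon\bigr)\bigl(\|g\|_{\Psi^*(\cdot,L)}+\varepsilon\bigr).$$
Letting $\varepsilon\to 0^+$ produces \eqref{holdermusie} with implicit constant $2$.

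The only mildly delicate point is the first application of the Luxemburg-norm inequality at $\lambda=\|\cdot\|+\varepsilon$ rather than at the infimum itself; this is why I would introduce the $\varepsilon$-slack, since in general one does not know a priori that the modular at $\lambda=\|f\|_{\Psi(\cdot,L)}$ is bounded by $1$ without an additional continuity or Fatou argument on $\Psi$. Working with $\lambda>\|f\|_{\Psi(\cdot,L)}$ bypasses this issue entirely and gives the bound uniformly.
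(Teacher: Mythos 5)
Your proof is correct and is the canonical argument: normalize by $\lambda_1,\lambda_2$ slightly larger than the Luxemburg norms, apply the pointwise Young inequality $tu\le\Psi(x,t)+\Psi^*(x,u)$, integrate, use the two modular bounds $\le 1$, and let $\varepsilon\to0^+$, giving constant $2$. The paper does not actually prove this lemma --- it states it as background (pointing to \cite{DHHR} and \cite{HH}) and records the Young inequality separately --- so there is no paper-specific proof to compare against; your derivation is exactly the standard one that those references use, and the $\varepsilon$-slack is the right way to avoid needing continuity of the modular at the infimum.
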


	For the definition of $\Psi^*$, the following generalization of the Young's inequality holds in this context,
	\vspace{-0.5cm}
	
	\begin{equation*}\label{young}
	tu\leq \Psi(\omega,t)+\Psi^*(\omega,u),\esp \omega\in\z,\,\,t,u\geq0
	\end{equation*}
	for any $\Psi\in\Phi(\z)$.
	Moreover, it can be proved that if $\Psi,\Lambda,\Theta\in \Phi(\z)$ and
	$\Psi^{-1}(x,t)\Lambda^{-1}(x,t)\leq\Theta^{-1}(x,t)$
	then
	\vspace{-0.5cm}
	
	$$\Theta(x,tu)\leq \Psi(x,t)+\Lambda(x,u).$$
	
	The inequality above allow us to prove the following generalized H\"older type inequality.
	
	\begin{lema}[\cite{MP2}]
		Let $\Psi,\Lambda,\Theta\in \Phi(\z)$ such that
		$\Psi^{-1}(x,t)\Lambda^{-1}(x,t)\leq\Theta^{-1}(x,t).$
		Then
			\begin{equation}\label{holdermusie2}
				\normadeflpl{fg}{\Theta} \lesssim \normadeflpl{f}{\Psi}\normadeflpl{g}{\Lambda}
			\end{equation}
		for all $f\in L^\Psi(\z)$ and $g\in L^{\Lambda}(\z)$.
	\end{lema}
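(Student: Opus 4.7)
The plan is to derive the estimate directly from the Young-type pointwise inequality
\[
\Theta(x,tu)\leq \Psi(x,t)+\Lambda(x,u),\qquad x\in\z,\ t,u\geq 0,
\]
which the excerpt has already extracted from the hypothesis $\Psi^{-1}(x,t)\Lambda^{-1}(x,t)\leq \Theta^{-1}(x,t)$. After this, the argument is a standard Luxemburg-norm normalisation, and the only subtle point is absorbing the additive factor of $2$ obtained on the right-hand side of the Young bound.

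First I would dispose of the trivial cases: if $\|f\|_{\Psi(\cdot,L)}=0$ or $\|g\|_{\Lambda(\cdot,L)}=0$ then $fg=0$ almost everywhere and the inequality is immediate; otherwise set $\lambda_1=\|f\|_{\Psi(\cdot,L)}$ and $\lambda_2=\|g\|_{\Lambda(\cdot,L)}$, which are finite and positive. By the definition of the Luxemburg norm,
\[
\int_{\z}\Psi\!\left(x,\frac{|f(x)|}{\lambda_1}\right)\,dx\leq 1\qquad\text{and}\qquad\int_{\z}\Lambda\!\left(x,\frac{|g(x)|}{\lambda_2}\right)\,dx\leq 1.
\]

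Next I would apply the Young-type inequality pointwise with $t=|f(x)|/\lambda_1$ and $u=|g(x)|/\lambda_2$, obtaining
\[
\Theta\!\left(x,\frac{|f(x)g(x)|}{\lambda_1\lambda_2}\right)\leq \Psi\!\left(x,\frac{|f(x)|}{\lambda_1}\right)+\Lambda\!\left(x,\frac{|g(x)|}{\lambda_2}\right).
\]
Integrating over $\z$ and combining with the two modular bounds above yields
\[
\int_{\z}\Theta\!\left(x,\frac{|f(x)g(x)|}{\lambda_1\lambda_2}\right)\,dx\leq 2.
\]

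Finally I would use convexity of $\Theta(x,\cdot)$ together with $\Theta(x,0)=0$, which gives $\Theta(x,s/2)\leq \tfrac{1}{2}\Theta(x,s)$ for every $s\geq 0$. Substituting $s=|f(x)g(x)|/(\lambda_1\lambda_2)$ and integrating produces
\[
\int_{\z}\Theta\!\left(x,\frac{|f(x)g(x)|}{2\lambda_1\lambda_2}\right)\,dx\leq 1,
\]
so by definition of the Luxemburg norm $\|fg\|_{\Theta(\cdot,L)}\leq 2\lambda_1\lambda_2=2\|f\|_{\Psi(\cdot,L)}\|g\|_{\Lambda(\cdot,L)}$, which is the asserted inequality with implicit constant $2$. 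The main (and really only) obstacle is this last convexity step, which is needed because the Young-type bound naturally produces the modular inequality with constant $2$ rather than $1$; convexity of each $\Theta(x,\cdot)$ handles it cleanly.
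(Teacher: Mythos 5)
Your argument is correct and is exactly the route the paper indicates: the lemma is only cited from \cite{MP2}, but the text introduces it precisely via the Young-type inequality $\Theta(x,tu)\leq\Psi(x,t)+\Lambda(x,u)$ deduced from the hypothesis on the inverses, followed by the standard Luxemburg normalisation and the convexity trick $\Theta(x,s/2)\leq\tfrac12\Theta(x,s)$ to absorb the modular constant $2$. The only point worth noting is the implicit use of left-continuity of the modular (or a $(1+\epsilon)$-normalisation) to justify that the modulars at $\lambda_1=\normadeflpl{f}{\Psi}$ and $\lambda_2=\normadeflpl{g}{\Lambda}$ are at most $1$, which is standard.
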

	See \cite{DHHR} and \cite{HH} for more information about generalized $\Phi$-functions.
	\medskip
\subsection{Variable Lebesgue spaces}\label{lpv}
	When we deal with variable Lebesgue spaces, we have the following known results that we shall be using along this paper.
		
	\begin{lema}[\cite{DHHR}]
		Let $\ex{s},\ex{p},\ex{q}\in\mathcal{P}(\z)$ be such that $1/\ex{s}=1/\ex{p}+1/\ex{q}$. Then
			\begin{equation}\label{holderrpq}
			\normadefpv{fg}{s}\lesssim\normadefpv{f}{p}\normadefpv{g}{q}.
			\end{equation}
		Particularly, if $\ex{s}\equiv1$, the inequality above gives
			\begin{equation}\label{holderpp}
			\int_{\z}|f(y)g(y)|\,dy\lesssim
			\normadefpv{f}{p}\normadefpv{g}{p'}
			\end{equation}
		which is an extension of the classical H\"{o}lder inequality.		
	\end{lema}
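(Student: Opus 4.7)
The plan is to follow the standard route for extending Hölder's inequality to variable exponent Lebesgue spaces, namely: reduce by homogeneity of the Luxemburg norm, apply Young's inequality pointwise with variable exponents, and then control the resulting modular. I would begin by noting that the inequality is homogeneous in $f$ and $g$, so (modulo handling degeneracies) one may normalize and assume $\normadefpv{f}{p}=\normadefpv{g}{q}=1$. The target is then to produce a constant $C=C(s^-,p^-,q^-)$ such that $\int_{\z}\varphi_{\ex{s}}(x,|f(x)g(x)|/C)\,dx\le 1$, which by definition of the Luxemburg norm gives $\normadefpv{fg}{s}\le C$.

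The key pointwise step uses the fact that $1/\ex{s}=1/\ex{p}+1/\ex{q}$ implies $s(x)/p(x)+s(x)/q(x)=1$, so at each $x$ the exponents $p(x)/s(x)$ and $q(x)/s(x)$ are classical H\"older conjugates. Applying the scalar Young inequality with $A=|f(x)|^{s(x)}$ and $B=|g(x)|^{s(x)}$ gives
\begin{equation*}
|f(x)g(x)|^{s(x)}\;\le\;\frac{s(x)}{p(x)}|f(x)|^{p(x)}+\frac{s(x)}{q(x)}|g(x)|^{q(x)},
\end{equation*}
where the coefficients are each at most $1$ (since $s\le\min(p,q)$). Integrating over $\z$ and using the unit-modular characterization of the Luxemburg norm under the normalization (so $\int |f|^{p(x)}\,dx\le 1$ and $\int |g|^{q(x)}\,dx\le 1$) yields $\int_{\z}|fg|^{s(x)}\,dx\le 2$.

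To pass from a modular bound of $2$ to a norm bound, I would scale: for any $\lambda\ge 1$ one has $(|fg|/\lambda)^{s(x)}\le \lambda^{-s^-}|fg|^{s(x)}$, hence $\int (|fg|/\lambda)^{s(x)}\,dx\le 2\lambda^{-s^-}\le 1$ as soon as $\lambda\ge 2^{1/s^-}$. This delivers $\normadefpv{fg}{s}\lesssim 1=\normadefpv{f}{p}\normadefpv{g}{q}$ in the normalized case, and by homogeneity in the general case, with an implicit constant depending only on $s^-$.

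The main nuisance, rather than any deep obstacle, is dealing with the sets where $p(x)$ or $q(x)$ equals $\infty$. On $\{p=\infty\}$ the relation $1/\ex{s}=1/\ex{p}+1/\ex{q}$ forces $s(x)=q(x)$, so the Young step must be replaced by the trivial estimate $|f(x)g(x)|\le \normadefpv{f}{p}|g(x)|$ using the convention $\varphi_{\ex{p}}(x,t)=\infty\cdot\car_{(1,\infty)}(t)$, which bounds $|f|$ by the Luxemburg norm a.e.\ on that set; the analogous argument applies on $\{q=\infty\}$. Splitting $\z$ into the three regions $\{p<\infty,q<\infty\}$, $\{p=\infty\}$, $\{q=\infty\}$ and treating each piece separately, then summing, yields the claimed inequality \eqref{holderrpq}. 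The particular case $\ex{s}\equiv 1$, giving \eqref{holderpp}, follows immediately since $\normadefpv{h}{1}=\int_{\z}|h|\,dx$ for the constant exponent $s\equiv 1$.
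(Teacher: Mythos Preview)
Your argument is correct and is essentially the standard proof of the variable-exponent H\"older inequality as presented in the cited reference \cite{DHHR}; the paper itself does not supply a proof of this lemma but merely quotes it, so there is no in-paper proof to compare against. One small remark: your three-region split omits the set $\{p=\infty\}\cap\{q=\infty\}$, but there $s=\infty$ and the normalization forces $|f|,|g|\le 1$ a.e., so $\varphi_{\ex{s}}(x,|fg|/\lambda)=0$ for any $\lambda\ge 1$ and this region contributes nothing to the modular.
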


	\begin{lema}[\cite{DHHR}]\label{3.2.6}
		Let $\ex{p}\in\mathcal{P}(\z)$ and $s\geq 1/p^-$.
		Then $\normadefpv{|f|^s}{p}=\normadefpv{f}{sp}^s$.	
	\end{lema}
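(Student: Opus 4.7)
The claim is the elementary scaling identity for variable Lebesgue norms, so the plan is simply to unpack the Luxemburg norm on both sides and make the change of variable $\lambda=\mu^{s}$. First I would observe that the hypothesis $s\ge 1/p^{-}$ forces $sp(x)\ge 1$ for a.e.\ $x$, hence $sp(\cdot)\in\mathcal{P}(\mathbb{R}^{n})$ and the norm $\normadefpv{f}{sp}$ is well defined on the right-hand side.

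Next I would write out the two modulars. On the set $\{p(x)<\infty\}$ (where also $sp(x)<\infty$) the defining function gives
$$\varphi_{p(\cdot)}\!\left(x,\frac{|f(x)|^{s}}{\lambda}\right)=\frac{|f(x)|^{sp(x)}}{\lambda^{p(x)}},\qquad \varphi_{sp(\cdot)}\!\left(x,\frac{|f(x)|}{\mu}\right)=\frac{|f(x)|^{sp(x)}}{\mu^{sp(x)}},$$
and setting $\lambda=\mu^{s}$ makes these two integrands identical. On the set $\{p(x)=\infty\}$ we also have $sp(x)=\infty$, and the convention $\infty\cdot\car_{(1,\infty)}$ shows that $\varphi_{p(\cdot)}(x,|f|^{s}/\lambda)$ is integrable (equivalently, finite a.e.) precisely when $|f(x)|^{s}/\lambda\le 1$, which under $\lambda=\mu^{s}$ is exactly the condition $|f(x)|/\mu\le 1$ that appears in $\varphi_{sp(\cdot)}(x,|f|/\mu)$. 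Therefore the two modular conditions, $\rodef{p}{|f|^{s}/\lambda}\le 1$ and $\rodef{sp}{|f|/\mu}\le 1$, coincide under the bijection $\lambda=\mu^{s}$ between $(0,\infty)$ and $(0,\infty)$.

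Finally, since $\mu\mapsto \mu^{s}$ is continuous and strictly increasing on $(0,\infty)$, the infimum commutes with this map:
$$\normadefpv{|f|^{s}}{p}=\inf\{\lambda>0:\rodef{p}{|f|^{s}/\lambda}\le 1\}=\inf\{\mu^{s}:\rodef{sp}{|f|/\mu}\le 1\}=\left(\normadefpv{f}{sp}\right)^{s}.$$
The only delicate point in the argument is making sure the convention for $p(y)=\infty$ behaves correctly under the substitution; everything else is a one-line change of variable. No deep tool is required, which is why this lemma is stated purely as a technical device for the heavier computations later in the paper.
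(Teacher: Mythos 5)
Your argument is correct and is the standard one: the paper itself gives no proof of this lemma, merely citing \cite{DHHR} (whose proof is exactly this change of variables $\lambda=\mu^{s}$ in the modular, with the hypothesis $s\ge 1/p^{-}$ guaranteeing $sp(\cdot)\in\mathcal{P}(\z)$ and the case $p(x)=\infty$ handled by the convention, as you do). The only cosmetic point is that you implicitly use $s>0$ when invoking strict monotonicity of $\mu\mapsto\mu^{s}$, which is the only situation in which the identity is meaningful and the only way the lemma is applied in the paper.
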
	

	\begin{lema}[\cite{DHHR}]\label{p en plog}
		Let  $\ex{p}\in \mathcal{P}^{log}(\zR^n)$.
		Then $\normadefpv{\car_Q}{p}\normadefpv{\car_Q}{p'} \simeq |Q|,$
		for every cubes $Q\subset\z$.
	\end{lema}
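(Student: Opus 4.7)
The plan is to prove the two directions $|Q|\lesssim \normadefpv{\car_Q}{p}\normadefpv{\car_Q}{p'}$ and $\normadefpv{\car_Q}{p}\normadefpv{\car_Q}{p'}\lesssim|Q|$ separately. The first is immediate; the second will require the full strength of the log-Hölder hypotheses.

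The lower bound follows at once from the generalized Hölder inequality \eqref{holderpp} applied to $f=g=\car_Q$: since $\int_{\z}\car_Q(x)\car_Q(x)\,dx = |Q|$, one obtains $|Q|\lesssim \normadefpv{\car_Q}{p}\normadefpv{\car_Q}{p'}$. This direction uses only $\ex{p}\in\mathcal{P}(\z)$, not log-Hölder regularity.

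For the reverse inequality, the strategy is to prove the sharper pointwise formula $\normadefpv{\car_Q}{p}\simeq |Q|^{1/p_Q}$, for a suitable cube-dependent exponent $p_Q$, together with the analogous estimate $\normadefpv{\car_Q}{p'}\simeq |Q|^{1/p_Q'}$ with $1/p_Q+1/p_Q'=1$; multiplying then yields $\simeq|Q|$. One takes $p_Q\simeq p^-_Q$ (equivalently $p^+_Q$) when $|Q|\le 1$ and $p_Q:=p_\infty$ when $|Q|>1$. For $|Q|\le 1$, the local log-Hölder condition gives the key estimate $|Q|^{|1/p(x)-1/p^-_Q|}\lesssim 1$ uniformly for $x\in Q$ (since $\ell(Q)\lesssim|x-y|^{-1}$ is controlled by $\log(e+1/\ell(Q))^{-1}$). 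Plugging $\lambda=c|Q|^{1/p^-_Q}$ into the Luxemburg modular $\int_Q\lambda^{-p(x)}dx$ and using the bound above shows the modular is $\le 1$ for appropriate $c$, yielding $\normadefpv{\car_Q}{p}\lesssim|Q|^{1/p^-_Q}$; the matching lower bound uses the same computation with $p^+_Q$ in place of $p^-_Q$. Since $|Q|^{1/p^-_Q}\simeq|Q|^{1/p^+_Q}$ by the same log-Hölder estimate, the resulting exponent is well-defined up to constants.

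For $|Q|>1$, one uses the global log-Hölder condition \eqref{log2} to compare $1/p(x)$ with $1/p_\infty$. If $Q$ is far from the origin, so that $|x|\gtrsim\ell(Q)$ throughout $Q$, then $|1/p(x)-1/p_\infty|\lesssim 1/\log(e+\ell(Q))$, and testing $\lambda\simeq|Q|^{1/p_\infty}$ in the modular yields $\normadefpv{\car_Q}{p}\simeq|Q|^{1/p_\infty}$ directly. The main obstacle is the case where $Q$ is large but meets the unit ball: here one splits $Q=(Q\cap B(0,1))\cup(Q\setminus B(0,1))$, controls the first piece via local log-Hölder (its contribution to the modular is absolutely bounded, since its volume is $\le|B(0,1)|$), controls the second by the global condition, and then combines the two integrals to recover $\normadefpv{\car_Q}{p}\simeq|Q|^{1/p_\infty}$ with constants depending only on the log-Hölder data. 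Applying the whole argument to $\ex{p'}$ (which also lies in $\mathcal{P}^{log}(\z)$, with $(p')_\infty=(p_\infty)'$) and multiplying the two resulting norm estimates completes the proof.
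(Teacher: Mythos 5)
Your route is the standard one behind the result you are asked to prove (the paper itself gives no argument, it quotes \cite{DHHR}): the H\"older lower bound $|Q|\lesssim\normadefpv{\car_Q}{p}\normadefpv{\car_Q}{p'}$ together with the sharper estimates $\normadefpv{\car_Q}{p}\simeq|Q|^{1/p_Q}$ and $\normadefpv{\car_Q}{p'}\simeq|Q|^{1-1/p_Q}$, with $p_Q\simeq p^{-}_Q$ for $|Q|\le1$ and $p_Q=p_\infty$ for $|Q|>1$. The lower bound and the case $|Q|\le 1$ are essentially fine: indeed for $|Q|\le1$ one even has the trivial two-sided bound $|Q|^{1/p^-_Q}\le\normadefpv{\car_Q}{p}\le|Q|^{1/p^+_Q}$, and the local log-H\"older condition is only needed to see $|Q|^{1/p^-_Q}\simeq|Q|^{1/p^+_Q}$ (with the usual extra care at points where $p(x)=\infty$, which the statement of the lemma allows, since there the modular is not $\lambda^{-p(x)}$). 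Note also that for the lemma you only need the two \emph{upper} bounds for $\normadefpv{\car_Q}{p}$ and $\normadefpv{\car_Q}{p'}$ combined with your H\"older lower bound, so the separate matching lower bounds are not needed.

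The large-cube case, however, has a genuine gap as written. On $Q\setminus B(0,1)$ the decay condition \eqref{log2}, used pointwise, only gives $|1/p(x)-1/p_\infty|\le C/\log(e+|x|)$, and for $x$ with $|x|\simeq1$ this is merely a constant; the corresponding factor in the modular at $\lambda\simeq|Q|^{1/p_\infty}$ is then of size $|Q|^{C'}$ for some $C'>0$, so the outer piece is \emph{not} controlled ``by the global condition'' alone. Enlarging the excluded ball (say to radius $\ell(Q)^{1/2}$) does not repair this, because the inner piece then has measure $\simeq\ell(Q)^{n/2}$, which is not uniformly bounded; moreover your dichotomy (``$|x|\gtrsim\ell(Q)$ throughout $Q$'' versus ``$Q$ meets $B(0,1)$'') omits large cubes at intermediate distance from the origin. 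The standard repair is a layered estimate: decompose $Q$ into the annuli $A_k=Q\cap\{2^{k-1}\le|x|<2^k\}$; on $A_k$ the contribution to the modular is at most $\min\bigl\{c_n2^{kn},\,c_n2^{kn}\,|Q|^{-1}\exp\bigl(c\,n\,p_\infty\log\ell(Q)/\log(e+2^k)\bigr)\bigr\}$, the first bound because the integrand is $\le1$ when $|Q|\ge1$, the second from \eqref{log2}; using the first bound for the finitely many $k$ with $2^k$ below a fixed constant (depending only on $n$, $p_\infty$ and the constant in \eqref{log2}) and the second for the remaining $k$, the sum over $k$ is bounded by a constant independent of $Q$, which gives $\normadefpv{\car_Q}{p}\lesssim|Q|^{1/p_\infty}$. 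Equivalently, one can invoke the averaged form of the decay condition, $\frac{1}{|Q|}\int_Q|1/p(x)-1/p_\infty|\,dx\lesssim1/\log(e+\ell(Q))$ for $\ell(Q)\ge1$, together with the key estimate for $\normadefpv{\car_Q}{p}$ in \cite{DHHR} (see also \cite{CF2}), which is precisely how the cited reference proceeds.
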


	Moreover, we have the following result.
	\begin{lema}[\cite{MP2}]\label{equivalenciabeta}
		Let $\ex{p}, \ex{q}\inplogdern$ such that $\ex{p}\leq \ex{q}$.
		Suppose that $1/\ex{p}=1/\ex{\beta}+1/\ex{q}$ then, for every cube $Q\subset\z$, $\normadefpv{\car_Q}{p} \simeq \normadefpv{\car_Q}{\beta}\normadefpv{\car_Q}{q}$.
	\end{lema}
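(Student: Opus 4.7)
The inequality $\normadefpv{\car_Q}{p}\lesssim \normadefpv{\car_Q}{\beta}\normadefpv{\car_Q}{q}$ is immediate from the generalized H\"older inequality \eqref{holderrpq}: since $1/\ex{p}=1/\ex{\beta}+1/\ex{q}$, writing $\car_Q=\car_Q\cdot\car_Q$ yields the bound directly. The nontrivial half is the reverse inequality $\normadefpv{\car_Q}{\beta}\normadefpv{\car_Q}{q}\lesssim\normadefpv{\car_Q}{p}$, and here pure H\"older plus duality will not suffice: any combination of \eqref{holderrpq}, \eqref{holderpp} and Lemma \ref{p en plog} only recovers the easy direction, because H\"older always produces a $\lesssim$ whose L.H.S.\ is $\normadefpv{\car_Q}{p}$ (or $\normadefpv{\car_Q}{s'}$ for some $s\in\{p,\beta,q\}$, which upon dualizing via Lemma \ref{p en plog} reduces to the same inequality). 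So the log-H\"older hypothesis must be used directly.

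The plan is to invoke the quantitative form of the norm estimate that underlies Lemma \ref{p en plog}: for every $s\in\mathcal{P}^{log}(\z)$ and every cube $Q\subset\z$,
$$\normadefpv{\car_Q}{s}\simeq |Q|^{1/s_Q^{\sharp}},$$
where $s_Q^{\sharp}$ is an explicit \emph{effective exponent} — concretely, $s_Q^{\sharp}=s(x_Q)$ for any chosen $x_Q\in Q$ when $|Q|\le 1$, and $s_Q^{\sharp}=s_\infty$ when $|Q|>1$. This equivalence is obtained by bounding the modular $\int_Q \lambda^{-s(x)}\,dx$ above and below: on small cubes the local log-H\"older condition makes $|Q|^{1/s(x)-1/s(y)}$ uniformly bounded for $x,y\in Q$, allowing $s(x)$ to be replaced by $s(x_Q)$ up to a multiplicative constant; on large cubes the decay condition (\ref{log2}) makes $s(x)$ effectively equal to $s_\infty$. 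These computations are standard and are carried out in \cite{DHHR}.

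Applying this estimate simultaneously to $s\in\{\ex{p},\ex{\beta},\ex{q}\}$ with a \emph{common} reference point $x_Q\in Q$ (for small cubes) or the common value at infinity (for large cubes), the pointwise identity $1/\ex{p}=1/\ex{\beta}+1/\ex{q}$ transfers to the effective exponents as $1/p_Q^{\sharp}=1/\beta_Q^{\sharp}+1/q_Q^{\sharp}$. Hence
$$\normadefpv{\car_Q}{\beta}\normadefpv{\car_Q}{q}\simeq |Q|^{1/\beta_Q^{\sharp}+1/q_Q^{\sharp}}=|Q|^{1/p_Q^{\sharp}}\simeq\normadefpv{\car_Q}{p},$$
which combined with the Hölder half gives the stated equivalence.

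The main obstacle is the effective-exponent estimate itself, specifically the case split $|Q|\le 1$ vs.\ $|Q|>1$ and the verification that the reference point (or the limit at infinity) can be chosen consistently for all three exponents. This compatibility is automatic, since $\ex{p},\ex{\beta},\ex{q}$ all lie in $\mathcal{P}^{log}(\z)$ with respect to the same points and share, via the pointwise relation, the same limit behaviour at infinity. Once this is in place the conclusion is a one-line computation, so the whole difficulty is concentrated in the explicit log-Hölder modular estimate.
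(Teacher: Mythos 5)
Your argument is correct and is essentially the standard route to this lemma: the paper itself does not reprove it (it is quoted from \cite{MP2}), and the proof there rests on exactly the estimate you invoke, namely the log-H\"older consequence $\normadefpv{\car_Q}{s}\simeq |Q|^{1/s^{\sharp}_Q}$ from \cite{DHHR} applied to $\ex{p}$, $\ex{\beta}$, $\ex{q}$ with a common reference point (or with the values at infinity), so that the pointwise identity $1/\ex{p}=1/\ex{\beta}+1/\ex{q}$ passes to the effective exponents. The only point worth making explicit is the one-line verification that $\ex{\beta}\in\mathcal{P}^{log}(\z)$ (with $1/\beta_\infty=1/p_\infty-1/q_\infty$), which is immediate since $1/\ex{\beta}$ is a difference of log-H\"older functions.
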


    \begin{teo}[\cite{DHHR}]\label{teo logl} Let $\ex{p},\ex{s},\ex{l}\in\mathcal{P}^{log}(\z)$ such that $\ex{p}=\ex{s}l(\cdot)$ with $l^->1$. 
    	Then \vspace{-0.2cm}
    			
		$$M_{L^{\ex{s}}}:\lp{p}(\z) \hookrightarrow \lp{p}(\z).$$
	\end{teo}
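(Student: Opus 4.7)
The plan is to follow the sparse-domination scheme used for analogous Bloom-type results in \cite{HRS,AMPRR,LOR,LORR} and adapt it to the variable Lebesgue setting. First, by Theorem~\ref{est puntual del conmutador de la int fracc de orden m}, it suffices to establish the $\lpw{p}{\mu}(\z)\to \lpw{q}{\lambda}(\z)$ bound for each of the finitely many fractional sparse forms $\mathcal{A}^{m,h}_{\mathcal{S},\alpha}(b,\cdot)$ with $0\le h\le m$. Since $1<q^-\le q^+<\infty$, the target norm admits a dual characterization, so I would fix $g\in\lp{q'}(\z)$ with $\normadefpv{g}{q'}\le 1$ and work with the bilinear pairing
$$\int_{\z}\mathcal{A}^{m,h}_{\mathcal{S},\alpha}(b,f)(x)\,g(x)\lambda(x)\,dx=\sum_{Q\in\mathcal{S}}|Q|^{\alpha/n}|(b-b_Q)^h f|_Q\int_Q|b-b_Q|^{m-h}g\lambda.$$

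Next, the nonlinear factors $|b-b_Q|^h$ and $|b-b_Q|^{m-h}$ must be linearized so that the $BMO^{\ex{\delta}}_{\nu^{1/m}}$ hypothesis can be applied cube-by-cube. For this, I would invoke Lemma~\ref{LORR Lemma 5.1.}, iterated $m$ times in the spirit of \cite{LORR,AMPRR}, to replace each factor $|b(x)-b_Q|$ by a sum over nested sparse sub-cubes $R\subseteq Q$ of a larger sparse family $\tilde{\mathcal{S}}\supset\mathcal{S}$:
$$|b(x)-b_Q|\lesssim\sum_{R\in\tilde{\mathcal{S}},\,R\subseteq Q}\frac{1}{|R|}\int_R|b-b_R|\cdot\car_R(x).$$
Each single-scale average is controlled by the defining inequality of $BMO^{\ex{\delta}}_{\nu^{1/m}}$: $|R|^{-1}\int_R|b-b_R|\lesssim\normadefpv{\car_R}{n/\delta}\,\nu^{1/m}(R)/|R|$. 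Expanding the products of nested-sum factors and using the geometric decay intrinsic to sparse families to collapse the nested sums to their diagonal, the pairing is dominated, up to a constant depending on the BMO norm of $b$, by a single sparse sum of the form
$$\sum_{R\in\tilde{\mathcal{S}}}\normadefpv{\car_R}{n/\delta}^{m}|R|^{\alpha/n}\,\nu(R)\,|f|_R\,(g\lambda)_R.$$

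To finish, I would recognize this as the bilinear pairing associated to a fractional sparse operator of parameter $\alpha+m\ex{\delta}$: by the defining relation for $\ex{\delta}$, the exponent $(\alpha+m\ex{\delta})/n=1/\ex{p}-1/\ex{q}$ is precisely the scaling that takes $\lpw{p}{\mu}$ into $\lpw{q}{\lambda}$. Writing $\nu=\mu/\lambda$ and redistributing via H\"older's inequality \eqref{holderrpq} together with Lemma~\ref{equivalenciabeta} and the definition of $A_{\ex{p},\ex{q}}$, the sum is bounded by $\normadeflpw{f}{p}{\mu}\,\normadefpv{g}{q'}\le\normadeflpw{f}{p}{\mu}$, which amounts to the standard two-weight sparse bound for the fractional integral of parameter $\alpha+m\ex{\delta}$ on variable Lebesgue spaces. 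Summing over the $3^n$ sparse families and over $h=0,\ldots,m$ completes the proof.

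The main obstacle is the sparse expansion at the heart of the second paragraph: iterating Lemma~\ref{LORR Lemma 5.1.} $m$ times and collapsing the resulting multiply nested sums into a clean diagonal sparse sum requires delicate combinatorial bookkeeping. More subtly, the variable exponent $\ex{\delta}$, inherited from $\ex{p}$ and $\ex{q}$ and log-H\"older continuous by virtue of their regularity, must be matched with the weighted BMO condition at each scale so that the $m$ applications of the inequality assemble into the single factor $\normadefpv{\car_R}{n/\delta}^{m}$. Without this regularity, the products of quantities such as $\normadefpv{\car_R}{n/\delta}$ would fail to combine multiplicatively across scales, and the reduction to the standard fractional sparse bound would break down.
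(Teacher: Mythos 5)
Your proposal does not address the statement at hand. Theorem~\ref{teo logl} asserts the boundedness of the Orlicz-type maximal operator $M_{L^{\ex{s}}}$, defined by $M_{L^{\ex{s}}}f(x)=\sup_{Q\ni x}\normadefpv{\ca f}{s}/\normadefpv{\ca}{s}$, on the variable Lebesgue space $\lp{p}(\z)$ under the factorization hypothesis $\ex{p}=\ex{s}l(\cdot)$ with $l^->1$ and log-H\"older regularity of the exponents. It is a result about a maximal function, cited in the paper from \cite{DHHR}, and it is used later as a tool (for instance in the proofs of Theorem~\ref{conmutador simbolo lip general} and Proposition~\ref{sparse fraccionario en lp variable con peso variable}). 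What you have written instead is a sketch of the sparse-domination argument for the Bloom-type estimate for iterated commutators of the fractional integral, i.e.\ essentially the proof of Theorem~\ref{conmutador integral fraccionaria en lpv con simb bloom}: you invoke Theorem~\ref{est puntual del conmutador de la int fracc de orden m}, Lemma~\ref{LORR Lemma 5.1.}, the $BMO^{\ex{\delta}}_{\nu^{1/m}}$ condition and the classes $A_{\ex{p},\ex{q}}$, none of which has anything to do with the claim that $M_{L^{\ex{s}}}$ is bounded on $\lp{p}(\z)$. The symbols $b$, the weights $\mu,\lambda,\nu$, the exponent $\ex{\delta}$ and the parameter $\alpha$ do not even appear in the statement you were asked to prove.

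Concretely, a proof of Theorem~\ref{teo logl} would have to exploit the structure $\ex{p}=\ex{s}\ex{l}$: by Lemma~\ref{3.2.6}-type rescaling one relates $\normadefpv{\ca f}{s}/\normadefpv{\ca}{s}$ to an $\ex{l}$-type average of $|f|^{s(\cdot)}$-data, and then the hypothesis $l^->1$ together with the log-H\"older continuity of the exponents allows one to reduce to the boundedness of the Hardy--Littlewood maximal operator on a variable space with exponent bounded away from $1$ (this is the route taken in \cite{DHHR}, where the result is proved; the present paper simply cites it). Your argument contains no reduction of $M_{L^{\ex{s}}}$ to $M$, no use of the factorization hypothesis, and no modular or norm estimate on $\lp{p}(\z)$, so it cannot be repaired into a proof of this statement; it is a proof attempt for a different theorem of the paper.
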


	\begin{teo}[\cite{MP2}]\label{teo max llogl}
		Let  $\ex{p},\ex{q}\in \mathcal{P}^{log}(\z)$ such that $\ex{p}\leq\ex{q}$.
		Let $\ex{\beta}$ and $\ex{s}\inplogdern$ be two functions such that $1/\beta(\cdot)=1/\ex{p}-1/\ex{q}$, $(p/s)^->1$ and $s^+<\infty$.
		Then \vspace{-0.2cm}
		
			\begin{equation*}
				\mbetas{\beta}{L^{\ex{s}}}:\lp{p}(\z) \hookrightarrow \lp{q}(\z).
			\end{equation*}
	\end{teo}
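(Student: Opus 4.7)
The plan is to pass through pointwise sparse domination, dualize in the variable Lebesgue scale, and then eliminate the oscillation factors using the sparse decomposition of $b-b_Q$. By Theorem \ref{est puntual del conmutador de la int fracc de orden m}, it suffices to prove, for each sparse family $\mathcal{S}$ and each $0\le h\le m$,
$$
\|\mathcal{A}^{m,h}_{\mathcal{S},\alpha}(b,f)\|_{\lpw{q}{\lambda}(\z)}\lesssim \|f\|_{\lpw{p}{\mu}(\z)}.
$$
Using the duality $(L^{\ex{q}})^{\ast}=L^{\ex{q'}}$, this reduces to bounding, uniformly over $\|g\|_{\ex{q'}}\le 1$, the bilinear sum
$$
\Sigma:=\sum_{Q\in\mathcal{S}} |Q|^{\alpha/n}\left(\frac{1}{|Q|}\int_Q |b-b_Q|^{h}|f|\,dx\right)\left(\int_Q |b-b_Q|^{m-h}\lambda g\,dx\right)
$$
by $\|f\mu\|_{\ex{p}}\|g\|_{\ex{q'}}$.

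To remove the oscillations, I would apply Lemma \ref{LORR Lemma 5.1.}: it produces a refinement $\tilde{\mathcal{S}}\supseteq\mathcal{S}$ on which $|b-b_Q|\lesssim \sum_{R\subseteq Q,\,R\in \tilde{\mathcal{S}}} \bigl(\tfrac{1}{|R|}\int_R|b-b_R|\bigr)\car_R$, and the hypothesis $b\in BMO^{\ex{\delta}}_{\nu^{1/m}}$ bounds each inner average by $\|b\|_{BMO^{\ex{\delta}}_{\nu^{1/m}}}\normadefpv{\car_R}{n/\delta}\,\nu^{1/m}(R)/|R|$. Iterating this decomposition $h$ times (respectively $m-h$ times) and absorbing the resulting sparse sums in the standard stopping-time fashion, as in \cite{LORR} and \cite{AMPRR}, each power $|b-b_Q|^j$ ultimately contributes $j$ copies of $\normadefpv{\car_Q}{n/\delta}$ together with $j$ weighted averages of $\nu^{1/m}$; summed over $h+(m-h)=m$ this yields exactly the factor $\normadefpv{\car_Q}{n/\delta}^{m}$ and recombines the $m$ copies of $\nu^{1/m}$ into a single factor $\nu=\mu/\lambda$ distributed over the two averages.

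After this step, sparseness $|Q|\le 2|E(Q)|$ converts the discrete sum into an integral, and splitting $\nu=\mu/\lambda$ symmetrically places $\mu$ with $f$ and $\lambda^{-1}$ against $\lambda g$. The residual geometric factor $|Q|^{\alpha/n}\normadefpv{\car_Q}{n/\delta}^{m}$ carries the fractional order $\alpha/n+m\ex{\delta}/n=1/\ex{p}-1/\ex{q}$, so $\Sigma$ is dominated by a standard fractional sparse bilinear form acting on $f\mu$ and $g$. The $A_{\ex{p},\ex{q}}$ hypothesis on $(\mu,\lambda)$, combined with the generalized H\"older inequality in Lemma \ref{equivalenciabeta} and the fractional maximal bound in Theorem \ref{teo max llogl}, then closes the estimate and yields $\Sigma\lesssim\|f\mu\|_{\ex{p}}\|g\|_{\ex{q'}}$.

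The hard part will be the second paragraph: iterating Lemma \ref{LORR Lemma 5.1.} to control $|b-b_Q|^j$ for $j>1$ requires keeping track of sparse refinements at each step and verifying that the weighted averages of $\nu^{1/m}$ reassemble, across the full sum, into a clean pairing of $f\mu$ against $g\lambda$. The symmetry between the choices $h$ and $m-h$ is genuinely only visible after this recombination, and the variable-exponent Lipschitz norms $\normadefpv{\car_R}{n/\delta}$ must be controlled uniformly along $\tilde{\mathcal{S}}$, which is where the log-H\"older regularity of $\ex{p}$ and $\ex{q}$ (hence of $\ex{\delta}$) is used.
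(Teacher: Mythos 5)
Your proposal does not address the statement you were asked to prove. Theorem \ref{teo max llogl} is an unweighted mapping property of the fractional maximal operator $\mbetas{\beta}{L^{\ex{s}}}$, namely $\mbetas{\beta}{L^{\ex{s}}}:\lp{p}(\z)\hookrightarrow\lp{q}(\z)$ under the hypotheses $1/\ex{\beta}=1/\ex{p}-1/\ex{q}$, $(p/s)^->1$ and $s^+<\infty$; there is no commutator, no symbol $b$, no pair of weights and no sparse family in it. What you sketched is instead the argument for Theorem \ref{conmutador integral fraccionaria en lpv con simb bloom}: sparse domination of $(I_\alpha)^m_b$ via Theorem \ref{est puntual del conmutador de la int fracc de orden m}, removal of the oscillations through Lemma \ref{LORR Lemma 5.1.}, duality, and the Bloom weights $\mu,\lambda,\nu$. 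None of the hypotheses of Theorem \ref{teo max llogl} (the auxiliary exponent $\ex{s}$, the conditions $(p/s)^->1$, $s^+<\infty$) play any role in your text, and none of the objects you manipulate appear in its conclusion.

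Moreover, the argument is circular as a proof of this statement: in your final step you invoke ``the fractional maximal bound in Theorem \ref{teo max llogl}'' to close the estimate, i.e.\ you assume exactly the result to be proved. A genuine proof cannot go through the sparse/commutator machinery at all (in this paper the theorem is quoted from \cite{MP2} rather than proved); it has to treat the operator directly, for instance by using the equivalence $\normadefpv{\car_Q}{p}\simeq\normadefpv{\car_Q}{\beta}\normadefpv{\car_Q}{q}$ of Lemma \ref{equivalenciabeta} (available since $\ex{p},\ex{q},\ex{\beta}\in\mathcal{P}^{log}(\z)$) to handle the factor $\normadefpv{\car_Q}{\beta}$, a Hedberg-type splitting to pass from the exponent $\ex{p}$ to $\ex{q}$, and the boundedness of $M_{L^{\ex{s}}}$ on $\lp{p}(\z)$ coming from $(p/s)^->1$ as in Theorem \ref{teo logl}. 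As written, your proposal neither proves the statement nor can it be repaired within the framework you chose.
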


	Let $p\in\mathcal{P}(\z)$, we say that a weight $w\in A_{\ex{p}}$ if there exists a positive constant $C$ such that, for every cube $Q\subset\z$,  
	\begin{equation}\label{Ap}
	{\normadefpv{\ca w}{p}}
	{\normadefpv{\ca w^{-1}}{p'}}\leq C|Q|.
	\end{equation}
	
	\begin{lema}[\cite{CDH}]\label{s de la apertura clase Ap}
		Let $\ex{p}\inplogdern$ with $p^->1$ and $w\in A_{\ex{p}}$. Then there exist a constant  $s\in(1/p^-,1)$ such that $w^{1/s}\in A_{s\ex{p}}$. 
	\end{lema}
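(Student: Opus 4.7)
I would follow the classical route for left-openness of Muckenhoupt's $A_p$ class, now adapted to the variable-exponent setting. The strategy combines two ingredients: (i) Lemma \ref{3.2.6}, which rewrites the Luxemburg norm of $w^{1/s}$ at exponent $sp(\cdot)$ as the $1/s$-power of the Luxemburg norm of $w$ at exponent $p(\cdot)$, valid precisely because $s\ge 1/p^-$; and (ii) a reverse-H\"older self-improvement for $w^{-1}$, available because an $A_{\ex{p}}$-weight is in particular a classical $A_\infty$-weight.

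My first step is to translate the hypothesis $w\in A_{\ex{p}}$ into a classical constant-exponent $A_q$ condition. Using Lemma \ref{p en plog} in the form $\normadefpv{\car_Q}{p}\normadefpv{\car_Q}{p'}\simeq|Q|$ together with the log-H\"older continuity of $p(\cdot)$ to freeze the exponent locally on each cube, (\ref{Ap}) forces $w\in A_{p^+}\subset A_\infty$ in the classical sense, and the same holds for $w^{-1}$. The classical reverse-H\"older inequality then provides $\epsilon_0>0$ such that, uniformly in $Q$,
\[\left(\frac{1}{|Q|}\int_Q w(x)^{-(1+\epsilon_0)}\,dx\right)^{\frac{1}{1+\epsilon_0}} \lesssim \frac{1}{|Q|}\int_Q w(x)^{-1}\,dx.\]

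Next, I would fix $s\in(1/p^-,1)$ with $1/s-1\le\epsilon_0$ and verify the defining inequality of $A_{s\ex{p}}$ for $W:=w^{1/s}$. Lemma \ref{3.2.6} handles the first factor immediately: $\normadefpv{\car_Q W}{sp}=\normadefpv{\car_Q w}{p}^{1/s}$. The second factor $\normadefpv{\car_Q w^{-1/s}}{(sp)'}$ is more delicate because $(sp)'(\cdot)$ is not a scalar multiple of a convenient exponent, so Lemma \ref{3.2.6} does not apply. I would unfold the Luxemburg norm into its modular form, freeze the exponent on $Q$ using log-H\"older continuity, and apply the reverse H\"older of the previous paragraph to the integrand $w^{-(sp)'(x)/s}$ to bound it in terms of $w^{-p'(x)}$. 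Reassembling yields
\[\normadefpv{\car_Q w^{-1/s}}{(sp)'} \lesssim |Q|^{1-1/s}\normadefpv{\car_Q w^{-1}}{p'}^{1/s}.\]
Multiplying the two estimates collapses the product to $|Q|^{1-1/s}\bigl(\normadefpv{\car_Q w}{p}\normadefpv{\car_Q w^{-1}}{p'}\bigr)^{1/s}$; invoking (\ref{Ap}) then bounds this by $|Q|^{1-1/s}\cdot |Q|^{1/s}=|Q|$, which is the $A_{s\ex{p}}$ condition for $W$.

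The main obstacle is the second factor: since $p\mapsto p'$ is not multiplicative, Lemma \ref{3.2.6} cannot rescue the argument directly. Controlling $\normadefpv{\car_Q w^{-1/s}}{(sp)'}$ by a power of $\normadefpv{\car_Q w^{-1}}{p'}$ forces one to compare $(sp)'(\cdot)$-modulars with $p'(\cdot)$-modulars cube by cube, and this is where the reverse-H\"older improvement for $w^{-1}$ combined with log-H\"older continuity of $p(\cdot)$ becomes essential. Quantitatively, one must pick $s$ simultaneously close enough to $1$ so that $1/s-1\le\epsilon_0$ (for reverse H\"older) and large enough to satisfy $s>1/p^-$ (for Lemma \ref{3.2.6}); since both constraints leave open intervals as $s\uparrow 1$, such a choice is always available.
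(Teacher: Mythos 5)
A preliminary remark: the paper does not prove this lemma at all; it is quoted verbatim from \cite{CDH}, so your argument can only be measured against the proof there, which is considerably more delicate than your sketch suggests. Your skeleton is the right classical one, and the bookkeeping is consistent: Lemma \ref{3.2.6} does give $\normadefpv{\car_Q w^{1/s}}{sp}=\normadefpv{\car_Q w}{p}^{1/s}$, and in the constant-exponent case your claimed bound $\normadefpv{\car_Q w^{-1/s}}{(sp)'}\lesssim |Q|^{1-1/s}\normadefpv{\car_Q w^{-1}}{p'}^{1/s}$ is exactly the reverse H\"older inequality for the dual weight $w^{-p'}$ with gain $(p-1)/(sp-1)$, and the two constraints on $s$ are indeed compatible.

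The gap lies in the two steps you treat as routine. First, ``freezing the exponent'' on a cube is justified by log-H\"older continuity only for norms of characteristic functions (statements of the type $\normadefpv{\car_Q}{p}\simeq|Q|^{1/p_Q}$ for small cubes, with the $p_\infty$ regime for large cubes); it does not allow you to replace $p(x)$ by $p_Q$ inside a modular $\int_Q (w(x)/\lambda)^{p(x)}\,dx$ for an arbitrary weight, since the error factor $(w(x)/\lambda)^{p(x)-p_Q}$ is unbounded unless one first extracts pointwise control of $w/\lambda$ in terms of $|Q|$ from the $A_{\ex{p}}$ condition itself; doing this (and handling large cubes via the decay condition \eqref{log2}) is precisely the technical content of \cite{CDH}, so your deduction that $w\in A_{p^+}$ and $w^{-1}\in A_\infty$ classically is unsupported as written. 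Second, even granting a classical self-improvement, the reverse H\"older inequality you invoke is for the wrong object: after freezing, the integrand to be controlled is $w^{-(sp)'(x)/s}=\bigl(w^{-p'(x)}\bigr)^{(p(x)-1)/(sp(x)-1)}$, so what is needed is a reverse H\"older inequality, uniform in $Q$, for the dual-type weight $w^{-p'_Q}$ (equivalently, an $A_\infty$-type property of $w^{-p'(\cdot)}$ with uniform constants), i.e.\ a comparison of $L^{p'_Q(1+\epsilon)}$- and $L^{p'_Q}$-averages of $w^{-1}$. Your stated inequality, a reverse H\"older estimate for $w^{-1}$ at exponent $1+\epsilon_0$, gives no information at the level $p'_Q$ and cannot be upgraded, because powers greater than one of an $A_\infty$ weight need not remain in $A_\infty$ (they may even fail local integrability, as $|x|^{a}$ versus $|x|^{ka}$ shows). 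So the crux of the lemma is concentrated exactly in the steps your proposal passes over, and as written the argument does not go through.
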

		
	\begin{lema}[\cite{MPR}]\label{Izuki con lipschitz general}
		Let $k$ be a positive integer and $\ex{p}\inplogdern$ with $1<p^-\le p^+<\infty$.
		Let $a\in T_\infty$ and $b\in \mathcal{L}_a$.
		Then, for every cube $Q\subset\z$,
		\vspace{-0.2cm}
		
		\begin{equation*}
		\frac{\normadefpv{\car_Q(b-b_Q)^k}{p}}{\normadefpv{\car_Q}{p}}
		\lesssim a(Q)^k\normadefp{b}{\mathcal{L}_a}^k.
		\end{equation*}
	\end{lema}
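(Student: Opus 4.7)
The plan is to reduce the variable-exponent norm to a classical $L^s$-power estimate for $b-b_Q$ via a Hölder splitting, and then apply a John--Nirenberg-type inequality for the generalized Lipschitz class $\mathcal{L}_a$.

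Since $p^+<\infty$, I fix a constant $s>p^+$ and define the variable exponent $\ex{q}$ by $1/\ex{q}=1/\ex{p}-1/s$. Because $\ex{p}\inplogdern$ and $s$ is constant, $\ex{q}\inplogdern$ with $1<q^-\le q^+<\infty$. The variable Hölder inequality \eqref{holderrpq} applied to the factorization $\car_Q|b-b_Q|^k=\car_Q\cdot\bigl(\car_Q|b-b_Q|^k\bigr)$ with the triple $(\ex{p},\ex{q},s)$ gives
\begin{equation*}
\normadefpv{\car_Q(b-b_Q)^k}{p}\lesssim \normadefpv{\car_Q}{q}\,\normadefp{\car_Q(b-b_Q)^k}{s},
\end{equation*}
while Lemma \ref{equivalenciabeta}, applied to the same triple with $s$ as the third exponent, yields $\normadefpv{\car_Q}{p}\simeq\normadefpv{\car_Q}{q}\,|Q|^{1/s}$.

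The crucial remaining piece is a John--Nirenberg-type inequality for $\mathcal{L}_a$: whenever $a\in T_\infty$ and $b\in\mathcal{L}_a$, one has
\begin{equation*}
\left(\frac{1}{|Q|}\int_Q|b-b_Q|^{ks}\,dx\right)^{1/(ks)}\lesssim a(Q)\,\normadefp{b}{\mathcal{L}_a},
\end{equation*}
which rearranges to $\normadefp{\car_Q(b-b_Q)^k}{s}\lesssim a(Q)^k|Q|^{1/s}\normadefp{b}{\mathcal{L}_a}^k$. Combining the three displays and dividing by $\normadefpv{\car_Q}{p}$ makes the common factors $\normadefpv{\car_Q}{q}$ and $|Q|^{1/s}$ cancel, leaving exactly $a(Q)^k\normadefp{b}{\mathcal{L}_a}^k$, as required.

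The main obstacle is the John--Nirenberg estimate itself. It is obtained by a Calderón--Zygmund stopping-time decomposition on $|b-b_Q|$ at a level proportional to $\|a\|_{\mathbf{t}_\infty}\,a(Q)\,\normadefp{b}{\mathcal{L}_a}$, iterated inductively: on every selected subcube $Q'\subset Q$, the $T_\infty$ condition \eqref{cond t infinito} supplies the uniform bound $a(Q')\le\|a\|_{\mathbf{t}_\infty}a(Q)$, which is precisely what prevents the iteration levels from drifting and produces the exponential decay $|\{x\in Q:|b(x)-b_Q|>\lambda\}|\lesssim|Q|\,e^{-c\lambda/(a(Q)\normadefp{b}{\mathcal{L}_a})}$, from which the $L^{ks}$ bound follows at once.
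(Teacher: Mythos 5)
Your argument is correct, and since the paper only quotes this lemma from \cite{MPR} without reproducing a proof, there is no internal argument to compare against; your derivation is a legitimate self-contained route. The structure is sound: with a constant $s>p^+$ and $1/\ex{q}=1/\ex{p}-1/s$, the H\"older inequality \eqref{holderrpq} gives $\normadefpv{\car_Q(b-b_Q)^k}{p}\lesssim\normadefpv{\car_Q}{q}\,\normadefp{\car_Q(b-b_Q)^k}{s}$, and Lemma \ref{equivalenciabeta} (with the constant exponent $s$ playing the role of $\ex{\beta}$, which is admissible because $\ex{q}\inplogdern$ and $1<q^-\le q^+<\infty$ follow from $p^+<s$) yields $\normadefpv{\car_Q}{p}\simeq\normadefpv{\car_Q}{q}|Q|^{1/s}$, so everything reduces to the constant-exponent estimate $\bigl(\tfrac{1}{|Q|}\int_Q|b-b_Q|^{ks}\bigr)^{1/(ks)}\lesssim a(Q)\normadefp{b}{\mathcal{L}_a}$. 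That John--Nirenberg-type inequality is the real content and you only sketch it, but the sketch is the standard Calder\'on--Zygmund iteration and it is correct: the $T_\infty$ condition supplies the uniform bound $a(Q')\le\|a\|_{\mathbf{t}_\infty}a(Q)$ on every stopping cube $Q'\subset Q$, which both guarantees the entry condition for each local decomposition at a level proportional to $\|a\|_{\mathbf{t}_\infty}a(Q)\normadefp{b}{\mathcal{L}_a}$ and keeps the level increments and the measure-halving uniform across generations, whence the exponential decay of $|\{x\in Q:|b(x)-b_Q|>\lambda\}|$ and all $L^{r}$ oscillation bounds with constants depending only on $n$, $r$ and $\|a\|_{\mathbf{t}_\infty}$. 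If you write this up in full, spell out that iteration (it is where $a\in T_\infty$ is genuinely used); note also that your constants inherit a dependence on $k$, $s$ and $\|a\|_{\mathbf{t}_\infty}$, which is consistent with the statement. As a side remark, your scheme recovers the known special cases at once: $a\equiv1$ gives the classical John--Nirenberg/Izuki-type estimate for BMO, and $a(Q)=|Q|^{\delta/n}$ the Lipschitz case, so the argument is a natural umbrella for both.
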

	
	\begin{lema}[\cite{MPR}]\label{obs teorema delta variable}
		Let $\ex{r}\in\mathcal{P}^{log}(\z)$ with $r_\infty\leq\ex{r}$, $\ex{\delta}$ be defined as in (\ref{delta}) and $b\in\mathbb{L}(\ex{\delta})$. Let $Q$ be a cube in $\z$ and $z\in kQ$ for some positive integer $k$.
		Then
		\vspace{-0.2cm}
		
			$$\left|b(z)-b_Q\right|
			\lesssim\normadefp{\ca}{n/\ex{\delta}}.			$$
	\end{lema}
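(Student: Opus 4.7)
The plan is to combine a telescoping argument through a chain of cubes with Lebesgue differentiation at $z$, in the spirit of the classical proof that Campanato-type spaces embed into Hölder classes. First, fix a cube $\hat Q$ containing both $Q$ and $z$ whose sidelength is comparable to $k\ell(Q)$, and write
\[ |b(z) - b_Q| \leq |b_Q - b_{\hat Q}| + |b_{\hat Q} - b(z)|. \]
Each term will be bounded through a chain of cubes linking the two endpoints.

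The key ingredient is a direct consequence of $b \in \mathbb{L}(\ex{\delta}) = \mathcal{L}_a$ with $a(R) = \normadefp{\car_R}{n/\ex{\delta}}$: for cubes $R' \subset R$ with $|R| \leq 2^n|R'|$,
\[ |b_{R'} - b_R| \leq \frac{1}{|R'|}\int_{R'}|b(y) - b_R|\,dy \lesssim \normadefp{\car_R}{n/\ex{\delta}}\|b\|_{\mathbb{L}(\ex{\delta})}. \]
Applying this along an ascending chain $Q = Q_0 \subset Q_1 \subset \cdots \subset Q_N = \hat Q$ with $|Q_{j+1}| = 2^n|Q_j|$ and $N \sim \log_2 k$ gives
\[ |b_Q - b_{\hat Q}| \lesssim \sum_{j=1}^{N} \normadefp{\car_{Q_j}}{n/\ex{\delta}}, \]
and applying it along a descending chain $\hat Q = \tilde Q_0 \supset \tilde Q_1 \supset \cdots$ of dyadic cubes all containing $z$, with $|\tilde Q_{j+1}| = 2^{-n}|\tilde Q_j|$, yields
\[ |b_{\hat Q} - b(z)| \leq \sum_{j=0}^{\infty} |b_{\tilde Q_j} - b_{\tilde Q_{j+1}}| \lesssim \sum_{j=0}^{\infty} \normadefp{\car_{\tilde Q_j}}{n/\ex{\delta}}, \]
where Lebesgue's differentiation theorem (valid for a.e. $z$ since $b \in \lplocc{1}$) is used to identify the pointwise limit $b_{\tilde Q_j} \to b(z)$.

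It remains to compare both sums with $\normadefp{\car_Q}{n/\ex{\delta}}$. For the descending chain, the local log-Hölder regularity of $\ex{r}$ (and hence of $\ex{\delta}$) near $z$, combined with the strict positivity $\delta^- = \alpha - n/r^- > 0$ that follows from $n/\alpha < r^-$, yields $\normadefp{\car_{\tilde Q_j}}{n/\ex{\delta}} \lesssim 2^{-j\delta^-}\normadefp{\car_{\hat Q}}{n/\ex{\delta}}$ once the cubes are small enough, so the geometric series sums to $\lesssim \normadefp{\car_{\hat Q}}{n/\ex{\delta}}$. For the ascending chain, a standard variable-exponent estimate---namely, that doubling a cube multiplies $\normadefp{\car_R}{n/\ex{\delta}}$ by at most a bounded factor when $\ex{r} \in \mathcal{P}^{\log}(\z)$ (the global part being guaranteed here by $r_\infty \leq \ex{r}$)---gives $\normadefp{\car_{\hat Q}}{n/\ex{\delta}} \lesssim C_k \normadefp{\car_Q}{n/\ex{\delta}}$, with $C_k$ depending only on $k$ and the log-Hölder data. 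Assembling the two pieces finishes the argument.

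The principal obstacle is the careful tracking of the norms $\normadefp{\car_R}{n/\ex{\delta}}$ as $R$ traverses cubes of varying sizes and locations: the descending step requires local log-Hölder regularity at $z$ to produce a convergent geometric sum, while the ascending step relies on the global log-Hölder condition (with $r_\infty \leq \ex{r}$) to bound the $k$-dependent doubling factor. Both log-Hölder conditions on $\ex{r}$, together with $n/\alpha < r^-$, are essential for the argument to close.
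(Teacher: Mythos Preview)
The paper does not give a proof of this lemma; it is quoted from \cite{MPR}. Your Campanato-type chaining argument is the standard route to such pointwise oscillation bounds, and the outline is correct. The technical ingredient that makes both sums converge is the estimate $\normadefpv{\car_R}{p}\simeq |R|^{1/p(x)}$ for $x\in R$ and $|R|\le 1$, together with $\normadefpv{\car_R}{p}\simeq |R|^{1/p_\infty}$ for $|R|\ge 1$, valid whenever $\ex{p}\in\mathcal{P}^{log}(\z)$ (see \cite{DHHR}); applied with $\ex{p}=n/\ex{\delta}$ this yields the geometric decay along the descending chain (ratio at worst $2^{-\delta^-}$, using $\delta^-=\alpha-n/r^->0$) and the $k$-dependent doubling bound along the ascending one. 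The hypothesis $r_\infty\le\ex{r}$ gives $\delta_\infty\le\ex{\delta}$, which keeps the large-cube regime under control. Your remark that Lebesgue differentiation gives the conclusion only for a.e.\ $z$ is accurate, and this already suffices for the way the lemma is applied in the paper (under an integral in the proof of Theorem~\ref{conmutador simbolo lip variable}); in fact the same argument shows $b$ admits a H\"older-continuous representative for which the inequality holds at every $z\in kQ$.
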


\section{Some previous results}\label{some previous}

	The following proposition is useful in order to prove the Theorem \ref{teo simb lips bloom}. 
	\begin{propo}\label{integral de b por f menor q la integral del operador sparse}
		Let $\eta$ be a weight, $0\leq\ex{\delta}<n$ and $k$ a non negative integer.
		Let $\mathcal{S}$ be a sparse family contained in a dyadic grid $\mathscr{D}$.  
		Assume that $b\in BMO_\eta^{\ex{\delta}}$. 
		Then there exists a sparse family $\tilde{\mathcal{S}}\subset\mathscr{D}$ such that $\mathcal{S}\subset\tilde{\mathcal{S}}$ and for every cube $Q\in\tilde{\mathcal{S}}$, 
		\vspace{-0.3cm}
		
			\begin{align*}
				\int_{Q}|b(x)-b_Q|^k|f(x)|\,dx
				\lesssim \normadefp{b}{BMO^{\ex{\delta}}_\eta}^k\normadefpv{\ca}{n/\delta}^{k}
				\int_{Q} (\mathcal{A}_{\tilde{\mathcal{S}}})^k_\eta f(x)\,dx, \esp f\in \lplocc{1},
			\end{align*}
		where 
		$(\mathcal{A}_{\tilde{\mathcal{S}}})_\eta f(x)=\eta(x)\mathcal{A}_{\tilde{\mathcal{S}}}f(x)$ and $(\mathcal{A}_{\tilde{\mathcal{S}}})^k_\eta$ denotes the operator $(\mathcal{A}_{\tilde{\mathcal{S}}})_\eta$ iterated $k$ times.
	\end{propo}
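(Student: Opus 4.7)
My plan is the following. First, I invoke Lemma~\ref{LORR Lemma 5.1.} to obtain the enlargement $\tilde{\mathcal{S}}\supset\mathcal{S}$ for which
$$|b(x)-b_Q|\lesssim\sum_{R\in\tilde{\mathcal{S}},\,R\subseteq Q}\alpha_R\,\car_R(x),\qquad \alpha_R:=\frac{1}{|R|}\int_R|b(y)-b_R|\,dy,$$
holds a.e.\ on every $Q\in\tilde{\mathcal{S}}$. The definition of $BMO^{\ex{\delta}}_\eta$ gives $\alpha_R\leq\normadefp{b}{BMO^{\ex{\delta}}_\eta}\,\normadefpv{\car_R}{n/\delta}\,\eta(R)/|R|$, and monotonicity of the Luxemburg norm yields $\normadefpv{\car_R}{n/\delta}\leq\normadefpv{\car_Q}{n/\delta}$ for $R\subseteq Q$, hence
$$|b(x)-b_Q|\lesssim\normadefp{b}{BMO^{\ex{\delta}}_\eta}\,\normadefpv{\car_Q}{n/\delta}\sum_{\substack{R\in\tilde{\mathcal{S}}\\ R\subseteq Q,\,R\ni x}}\frac{\eta(R)}{|R|}\quad\text{a.e.\ on }Q.$$

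Second, since the $\tilde{\mathcal{S}}$-cubes containing a fixed $x$ are totally ordered by dyadic nesting, the multinomial expansion of the $k$-th power of a sum indexed by this chain is controlled, up to the constant $k!$, by the sum over nested $k$-tuples $R_1\supseteq\cdots\supseteq R_k$. Raising the previous inequality to the $k$-th power therefore yields
$$|b(x)-b_Q|^k\lesssim\normadefp{b}{BMO^{\ex{\delta}}_\eta}^k\normadefpv{\car_Q}{n/\delta}^k\sum_{\substack{R_1\supseteq\cdots\supseteq R_k\\ R_j\in\tilde{\mathcal{S}},\,R_j\subseteq Q,\,R_j\ni x}}\prod_{j=1}^{k}\frac{\eta(R_j)}{|R_j|}.$$
Multiplying by $|f(x)|$ and integrating over $Q$, the constraints $R_j\ni x$ for every $j$ reduce to $x\in R_k$, so the innermost integration produces $|R_k|\,|f|_{R_k}$ and
$$\int_Q|b-b_Q|^k|f|\,dx\lesssim\normadefp{b}{BMO^{\ex{\delta}}_\eta}^k\normadefpv{\car_Q}{n/\delta}^k\sum_{\substack{R_1\supseteq\cdots\supseteq R_k\subseteq Q\\ R_j\in\tilde{\mathcal{S}}}}\left(\prod_{j=1}^{k-1}\frac{\eta(R_j)}{|R_j|}\right)\eta(R_k)|f|_{R_k}.$$

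Finally, I identify the nested sum on the right as a lower bound for $\int_Q(\mathcal{A}_{\tilde{\mathcal{S}}})^k_\eta f\,dx$. This is a short induction on $k$ that unfolds the definition $(\mathcal{A}_{\tilde{\mathcal{S}}})_\eta g(x)=\eta(x)\sum_{R\in\tilde{\mathcal{S}}}|g|_R\car_R(x)$: integrating the $j$-th composition against $\eta$ over $R_{j-1}$ restricts the inner sum to cubes $R_j\subseteq R_{j-1}$ and produces the factor $\eta(R_j)/|R_j|$, while the innermost pairing with $f$ yields $\eta(R_k)|f|_{R_k}$; all discarded cross-contributions are nonnegative. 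The main technical obstacle is the chain-reorganization in the second step; although intuitively transparent since the relevant indexing family is a chain at every $x$, it must absorb the diagonal contributions of the multinomial expansion into the nested-$k$-tuple sum, and this is the only non-bookkeeping point of the argument.
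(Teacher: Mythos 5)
Your proposal follows essentially the same route as the paper: invoke Lemma~\ref{LORR Lemma 5.1.} to get $\tilde{\mathcal{S}}$, bound the oscillation averages via the $BMO^{\ex{\delta}}_\eta$ condition together with the monotonicity $\normadefpv{\car_R}{n/\delta}\leq\normadefpv{\car_Q}{n/\delta}$, expand the $k$-th power and dominate it (up to $k!$) by a sum over dyadically nested $k$-tuples, then integrate and unfold $(\mathcal{A}_{\tilde{\mathcal{S}}})_\eta$ iteratively. The steps, including the chain-reorganization that absorbs diagonal terms into non-strictly nested tuples, coincide with the paper's proof.
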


	When $\ex{\delta}\equiv0$, the result above was proved in \cite{LOR} in the case $k=1$, and  in \cite{LORR} for $k>1$.
	\begin{proof}[Proof of Proposition~\ref{integral de b por f menor q la integral del operador sparse}]
		Let $\tilde{\mathcal{S}}$ be the sparse family provided by Lemma \ref{LORR Lemma 5.1.} and $Q\in\tilde{\mathcal{S}}$. Then, by this lemma, we have
		\vspace{-0.4cm}
		
			\begin{align*}
				|b(x)-b_Q|
				&\lesssim 
				\sum_{\overset{R\in \tilde{\mathcal{S}}}{R\subset Q}} \frac{1}{|R|}\int_R|b(y)-b_R|\,dy\cdot\car_{R}(x)
				\lesssim \normadefp{b}{BMO^{\ex{\delta}}_\eta}
				\sum_{\overset{R\in \tilde{\mathcal{S}}}{R\subset Q}} \frac{\normadefpv{\car_R}{n/\delta}\eta(R)}{|R|}\cdot\car_{R}(x)\\
				&\lesssim \normadefp{b}{BMO^{\ex{\delta}}_\eta}
				\normadefpv{\ca}{n/\delta}
				\sum_{\overset{R\in \tilde{\mathcal{S}}}{R\subset Q}} \frac{\eta(R)}{|R|}\cdot\car_{R}(x).
			\end{align*}
		Therefore,
		\vspace{-0.7cm}
		
			\begin{align*}
				\int_{Q}|b(x)-b_Q|^k|f(x)|\,dx
				\lesssim
				\normadefp{b}{BMO^{\ex{\delta}}_\eta}^k 
				\normadefpv{\ca}{n/\delta}^{k}
				\int_{Q}
				\left(\sum_{\overset{R\in \tilde{\mathcal{S}}}{R\subset Q}} \frac{\eta(R)}{|R|}\cdot\car_{R}(x)\right)^k|f(x)|\,dx.
			\end{align*}
		Since the cubes from $\tilde{\mathcal{S}}$ are dyadic,
		\vspace{-0.2cm}
		
			\begin{align*}
				\left(\sum_{\overset{R\in \tilde{\mathcal{S}}}{R\subset Q}}
				\frac{\eta(R)}{|R|}\cdot\car_{R}(x)\right)^k
				&=\sum_{\overset{\{R_1,R_2,...,R_k\}\subset\tilde{\mathcal{S}}}{R_i\subset Q}}
				\left(\prod_{i=1}^{k}\frac{\eta(R_i)}{|R_i|}\right)\cdot\car_{R_1}(x)\car_{R_2}(x)...\car_{R_k}(x)\\
				&=\sum_{\overset{\{R_1,R_2,...,R_k\}\subset\tilde{\mathcal{S}}}{R_i\subset Q}}
				\left(\prod_{i=1}^{k}\frac{\eta(R_i)}{|R_i|}\right)\cdot\car_{R_1\cap R_2\cap...\cap R_k}(x)\\
				&\leq k!\sum_{\overset{\{R_1,R_2,...,R_k\}\subset\tilde{\mathcal{S}}}{R_k\subset R_{k-1}\subset...\subset R_1\subset Q}}
				\left(\prod_{i=1}^{k}\frac{\eta(R_i)}{|R_i|}\right)\cdot\car_{R_k}(x).
			\end{align*}
		Hence
		\vspace{-0.2cm}
		
			\begin{align*}
				&\int_{Q}
				\left(\sum_{\overset{R\in \tilde{\mathcal{S}}}{R\subset Q}} \frac{\eta(R)}{|R|}\cdot\car_{R}(x)\right)^k|f(x)|\,dx
				\lesssim
				\sum_{\overset{\{R_1,R_2,...,R_k\}\subset\tilde{\mathcal{S}}}{R_k\subset R_{k-1}\subset...\subset R_1\subset Q}}
				\left(\prod_{i=1}^{k}\frac{\eta(R_i)}{|R_i|}\right)\int_{R_k}|f(x)|\,dx\\
				&\quad\quad\quad\quad\quad\quad\quad=
				\sum_{\overset{\{R_1,R_2,...,R_k\}\subset\tilde{\mathcal{S}}}{R_k\subset R_{k-1}\subset...\subset R_1\subset Q}}
				\left(\prod_{i=1}^{k}\frac{\eta(R_i)}{|R_i|}\right)|f|_{R_k}|R_k|\\
				&\quad\quad\quad\quad\quad\quad\quad=
				\sum_{\overset{\{R_1,R_2,...,R_{k-1}\}\subset\tilde{\mathcal{S}}}{R_{k-1}\subset R_{k-2}\subset...\subset R_1\subset Q}}
				\left(\prod_{i=1}^{k-1}\frac{\eta(R_i)}{|R_i|}\right)
				\sum_{\overset{R_k\in\tilde{\mathcal{S}}}{R_k\subset R_{k-1}}}
				\int_{R_k}|f|_{R_k}\eta(x)\,dx\\
				&\quad\quad\quad\quad\quad\quad\quad= \sum_{\overset{\{R_1,R_2,...,R_{k-1}\}\subset\tilde{\mathcal{S}}}{R_{k-1}\subset R_{k-2}\subset...\subset R_1\subset Q}}
				\left(\prod_{i=1}^{k-1}\frac{\eta(R_i)}{|R_i|}\right)
				\int_{R_{k-1}}\left(\sum_{\overset{R_k\in\tilde{\mathcal{S}}}{R_k\subset R_{k-1}}}|f|_{R_k}\cdot\car_{R_k}(x)\right)\eta(x)\,dx\\
				&\quad\quad\quad\quad\quad\quad\quad\leq 
				\sum_{\overset{\{R_1,R_2,...,R_{k-1}\}\subset\tilde{\mathcal{S}}}{R_{k-1}\subset R_{k-2}\subset...\subset R_1\subset Q}}
				\left(\prod_{i=1}^{k-1}\frac{\eta(R_i)}{|R_i|}\right)
				\int_{R_{k-1}}\mathcal{A}_{\tilde{\mathcal{S}}}(f)(x)\eta(x)\,dx\\
				&\quad\quad\quad\quad\quad\quad\quad\leq 
				\sum_{\overset{\{R_1,R_2,...,R_{k-1}\}\subset\tilde{\mathcal{S}}}{R_{k-1}\subset R_{k-2}\subset...\subset R_1\subset Q}}
				\left(\prod_{i=1}^{k-1}\frac{\eta(R_i)}{|R_i|}\right)
				\int_{R_{k-1}}(\mathcal{A}_{\tilde{\mathcal{S}}})_\eta f(x)\,dx.
			\end{align*}
		Using this argument $k$ times we conclude
		\vspace{-0.6cm}
		
			\begin{align*}
				\int_{Q}
				\left(\sum_{\overset{R\in \tilde{\mathcal{S}}}{R\subset Q}} \frac{\eta(R)}{|R|}\cdot\car_{R}(x)\right)^k|f(x)|\,dx
				&\lesssim
				\int_{Q} (\mathcal{A}_{\tilde{\mathcal{S}}})^k_\eta f(x)\,dx.		
			\end{align*}
		We are done. 
	\end{proof}

	For $\ex{\beta}\in\mathcal{P}(\z)$ and $\mathcal{S}$ a sparse family, we define the variable fractional sparse operator $\mathcal{I}^{\ex{\beta}}_\mathcal{S}$ by  
	\vspace{-0.4cm}
	
		$$\mathcal{I}^{\ex{\beta}}_\mathcal{S}f(x)=\sum_{Q\in \mathcal{S}}\normadefpv{\ca}{\beta}f_Q\cdot\ca(x), \esp f\in \lplocc{1}.$$ 
	
	If $0<\alpha<n$ and $\ex{\beta}\equiv n/\alpha$, this operator was studied in \cite{CU} in the classical context of weighted Lebesgue spaces. 
	We are interested in studying the boundedness properties of the operators $\mathcal{I}^{\ex{\beta}}_\mathcal{S}$ on weighted variable Lebesgue spaces.
	
	The classes of weights we will be dealing with are a variable version of the well known $A_{p,q}$ classes of Muckenhoupt and Wheeden (see \cite{MuckenhouptWheeden}). 	Let $\ex{p},\ex{q}\in\mathcal{P}(\z)$.
	We say that a weight $w\in A_{\ex{p},\ex{q}}$ if there exists a positive constant $C$ such that, for every cube $Q\subset\z$,  
		\begin{equation}\label{Ap,q}
			\frac{\normadefpv{\ca w}{q}}{\normadefpv{\ca}{q}}\frac{\normadefpv{\ca w^{-1}}{p'}}{\normadefpv{\ca }{p'}}\leq C.
		\end{equation}
	The smallest of such constants will be denoted by $[w]_{A_{\ex{p},\ex{q}}}$.
	Note that $w\in A_{\ex{p},\ex{q}}$ is equivalent to $w^{-1}\in A_{\ex{q'},\ex{p'}}$. 
	When $\ex{p}=\ex{q}$, we obtain the $A_{\ex{p}}$ class given 
	in \cite{CDH} that characterizes the boundedness of the Hardy–Littlewood maximal operator on $\lpw{p}{w}(\z)$.

	We obtain the following boundedness result for $\mathcal{I}^{\ex{\beta}}_\mathcal{S}$ between variable weighted Lebesgue spaces.

	\begin{propo}\label{sparse fraccionario en lp variable con peso variable}
		Let $\ex{p},\ex{q}\inplogdern$ such that $1<p^-\le\ex{p}\leq\ex{q}\le q^+<\infty$. Let $\ex{\beta}$ be the exponent defined by $1/\ex{\beta}=1/\ex{p}-1/\ex{q}$ and $\mathcal{S}$ be a sparse family . 
		Assume $w\in A_{\ex{p},\ex{q}}$, then
		\vspace{-0.4cm}
		
		 $$\mathcal{I}^{\ex{\beta}}_{\mathcal{S}}:\lpwr{p}{w}\to\lpwr{q}{w}.$$
	\end{propo}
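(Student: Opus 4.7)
The plan is to proceed by duality, sparseness, Hölder, and the variable-maximal theory developed in Section~2. By variable-Lebesgue duality,
$\normadefpv{w\,\mathcal{I}^{\ex{\beta}}_{\mathcal{S}}f}{q}\lesssim\sup\int \mathcal{I}^{\ex{\beta}}_{\mathcal{S}}f\cdot wh\,dx$
where the supremum is taken over nonnegative $h$ with $\normadefpv{h}{q'}\le 1$; expanding the definition of the sparse operator and invoking sparseness ($|Q|\le 2|E(Q)|$ with the $E(Q)$ in \eqref{cubos E(Q)} pairwise disjoint) reduces matters to bounding
$\sum_{Q\in\mathcal{S}}\normadefpv{\car_Q}{\beta}\,f_Q\,(wh)_Q\,|E(Q)|.$

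The next step is to split off the weight using Hölder. Writing $f_Q=|Q|^{-1}\int_Q(fw)(w^{-1})$ and $(wh)_Q=|Q|^{-1}\int_Q w\,h$, inequality \eqref{holderpp} applied with the pairs $(\ex{p},\ex{p'})$ and $(\ex{q},\ex{q'})$ gives
$f_Q(wh)_Q\lesssim|Q|^{-2}\normadefpv{\car_Q fw}{p}\normadefpv{\car_Q w^{-1}}{p'}\normadefpv{\car_Q w}{q}\normadefpv{\car_Q h}{q'}.$
The $A_{\ex{p},\ex{q}}$ condition \eqref{Ap,q} kills the weight pair, $\normadefpv{\car_Q w^{-1}}{p'}\normadefpv{\car_Q w}{q}\lesssim[w]_{A_{\ex{p},\ex{q}}}\normadefpv{\car_Q}{p'}\normadefpv{\car_Q}{q}$. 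Invoking Lemma~\ref{p en plog} ($\normadefpv{\car_Q}{p}\normadefpv{\car_Q}{p'}\simeq|Q|$) together with Lemma~\ref{equivalenciabeta} in the form $\normadefpv{\car_Q}{q'}\simeq\normadefpv{\car_Q}{\beta}\normadefpv{\car_Q}{p'}$ (which follows from $1/\ex{q'}=1/\ex{\beta}+1/\ex{p'}$), the constant $\normadefpv{\car_Q}{\beta}\normadefpv{\car_Q}{p'}\normadefpv{\car_Q}{q}/|Q|^2$ collapses to $\simeq|Q|^{-1}$, and the sum is controlled by $\sum_Q(|E(Q)|/|Q|)\normadefpv{\car_Q fw}{p}\normadefpv{\car_Q h}{q'}$.

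Finally, I would rewrite $|E(Q)|/|Q|$ as $\int_{E(Q)}|Q|^{-1}\,dx$ and factorize $|Q|\simeq\normadefpv{\car_Q}{s}\normadefpv{\car_Q}{s'}$ for a suitable $\ex{s}\in\mathcal{P}^{\log}(\z)$ (again Lemma~\ref{p en plog}). Then the integrand $\normadefpv{\car_Q fw}{p}\normadefpv{\car_Q h}{q'}/|Q|$ is pointwise dominated, for $x\in E(Q)\subset Q$, by $M_{L^{\ex{s}}}(fw)(x)\,M_{L^{\ex{s'}}}(h)(x)$. Disjointness of the sets $E(Q)$ turns the $Q$-sum into $\int_{\z}M_{L^{\ex{s}}}(fw)\,M_{L^{\ex{s'}}}(h)\,dx$; \eqref{holderpp} together with Theorem~\ref{teo logl} applied to each maximal on $L^{\ex{p}}$ and $L^{\ex{q'}}$ respectively yields $\lesssim\normadefpv{fw}{p}\normadefpv{h}{q'}$, and taking $\sup_h$ concludes the argument. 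The main obstacle is precisely this last step: one needs $\ex{s}$ satisfying both $(\ex{p}/\ex{s})^->1$ and $(\ex{q'}/\ex{s'})^->1$ simultaneously, and since $\ex{p}\le\ex{q}$ forces $\ex{q'}\le\ex{p'}$ these two constraints pull in opposite directions for any conjugate pair. Resolving this requires an openness-type argument for the off-diagonal class $A_{\ex{p},\ex{q}}$ in the spirit of Lemma~\ref{s de la apertura clase Ap}, allowing one to perturb $(\ex{p},\ex{q})$ slightly before applying Hölder, or equivalently an off-diagonal Rubio de Francia extrapolation (cf.\ \cite{CDH}) reducing to the classical constant-exponent case of \cite{CU}.
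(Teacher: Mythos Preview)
Your overall strategy---duality, sparseness, H\"older to peel off the weight, then domination by variable maximal operators---is exactly the paper's approach, and you correctly identify that the crux is an openness property for $A_{\ex{p},\ex{q}}$. The paper proves precisely such a result (Proposition~\ref{apertura Ap,q}): from $w\in A_{\ex{p},\ex{q}}$ one obtains exponents $\ex{u},\ex{v}\in\mathcal P^{\log}(\z)$ with $(p/u)^->1$, $(q'/v')^->1$ and $w\in A_{\ex{u},\ex{v}}$.

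Where your execution diverges is in the order of operations. You apply H\"older at the original pairs $(\ex{p},\ex{p'})$ and $(\ex{q},\ex{q'})$, arrive at $\sum_Q |E(Q)|\,|Q|^{-1}\normadefpv{\car_Q fw}{p}\normadefpv{\car_Q h}{q'}$, and then try to dominate pointwise by $M_{L^{\ex{s}}}(fw)\,M_{L^{\ex{s'}}}(h)$. This step cannot be repaired a~posteriori: to have $\normadefpv{\car_Q fw}{p}/\normadefpv{\car_Q}{s}\lesssim M_{L^{\ex{s}}}(fw)(x)$ you would need $\normadefpv{\car_Q fw}{p}\lesssim\normadefpv{\car_Q fw}{s}$ uniformly in $Q$, which fails for $s(\cdot)<p(\cdot)$; but $s(\cdot)\ge p(\cdot)$ kills the boundedness $M_{L^{\ex{s}}}:L^{\ex{p}}\to L^{\ex{p}}$. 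The obstruction is not merely in choosing the conjugate pair---it is that once the averages are computed at $\ex{p}$ and $\ex{q'}$ there is no room left.

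The paper resolves this exactly as you suggest in your last sentence: it perturbs \emph{before} H\"older. One writes $f_Q=(fw\cdot w^{-1})_Q$ and applies \eqref{holderpp} at $(\ex{u},\ex{u'})$, and $g_Q=(gw^{-1}\cdot w)_Q$ at $(\ex{v'},\ex{v})$; then $[w]_{A_{\ex{u},\ex{v}}}$ absorbs the weight factors, and the remaining pieces are directly
\[
\normadefpv{\car_Q}{\beta}\frac{\normadefpv{\car_Q fw}{u}}{\normadefpv{\car_Q}{u}}\le M_{\ex{\beta},L^{\ex{u}}}(fw)(x),
\qquad
\frac{\normadefpv{\car_Q gw^{-1}}{v'}}{\normadefpv{\car_Q}{v'}}\le M_{L^{\ex{v'}}}(gw^{-1})(x),
\]
with no exponent mismatch. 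Theorems~\ref{teo max llogl} and~\ref{teo logl} then give $M_{\ex{\beta},L^{\ex{u}}}:L^{\ex{p}}\to L^{\ex{q}}$ and $M_{L^{\ex{v'}}}:L^{\ex{q'}}\to L^{\ex{q'}}$. Note also that the paper keeps $\normadefpv{\car_Q}{\beta}$ attached to the $fw$ average (yielding the \emph{fractional} maximal $M_{\ex{\beta},L^{\ex{u}}}$) rather than cancelling it against characteristic-function norms as you do; this avoids having to reconcile $\normadefpv{\car_Q}{p}$ with $\normadefpv{\car_Q}{q'}$ and makes the off-diagonal nature of the estimate transparent.
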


	Note that if $\ex{p}\equiv\ex{q}$, $w\in A_{\ex{p}}$ and $\mathcal{S}$ be a sparse family, from the proposition above we obtain that  		
		\begin{equation}\label{sparse en lp variable con peso variable}
			\mathcal{A}_{\mathcal{S}}:\lpwr{p}{w}\to\lpwr{p}{w}.
		\end{equation}
	This generalizes the well-known result proved in \cite{CMPc} or \cite{LN} for the sparse operator $\mathcal{A}_\mathcal{S}$ in the classical context. 
\medskip

	In order to prove the Proposition \ref{sparse fraccionario en lp variable con peso variable} let see some useful properties of the classes $A_{\ex{p},\ex{q}}$.
\medskip

	Note that if $\ex{p},\ex{q}\inplogdern$, $\ex{p}\le \ex{q}$, the opposite inequality of \eqref{Ap,q} follows by H\"{o}lder's inequality \eqref{holderrpq} and Lemma \ref{equivalenciabeta}, so we have that 
	\vspace{-0.2cm}
	
		$$\frac{\normadefpv{\ca w}{q}}{\normadefpv{\ca}{q}}\frac{\normadefpv{\ca w^{-1}}{p'}}{\normadefpv{\ca }{p'}}\simeq1$$
	if $w\in A_{\ex{p},\ex{q}}$.	
\medskip

	Let $\ex{p},\ex{q}\inplogdern$ such taht  $\ex{p}\le \ex{q}$, then \vspace{-0.4cm}
	
		\begin{align}\label{desigualdad promedios}
			\frac{\normadefpv{\ca f}{p}}
			{\normadefpv{\ca}{p}}
			\lesssim 
			\frac{\normadefpv{\ca f}{q}}
			{\normadefpv{\ca}{q}},\esp f\in \lplocc{1}.
		\end{align}
	Indeed, let $\ex{\beta}$ be defined by $1/\ex{\beta}=1/\ex{p}-1/\ex{q}$. 
	Then $\ex{\beta}\inplogdern$ and, by H\"{o}lder's inequality \eqref{holderrpq} and Lemma \ref{equivalenciabeta} we obtain \eqref{desigualdad promedios}. 
	
\begin{lema}\label{Prop pesos Apq}
	Let $\ex{p},\ex{q}\inplogdern$ such that $\ex{p}\le \ex{q}$ and $w\in A_{\ex{p},\ex{q}}$.
	Then \vspace{-0.3cm}
	
		\begin{description}
			\item[(i)]\label{Ap,q implica Ap y Aq} 
				$w\in A_{\ex{p}}\cap A_{\ex{q}}$		
			\item[(ii)]\label{Ap,q implica Aq,p} 
				$w\in A_{\ex{q},\ex{p}}$. Moreover, 
				\vspace{-0.2cm}
				
					$$	\frac{\normadefpv{\ca w}{p}}				{\normadefpv{\ca}{p}}
					\frac{\normadefpv{\ca w^{-1}}{q'}}				{\normadefpv{\ca }{q'}}\simeq1.$$
		\end{description}
\end{lema}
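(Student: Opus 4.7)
The plan is to obtain (i) and the upper bound in (ii) directly from the size comparison \eqref{desigualdad promedios}, and then to establish the matching lower bound in the ``moreover'' statement by assembling three $\simeq 1$ equivalences and cancelling algebraically. Since $\ex{p}\le\ex{q}$ forces $\ex{q'}\le\ex{p'}$, two applications of \eqref{desigualdad promedios}---one to $w$, one to $w^{-1}$---yield
\[
\frac{\normadefpv{\ca w}{p}}{\normadefpv{\ca}{p}}\lesssim\frac{\normadefpv{\ca w}{q}}{\normadefpv{\ca}{q}}
\quad\text{and}\quad
\frac{\normadefpv{\ca w^{-1}}{q'}}{\normadefpv{\ca}{q'}}\lesssim\frac{\normadefpv{\ca w^{-1}}{p'}}{\normadefpv{\ca}{p'}}.
\]

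For (i), inserting these one at a time into the hypothesis \eqref{Ap,q} gives $\tfrac{\normadefpv{\ca w}{p}}{\normadefpv{\ca}{p}}\tfrac{\normadefpv{\ca w^{-1}}{p'}}{\normadefpv{\ca}{p'}}\lesssim 1$ and $\tfrac{\normadefpv{\ca w}{q}}{\normadefpv{\ca}{q}}\tfrac{\normadefpv{\ca w^{-1}}{q'}}{\normadefpv{\ca}{q'}}\lesssim 1$, which by Lemma \ref{p en plog} are exactly the $A_{\ex{p}}$ and $A_{\ex{q}}$ conditions \eqref{Ap}. Inserting both comparisons simultaneously produces the $A_{\ex{q},\ex{p}}$ bound asserted in (ii),
\[
\frac{\normadefpv{\ca w}{p}}{\normadefpv{\ca}{p}}\frac{\normadefpv{\ca w^{-1}}{q'}}{\normadefpv{\ca}{q'}}
\lesssim
\frac{\normadefpv{\ca w}{q}}{\normadefpv{\ca}{q}}\frac{\normadefpv{\ca w^{-1}}{p'}}{\normadefpv{\ca}{p'}}
\lesssim 1.
\]

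The main obstacle is the matching lower bound $\gtrsim 1$ in the ``moreover'' part. A direct three-function H\"older argument, analogous to the one used in the remark preceding the lemma for the $A_{\ex{p},\ex{q}}$ equivalence via the splitting $1=1/\ex{q}+1/\ex{\beta}+1/\ex{p'}$, is unavailable here because $1/\ex{p}+1/\ex{q'}=1+1/\ex{\beta}>1$ leaves no room for a positive third exponent. The plan is to circumvent this by combining three $\simeq 1$ relations already in hand: the remark itself gives $\tfrac{\normadefpv{\ca w}{q}}{\normadefpv{\ca}{q}}\tfrac{\normadefpv{\ca w^{-1}}{p'}}{\normadefpv{\ca}{p'}}\simeq 1$, and part (i) together with the standard H\"older inequality \eqref{holderpp}---which supplies the $\gtrsim 1$ direction in any $A_{\ex{r}}$-type quantity via $|Q|=\int_Q w\cdot w^{-1}\lesssim\normadefpv{\ca w}{r}\normadefpv{\ca w^{-1}}{r'}$ and Lemma \ref{p en plog}---yields both $\tfrac{\normadefpv{\ca w}{p}}{\normadefpv{\ca}{p}}\tfrac{\normadefpv{\ca w^{-1}}{p'}}{\normadefpv{\ca}{p'}}\simeq 1$ and $\tfrac{\normadefpv{\ca w}{q}}{\normadefpv{\ca}{q}}\tfrac{\normadefpv{\ca w^{-1}}{q'}}{\normadefpv{\ca}{q'}}\simeq 1$. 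Multiplying the latter two and dividing by the first cancels the $\tfrac{\normadefpv{\ca w}{q}}{\normadefpv{\ca}{q}}$ and $\tfrac{\normadefpv{\ca w^{-1}}{p'}}{\normadefpv{\ca}{p'}}$ factors and leaves precisely $\tfrac{\normadefpv{\ca w}{p}}{\normadefpv{\ca}{p}}\tfrac{\normadefpv{\ca w^{-1}}{q'}}{\normadefpv{\ca}{q'}}\simeq 1$, completing (ii).
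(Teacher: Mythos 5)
Your proof is correct and follows essentially the same route as the paper's: part (i) via the size comparison \eqref{desigualdad promedios} applied to $w$ and to $w^{-1}$, and the $\simeq 1$ in (ii) by combining the two H\"older lower bounds $1\lesssim\frac{\normadefpv{\ca w}{p}}{\normadefpv{\ca}{p}}\frac{\normadefpv{\ca w^{-1}}{p'}}{\normadefpv{\ca}{p'}}$ and $1\lesssim\frac{\normadefpv{\ca w}{q}}{\normadefpv{\ca}{q}}\frac{\normadefpv{\ca w^{-1}}{q'}}{\normadefpv{\ca}{q'}}$ with the $A_{\ex{p},\ex{q}}$ hypothesis and then cancelling (the paper presents this as a single chain of inequalities; you present it as ``multiply two $\simeq 1$'s and divide by the third,'' which is the same algebra). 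Your explicit remark that a direct three-function H\"older splitting is blocked because $1/\ex{p}+1/\ex{q'}>1$ is a useful clarification of why the argument must be indirect, but it does not change the method.
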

	\begin{proof}
		Let us see \textbf{(i)}. Since $\ex{p}\le \ex{q}$, by \eqref{desigualdad promedios} we have
		\vspace{-0.4cm}
				
			\begin{align*}
				\frac{\normadefpv{\ca w}{p}}
				{\normadefpv{\ca}{p}}
				\frac{\normadefpv{\ca w^{-1}}{p'}}
				{\normadefpv{\ca }{p'}}
				&\lesssim 
				\frac{\normadefpv{\ca w}{q}}
				{\normadefpv{\ca}{q}}
				\frac{\normadefpv{\ca w^{-1}}{p'}}
				{\normadefpv{\ca }{p'}}
				\leq [w]_{A_{\ex{p},\ex{q}}}.
			\end{align*}
		In the same way, since $\ex{q'}\le \ex{p'}$, we have
		\vspace{-0.4cm}
		
			\begin{align*}
				\frac{\normadefpv{\ca w}{q}}
				{\normadefpv{\ca}{q}}
				\frac{\normadefpv{\ca w^{-1}}{q'}}
				{\normadefpv{\ca }{q'}}
				&\lesssim 
				\frac{\normadefpv{\ca w}{q}}
				{\normadefpv{\ca}{q}}
				\frac{\normadefpv{\ca w^{-1}}{p'}}
				{\normadefpv{\ca }{p'}}
				\leq [w]_{A_{\ex{p},\ex{q}}}.
			\end{align*}
			
		In order to prove \textbf{(ii)}, by applying H\"{o}lder's inequality \eqref{holderpp} and Lemma \ref{equivalenciabeta} we have
		\vspace{-0.4cm}
		
			\begin{align*}
				1&\lesssim 
				\frac{\normadefpv{\ca w}{p}}
				{\normadefpv{\ca}{p}}
				\frac{\normadefpv{\ca w^{-1}}{p'}}
				{\normadefpv{\ca }{p'}}
				\frac{\normadefpv{\ca w}{q}}
				{\normadefpv{\ca}{q}}
				\frac{\normadefpv{\ca w^{-1}}{q'}}
				{\normadefpv{\ca }{q'}}.
			\end{align*}
			Then, by the assumptions on the weight, \eqref{desigualdad promedios} and \textbf{(i)} we obtain
			\vspace{-0.4cm}
			
			\begin{align*}
				1&\lesssim [w]_{A_{\ex{p},\ex{q}}}
				\frac{\normadefpv{\ca w}{p}}
				{\normadefpv{\ca}{p}}
				\frac{\normadefpv{\ca w^{-1}}{q'}}
				{\normadefpv{\ca }{q'}}\\
				&\lesssim [w]_{A_{\ex{p},\ex{q}}}
				\frac{\normadefpv{\ca w}{q}}
				{\normadefpv{\ca}{q}}
				\frac{\normadefpv{\ca w^{-1}}{q'}}
				{\normadefpv{\ca }{q'}}
				\lesssim [w]_{A_{\ex{p},\ex{q}}}[w]_{A_{\ex{q}}}.\end{align*}\end{proof}
	The following proposition provides us with an ``openness" type property of class $A_{\ex{p},\ex{q}}$.
\begin{propo}\label{apertura Ap,q}
	Let $\ex{p},\ex{q}\inplogdern$ such that $1<p^-\le \ex{p}\le\ex{q}\le q^+<\infty$ and $w\in A_{\ex{p},\ex{q}}$. Then there exist $\ex{u},\ex{v}\inplogdern$ such that $(p/u)^->1$, 
	$(q'/v')^->1$ and $w\in A_{\ex{u},\ex{v}}$.
\end{propo}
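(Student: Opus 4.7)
The plan is to extract openness of the $A_{\ex{p},\ex{q}}$ condition from two independent applications of the $A_\ex{p}$-openness Lemma \ref{s de la apertura clase Ap}, glued together by the symmetric $A_{\ex{q},\ex{p}}$ identity of Lemma \ref{Prop pesos Apq}(ii), with the resulting auxiliary $\|\chi_Q\|$-factors absorbed by Lemma \ref{equivalenciabeta}.

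By Lemma \ref{Prop pesos Apq}(i), $w\in A_\ex{p}\cap A_\ex{q}$, and by duality $w^{-1}\in A_\ex{q'}$. First I would apply Lemma \ref{s de la apertura clase Ap} to each, obtaining $s_2\in(1/p^-,1)$ with $w^{1/s_2}\in A_{s_2\ex{p}}$ and $s_1\in(1/(q')^-,1)$ with $w^{-1/s_1}\in A_{s_1\ex{q'}}$. Expanding each $A$-inequality, pulling the exponents $1/s_i$ out of the Luxemburg norms via Lemma \ref{3.2.6} (allowed since $p^-,(q')^-\ge 1$), and raising to the $s_i$-th power gives
\begin{align*}
\|w\chi_Q\|_\ex{p}\,\|w^{-1}\chi_Q\|_\ex{u'}&\lesssim |Q|^{s_2},\\
\|w^{-1}\chi_Q\|_\ex{q'}\,\|w\chi_Q\|_\ex{v}&\lesssim |Q|^{s_1},
\end{align*}
where $\ex{u}$ and $\ex{v}$ are defined by $1/\ex{u'}=s_2-1/\ex{p}$ and $1/\ex{v}=s_1-1/\ex{q'}$. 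A short computation shows $(p/u)^-=1+p^-(1-s_2)>1$ and $(q'/v')^-=1+(q')^-(1-s_1)>1$, and since $1/\ex{u},1/\ex{v}$ are affine in the log-H\"older functions $1/\ex{p},1/\ex{q}$, both $\ex{u},\ex{v}\inplogdern$.

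Next I would multiply the two opened estimates and invoke Lemma \ref{Prop pesos Apq}(ii), which provides $\|w\chi_Q\|_\ex{p}\|w^{-1}\chi_Q\|_\ex{q'}\simeq\|\chi_Q\|_\ex{p}\|\chi_Q\|_\ex{q'}$. This yields
$$\|w\chi_Q\|_\ex{v}\,\|w^{-1}\chi_Q\|_\ex{u'}\lesssim \frac{|Q|^{s_1+s_2}}{\|\chi_Q\|_\ex{p}\,\|\chi_Q\|_\ex{q'}}.$$
The concluding step is a clean algebraic cancellation: by construction $1/\ex{v}+1/\ex{q'}=s_1$ and $1/\ex{u'}+1/\ex{p}=s_2$ are constant exponents, and both $1/s_1,1/s_2\geq1$ and are pointwise dominated by $\ex{q'},\ex{p}$ respectively, so Lemma \ref{equivalenciabeta} applies and gives $\|\chi_Q\|_\ex{v}\|\chi_Q\|_\ex{q'}\simeq\|\chi_Q\|_{1/s_1}=|Q|^{s_1}$ and $\|\chi_Q\|_\ex{u'}\|\chi_Q\|_\ex{p}\simeq|Q|^{s_2}$. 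Substituting these equivalences collapses the right-hand side above to $\|\chi_Q\|_\ex{v}\|\chi_Q\|_\ex{u'}$, which is exactly the $A_{\ex{u},\ex{v}}$-condition.

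The main delicate point will be the bookkeeping that the parameters $s_1,s_2$ provided by Lemma \ref{s de la apertura clase Ap} simultaneously ensure the pointwise positivity of $s_2\ex{p}-1$ and $s_1\ex{q'}-1$, the strict openness inequalities $(p/u)^->1$ and $(q'/v')^->1$, the log-H\"older regularity of $\ex{u}$ and $\ex{v}$, and the hypotheses of Lemma \ref{equivalenciabeta}; all of these follow from $p^->1$ together with the explicit intervals $s_2\in(1/p^-,1)$, $s_1\in(1/(q')^-,1)$.
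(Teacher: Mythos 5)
Your proposal is correct and follows essentially the same route as the paper: you use Lemma \ref{Prop pesos Apq}\textbf{(i)} to place $w\in A_{\ex{p}}$ and $w^{-1}\in A_{\ex{q'}}$, apply Lemma \ref{s de la apertura clase Ap} to each, define $\ex{u}$ and $\ex{v}$ by the identical formulas, and reassemble through Lemma \ref{Prop pesos Apq}\textbf{(ii)} combined with Lemmas \ref{3.2.6} and \ref{equivalenciabeta}. The only difference is organizational — you carry the unnormalized $|Q|^{s_i}$ bounds to the end and cancel once, whereas the paper normalizes each opened estimate immediately into a $\simeq 1$ form — and you make the final appeal to Lemma \ref{equivalenciabeta} explicit, which the paper leaves implicit in citing Lemma \ref{3.2.6} alone.
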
 

	For the case $\ex{p}\equiv\ex{q}$, this proposition was proved in \cite{CDH}.
\begin{proof}[Proof of Proposition \ref{apertura Ap,q}]
	Since $w\in A_{\ex{p},\ex{q}}$, by Lemma \ref*{Prop pesos Apq}\textbf{(i)}, we have that $w\in A_{\ex{p}}$. 
	Similarly, since $w^{-1}\in A_{\ex{q'},\ex{p'}}$, $w^{-1}\in A_{\ex{q'}}$. 
	Then by Lemma \ref{s de la apertura clase Ap}, since $p^->1$ and $(q')^->1$, there exist two constants $s\in(1/p^-,1)$ and $r\in(1/(q')^-,1)$ such that 
		\begin{equation}\label{equ}
		w^{1/s}\in A_{s\ex{p}}\esp\text{and}\esp w^{-1/r}\in A_{r\ex{q'}}.
		\end{equation}
	We denote $\ex{u'}=\frac{1}{s}{(s\ex{p})'}$ and $\ex{v}=\frac{1}{r}{(r\ex{q'})'}$.
	Note that
	\vspace{-0.2cm}
	 
		$$\frac{\ex{p}}{\ex{u}}=\ex{p}(1-s)+1\geq (p)^-(1-s)+1$$
		\vspace{-0.2cm}
	and 
	\vspace{-0.2cm}
	
		$$\frac{\ex{q'}}{\ex{v'}}=\ex{q'}(1-r)+1\geq (q')^-(1-r)+1,$$
	so that $(p/u)^->1$ and $(q'/v')^->1$.
	By \eqref{equ} and Lemma \ref{3.2.6}, we have that 
	\vspace{-0.2cm}
	
		$$\frac{\normadefpv{\ca w}{p}}
		{\normadefpv{\ca}{p}}
		\frac{\normadefpv{\ca w^{-1}}{u'}}
		{\normadefpv{\ca }{u'}}
		\simeq 1
		\esp\text{and}\esp
		\frac{\normadefpv{\ca w}{v}}
		{\normadefpv{\ca}{v}}
		\frac{\normadefpv{\ca w^{-1}}{q'}}
		{\normadefpv{\ca }{q'}}
		\simeq 1
		$$
	respectively.
	Thus, by Lemma \ref*{Prop pesos Apq}\textbf{(ii)} we obtain $w\in A_{\ex{u},\ex{v}}$.
\end{proof}

	We can now proceed with the proof of Proposition \ref{sparse fraccionario en lp variable con peso variable}. 
		
\begin{proof}[Proof of Proposition~\ref{sparse fraccionario en lp variable con peso variable}]		
	By duality and since $\mathcal{S}$ is sparse we have
	\vspace{-0.4cm}
	
	\begin{align*}
	\normadeflpw{\mathcal{I}^{\ex{\beta}}_{\mathcal{S}}f}{q}{w}
	&=\sup_{\normadeflpw{g}{q'}{{w^{-1}}}\leq1}
	\int_{\z}g(x)\mathcal{I}^{\ex{\beta}}_{\mathcal{S}}\hspace{-0.1cm}f(x)\,dx
	=\sup_{\normadeflpw{g}{q'}{{w^{-1}}}\leq1}
	\sum_{Q\in \mathcal{S}} |Q|\normadefpv{\ca}{\beta}f_Q\,g_Q\\
	&\lesssim\sup_{\normadeflpw{g}{q'}{{w^{-1}}}\leq1}
	\sum_{Q\in \mathcal{S}} |E(Q)|\normadefpv{\ca}{\beta}f_Q\,g_Q.
	\end{align*}
	Let $\ex{u},\ex{v}$ the exponents provided by Proposition \ref{apertura Ap,q}.
	Then by H\"{o}lder's inequality and Lemma \ref{p en plog} we obtain 	
	\vspace{-0.7cm}
	
	\begin{align*}
	\normadeflpw{\mathcal{I}^{\ex{\beta}}_{\mathcal{S}}f}{q}{w}
	&\lesssim\sup_{\normadeflpw{g}{q'}{{w^{-1}}}\leq1}
	\sum_{Q\in\mathcal{S}}|E(Q)|
	{\normadefpv{\ca}{\beta}}
	\frac{\normadefpv{\ca f w}{u}}
	{\normadefpv{\ca}{u}}
	\frac{\normadefpv{\ca w^{-1}}{u'}}
	{\normadefpv{\ca }{u'}}
	\frac{\normadefpv{\ca gw^{-1}}{v'}}
	{\normadefpv{\ca}{v'}}
	\frac{\normadefpv{\ca w}{v}}
	{\normadefpv{\ca }{v}}
	\\
	&\lesssim[w]_{A_{\ex{u},\ex{v}}}\sup_{\normadeflpw{g}{q'}{{w^{-1}}}\leq1}
	\sum_{Q\in\mathcal{S}}|E(Q)|
	{\normadefpv{\ca}{\beta}}
	\frac{\normadefpv{\ca f w}{u}}
	{\normadefpv{\ca}{u}}
	\frac{\normadefpv{\ca gw^{-1}}{v'}}
	{\normadefpv{\ca}{v'}}
	\\
	&\lesssim\sup_{\normadeflpw{g}{q'}{{w^{-1}}}\leq1}
	\int_{\z} M_{\ex{\beta},L^{\ex{u}}}(fw)(x)M_{L^{\ex{v'}}}(gw^{-1})(x)\,dx\\
	&\lesssim\sup_{\normadeflpw{g}{q'}{{w^{-1}}}\leq1}
	\normadefpv{M_{\ex{\beta},L^{\ex{u}}}(fw)}{q}\normadefpv{M_{L^{\ex{v'}}}(gw^{-1})}{q'}\\
	&\lesssim\sup_{\normadeflpw{g}{q'}{{w^{-1}}}\leq1}
	\normadefpv{fw}{p}\normadefpv{gw^{-1}}{q'}
	\le \normadeflpw{f}{p}{w}
	\end{align*}
	where we have used that, by Theorem \ref{teo logl}, $M_{L^{\ex{v'}}}:\lpr{q'}\hookrightarrow\lpr{q'}$ and by Theorem \ref{teo max llogl}, $\mbetas{\beta}{L^{\ex{u}}}:\lpr{p}\hookrightarrow\lpr{q}$.
\end{proof}

	The following lemma will be useful in the proof of the Theorem \ref{teo simb lips bloom}. 
	\begin{lema}\label{factores en Apq implica factores en Apq}
		Let $\ex{p},\ex{q}\in\mathcal{P}(\z)$, $\mu,\lambda\in A_{\ex{p},\ex{q}}$ and $\nu={\mu}/{\lambda}$. 
		Then $\lambda\nu^{\frac{m-h}{m}}\in A_{\ex{p},\ex{q}}$ for every $m\in\zN$ and for each $h=0,1,...,m$.
	\end{lema}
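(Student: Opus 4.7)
The plan is to rewrite the weight in the statement as a convex combination of $\lambda$ and $\mu$ and then exploit the fact that the class $A_{\ex{p},\ex{q}}$ is stable under such products. Since $\nu=\mu/\lambda$, a direct computation gives
$$\lambda\,\nu^{(m-h)/m}=\lambda^{h/m}\mu^{(m-h)/m}=\lambda^{1-\theta}\mu^{\theta},$$
where $\theta=(m-h)/m\in[0,1]$. Hence it suffices to prove that $\lambda^{1-\theta}\mu^{\theta}\in A_{\ex{p},\ex{q}}$ whenever $\lambda,\mu\in A_{\ex{p},\ex{q}}$ and $\theta\in[0,1]$. The endpoints $\theta=0$ and $\theta=1$ are immediate, so from here on we may assume $\theta\in(0,1)$.

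First I would apply the variable-exponent H\"older inequality \eqref{holderrpq} with the pair of exponents $\ex{q}/(1-\theta)$ and $\ex{q}/\theta$, both in $\mathcal{P}(\z)$, which satisfy the conjugacy relation $1/\ex{q}=(1-\theta)/\ex{q}+\theta/\ex{q}$, to obtain
$$\left\Vert\ca\lambda^{1-\theta}\mu^{\theta}\right\Vert_{\ex{q}}\lesssim \left\Vert\ca\lambda^{1-\theta}\right\Vert_{\ex{q}/(1-\theta)}\left\Vert\ca\mu^{\theta}\right\Vert_{\ex{q}/\theta}.$$
Using $\ca^{s}=\ca$ for every $s>0$ and invoking Lemma \ref{3.2.6} (the hypothesis on $s$ reduces to $q^{-}\ge 1$, which is automatic), each factor on the right hand side equals $\normadefpv{\ca\lambda}{q}^{1-\theta}$ and $\normadefpv{\ca\mu}{q}^{\theta}$, respectively. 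The same reasoning applied on the dual side, using the pair $\ex{p'}/(1-\theta)$ and $\ex{p'}/\theta$, yields
$$\left\Vert\ca\lambda^{-(1-\theta)}\mu^{-\theta}\right\Vert_{\ex{p'}}\lesssim \normadefpv{\ca\lambda^{-1}}{p'}^{1-\theta}\normadefpv{\ca\mu^{-1}}{p'}^{\theta}.$$

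Finally, writing $\normadefpv{\ca}{q}=\normadefpv{\ca}{q}^{1-\theta}\normadefpv{\ca}{q}^{\theta}$ and similarly for $\normadefpv{\ca}{p'}$, and grouping the resulting factors by exponent, the expression in \eqref{Ap,q} for the weight $\lambda^{1-\theta}\mu^{\theta}$ factors as
$$\left(\frac{\normadefpv{\ca\lambda}{q}}{\normadefpv{\ca}{q}}\frac{\normadefpv{\ca\lambda^{-1}}{p'}}{\normadefpv{\ca}{p'}}\right)^{1-\theta}\left(\frac{\normadefpv{\ca\mu}{q}}{\normadefpv{\ca}{q}}\frac{\normadefpv{\ca\mu^{-1}}{p'}}{\normadefpv{\ca}{p'}}\right)^{\theta}\lesssim [\lambda]_{A_{\ex{p},\ex{q}}}^{1-\theta}[\mu]_{A_{\ex{p},\ex{q}}}^{\theta},$$
uniformly in $Q$, which is precisely the $A_{\ex{p},\ex{q}}$ condition for $\lambda^{1-\theta}\mu^{\theta}$. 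I do not foresee a genuine obstacle in the argument; the only point requiring care is verifying that the shifted exponents $\ex{q}/(1-\theta)$, $\ex{q}/\theta$, $\ex{p'}/(1-\theta)$, $\ex{p'}/\theta$ lie in $\mathcal{P}(\z)$ and satisfy the hypotheses of H\"older's inequality and Lemma \ref{3.2.6}, which is automatic from $\theta\in(0,1)$ and $\ex{p},\ex{q}\in\mathcal{P}(\z)$.
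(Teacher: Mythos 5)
Your proof is correct and follows essentially the same route as the paper: rewrite $\lambda\nu^{(m-h)/m}=\lambda^{h/m}\mu^{(m-h)/m}$, split each norm via the variable Hölder inequality \eqref{holderrpq} with the two auxiliary exponents (your $\ex{q}/(1-\theta)$, $\ex{q}/\theta$ are exactly the paper's $\frac{m\ex{q}}{h}$, $\frac{m\ex{q}}{m-h}$), apply Lemma \ref{3.2.6} to pull the powers out as $\normadefpv{\ca\lambda}{q}^{1-\theta}\normadefpv{\ca\mu}{q}^{\theta}$, and conclude with the $A_{\ex{p},\ex{q}}$ bounds for $\lambda$ and $\mu$. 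The only cosmetic difference is the $\theta=(m-h)/m$ notation and your explicit flagging of the endpoint cases $\theta\in\{0,1\}$, which the paper handles implicitly.
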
 \vspace{-0.5cm}

	Let $m\in\zN$ y $k\in\{1,2,...,m\}$.
	Note that, if $\ex{p},\ex{q}\in\mathcal{P}(\z)$ be such that $\ex{p}\le\ex{q}$, $\mu,\lambda\in A_{\ex{p},\ex{q}}$, $\nu={\mu}/{\lambda}$
	and we denote $\eta=\nu^{1/m}$ we have
		\begin{equation}\label{op sparse con norma q y pesos }
		\normadeflpw{(\mathcal{A}_\mathcal{S})^{k}_{\eta}F}{q}{{\lambda}}
		\lesssim \normadeflpw{f}{q}{{\lambda \eta^{k}}}, \esp \forall\,F\in\lplocc{1}
		\end{equation}
	and
		\begin{equation}\label{op sparse con norma p y pesos }
		\normadeflpw{(\mathcal{A}_\mathcal{S})^{k}_{\eta}F}{p}{{\lambda\eta^{m-k}}}
		\lesssim \normadeflpw{f}{p}{{\lambda \eta^{m}}}, \esp \forall\,F\in\lplocc{1}.
		\end{equation}
%
	\begin{proof}[Proof of Lemma \ref{factores en Apq implica factores en Apq}]
		By H\"{o}lder's inequality \eqref{holderrpq} and Lemma \ref{3.2.6}, we have
		\vspace{-0.2cm}
		
		\begin{align*}
		&\frac{\normadefpv{\ca \lambda\nu^{\frac{m-h}{m}}}{q}}{\normadefpv{\ca}{q}}
		\frac{\normadefpv{\ca \lambda^{-1}\nu^{-\frac{m-h}{m}}}{p'}}{\normadefpv{\ca}{p'}}	=\frac{\normadefpv{\ca \lambda^{\frac{h}{m}}\mu^{\frac{m-h}{m}}}{q}}{\normadefpv{\ca}{q}}
		\frac{\normadefpv{\ca \lambda^{-\frac{h}{m}}\mu^{-\frac{m-h}{m}}}{p'}}{\normadefpv{\ca}{p'}}\\
		&\quad\lesssim
		\frac{\normadefp{\ca\lambda^{\frac{h}{m}}}{\frac{m\ex{q}}{h}}}
			{\normadefp{\ca}{\ex{q}}^{\frac{h}{m}}}
		\frac{\normadefp{\ca \mu^{\frac{m-h}{m}}}{\frac{m\ex{q}}{m-h}}}
			{\normadefp{\ca}{\ex{q}}^{\frac{m-h}{m}}}
		\frac{\normadefp{\ca \lambda^{-\frac{h}{m}}}{\frac{m\ex{p'}}{h}}}
			{\normadefp{\ca}{\ex{p'}}^{\frac{h}{m}}}
		\frac{\normadefp{\ca \mu^{-\frac{m-h}{m}}}{\frac{m\ex{p'}}{m-h}}}
			{\normadefp{\ca}{\ex{p'}}^{\frac{m-h}{m}}}\\
		&\quad=\left(\frac{\normadefpv{\ca \lambda}{q}}{\normadefpv{\ca}{q}}\right)^{\frac{h}{m}}
		\left(\frac{\normadefpv{\ca \mu}{q}}{\normadefpv{\ca}{q}}\right)^{\frac{m-h}{m}}
		\left(\frac{\normadefpv{\ca \lambda^{-1}}{p'}}{\normadefpv{\ca}{p'}}\right)^{\frac{h}{m}}
		\left(\frac{\normadefpv{\ca \mu^{-1}}{p'}}{\normadefpv{\ca}{p'}}\right)^{\frac{m-h}{m}}\\
		&\quad\leq [\lambda]_{A_{\ex{p},\ex{q}}}^{\frac{h}{m}}[\mu]_{A_{\ex{p},\ex{q}}}^{\frac{m-h}{m}}.
		\end{align*}
	\end{proof}		
	
\section{Proofs of main results}
\begin{proof}[Proof of Theorem~\ref{conmutador simbolo lip general}]
	Since $v\in \lploc{p}$ implies that the set of bounded functions with compact support is dense in $\lpw{p}{v}(\z)$ and taking into account Theorem \ref{est puntual del conmutador de orden m}, it is enough to show that for a sparse family $\mathcal{S}$, 
	\vspace{-0.2cm}
	
	$$\normadeflpw{\mathcal{A}_{{\mathcal{S}}}^{m,h}(b,f)}{p}{w}
	\lesssim \normadeflpw{f}{p}{v}\esp h\in\{0,1,...,m\}$$
	for each nonnegative bounded function with compact support $f$. 
	Let $h\in\{0,1,...,m\}$, by duality
	
	\begin{align}\label{R2}
		\normadeflpw{\mathcal{A}_{{\mathcal{S}}}^{m,h}(b,f)}{p}{w}
		&\lesssim\sup_{\normadeflpw{g}{p'}{{w^{-1}}}\leq1}
		\int_{\z}g(x)\mathcal{A}_{{\mathcal{S}}}^{m,h}(b,f)(x)\,dx 
		\nonumber\\
		&=\sup_{\normadeflpw{g}{p'}{{w^{-1}}}\leq1}
		\sum_{Q\in \mathcal{S}}
		|Q|
		\frac{1}{|Q|}\int_Q|b(x)-b_{Q}|^{h}f(x)\,dx\,
		\frac{1}{|Q|}\int_Q|b(x)-b_{Q}|^{m-h}g(x)\,dx. 
	\end{align}	
	Let us denote $\ex{s}= R\ex{p'}$ and $\ex{l}=S\ex{p}$.
	Since $(p')^+<R(p')^-$ and $p^+<Sp^-$, $(s')^+<p^-$ and $(l')^+<(p^+)'$ then, we can take two constants $\mathrm{A},\mathrm{B}$ such that
	\vspace{-0.4cm}
	
	$$(s')^+< \mathrm{A} <p^-
	\esp\esp\texto{ and }\esp\esp
	(l')^+<\mathrm{B}<(p^+)',$$
	and $\omega(\cdot),\tau(\cdot)$ defined by
	\vspace{-0.3cm}
	
	$$\frac{1}{\omega(\cdot)}=\frac{1}{s(\cdot)}+\frac{1}{\mathrm{A}}
	\esp\esp\texto{ and }\esp\esp
	\frac{1}{\tau(\cdot)}=\frac{1}{l(\cdot)}+\frac{1}{\mathrm{B}}.$$
	Observe that $\ex{\omega},\ex{\tau}\inplogdern$ since $\ex{s},\ex{l}\inplogdern$.
	
	On the other hand note that, for $k$ an integer, an exponent $\ex{r}\inplogdern$ with $1<r^-\le r^+<\infty$ and $H\in\lplocc{1}$, by H\"{o}lder's inequality \eqref{holderpp}, Lemmas \ref{p en plog} and \ref{Izuki con lipschitz general}, we have 
	\vspace{-0.3cm}
	
		\begin{align*}
			\frac{1}{|Q|}\int_Q |b(y)-b_Q|^kH(y)\,dy 
			&\lesssim \frac{\normadefpv{\ca|b-b_Q|^k}{r'}}{\normadefpv{\ca}{r'}}
			\frac{\normadefpv{\ca H}{r}}{\normadefpv{\ca}{r}}
			\lesssim a(Q)^k\normadefp{b}{\zLL_a}^k
			\frac{\normadefpv{\ca H}{r}}{\normadefpv{\ca}{r}}.
		\end{align*}
	
	Thus, by \eqref{R2}, we have
	\vspace{-0.4cm}
	
	\begin{align*}
	\normadeflpw{\mathcal{A}_{{\mathcal{S}}}^{m,h}(b,f)}{p}{w}
	&\lesssim\normadefp{b}{\zLL_a}^m
	\sup_{\normadeflpw{g}{p'}{{w^{-1}}}\leq1}
	\sum_{Q\in \mathcal{S}}|Q|
	a(Q)^m
	\frac{\normadefpv{\car_{Q}f}{\omega}}
	{\normadefpv{\car_{Q}}{\omega}}
	\frac{\normadefpv{\car_{Q}g}{\tau}}
	{\normadefpv{\car_{Q}}{\tau}}.
	\end{align*}
	Using that $\mathcal{S}$ is a sparse family and H\"{o}lder's inequality \eqref{holderrpq},  Lemma \ref{p en plog} and the hypothesis on the weights we obtain that
	\vspace{-0.6cm}
	
	\begin{align*}
	&\normadeflpw{\mathcal{A}_{{\mathcal{S}}}^{m,h}(b,f)}{p}{w}\\
	&\quad\quad\lesssim\normadefp{b}{\zLL_a}^m
	\sup_{\normadeflpw{g}{p'}{{w^{-1}}}\leq1}
	\sum_{Q\in S}|E(Q)|
	a(Q)^m
	\frac{\normadeflpl{\car_{Q}fv}{\mathrm{A}}}{\normadeflpl{\car_{Q}}{\mathrm{A}}}
	\frac{\normadefpv{\car_{Q}v^{-1}}{s}}{\normadefpv{\car_{Q}}{s}}
	\frac{\normadeflpl{\car_{Q}gw^{-1}}{\mathrm{B}}}
	{\normadeflpl{\car_{Q}}{\mathrm{B}}}
	\frac{\normadefpv{\car_{Q}w}{l}}{\normadefpv{\car_{Q}}{l}}\\
	&\quad\quad\lesssim \normadefp{b}{\zLL_a}^m
	\sup_{\normadeflpw{g}{p'}{{w^{-1}}}\leq1}
	\int_{\z}M_{L^\mathrm{A}}(fv)(x)M_{L^\mathrm{B}}(gw^{-1})(x)\,dx\\
	&\quad\quad\lesssim 
	\normadefp{b}{\zLL^1_a}^m
	\sup_{\normadeflpw{g}{p'}{{w^{-1}}}\leq1}
	\normadefpv{M_{L^\mathrm{A}}(fv)}{p}
	\normadefpv{M_{L^\mathrm{B}}(gw^{-1})}{p'}\\
	&\quad\quad\lesssim \normadefp{b}{\zLL_a}^m
	\sup_{\normadeflpw{g}{p'}{{w^{-1}}}\leq1}
	\normadefpv{fv}{p}\normadefpv{gw^{-1}}{p'}
	\leq \normadefp{b}{\zLL_a}^m \normadeflpw{f}{p}{v}		
	\end{align*}
	where we have used that by Theorem \ref{teo logl},  $M_{L^\mathrm{A}}:\lpr{p}\hookrightarrow\lpr{p}$ since $\mathrm{A}<p^-$ and $M_{L^\mathrm{B}}:\lpr{p'}\hookrightarrow\lpr{p'}$ since $\mathrm{B}<(p')^-$.	
\end{proof}	

\begin{proof}[Proof of Theorem~\ref{conmutador simbolo lip variable}]
	As before it suffices to provide suitable estimates for 
	\vspace{-0.6cm}
	
	\begin{align*}
	\sum_{Q\in \mathcal{S}}
	|Q|
	(|b-b_{Q}|^{h}f)_{Q}
	(|b-b_{Q}|^{m-h}g)_{Q}, \esp h\in\{0,1,...,m\} 
	\end{align*}
	for each nonnegative bounded function with compact support $f$ and each $g$ with $\normadeflpw{g}{p'}{{w^{-1}}}\leq1$,	where $\mathcal{S}$ is a sparse family. Note that, for $k$ an non negative integer and $H\in\lplocc{1}$, by Lemma \ref{obs teorema delta variable} we have 
	\vspace{-0.9cm}
	
		\begin{align*}		
			\frac{1}{|Q|}\int_Q 	|b(x)-b_Q|^kH(x)\,dx 
			&\lesssim  \normadefp{\ca}{n/\ex{\delta}}^k H_Q. 
		\end{align*}	
	Then we have
	\begin{align}\label{9}
	\sum_{Q\in \mathcal{S}}
	|Q|
	(|b-b_{Q}|^{h}f)_{Q}
	(|b-b_{Q}|^{m-h}g)_{Q}
	\lesssim 
	\sum_{Q\in \mathcal{S}}|Q|
	\normadefp{\car_{Q}}{n/\ex{\delta}}^m
	f_{Q}\,g_{Q}.
	\end{align}
	By condition $\mathcal{F}$ and H\"{o}lder's inequalities \eqref{holdermusie} and \eqref{holdermusie2} we have
	\vspace{-0.5cm}
	
	\begin{align*}
	f_{Q}
	&\lesssim
	\frac{\normadeflpl{\car_{Q} f}{\mathsf{D}}}
	{\normadeflpl{\car_{ Q}}{\mathsf{D}}}
	\frac{\normadeflpl{\car_{Q}}{\mathsf{D}^*}}
	{\normadeflpl{\car_{ Q}}{\mathsf{D}^*}}
	\lesssim
	\frac{\normadeflpl{\car_{ Q} fv}{\mathsf{B}}}
	{\normadeflpl{\car_{ Q}}{\mathsf{B}}}
	\frac{\normadeflpl{\car_{Q} v^{-1}}{\mathsf{A}}}
	{\normadeflpl{\car_{ Q}}{\mathsf{A}}}
	\end{align*}
	and
	\vspace{-0.5cm}
	
	\begin{align*}
	g_{Q}
	&\lesssim
	\frac{\normadeflpl{\car_{Q} g}{\mathsf{J}}}
	{\normadeflpl{\car_{ Q}}{\mathsf{J}}}
	\frac{\normadeflpl{\car_{Q}}{\mathsf{J}^*}}
	{\normadeflpl{\car_{ Q}}{\mathsf{J}^*}}
	\lesssim
	\frac{\normadeflpl{\car_{Q} gw^{-1}}{\mathsf{H}}}
	{\normadeflpl{\car_{ Q}}{\mathsf{H}}}
	\frac{\normadeflpl{\car_{Q} w}{\mathsf{E}}}
	{\normadeflpl{\car_{Q}}{\mathsf{E}}}.
	\end{align*}
	Then from (\ref{9}), since $\mathcal{S}$ is sparse, by  the hypothesis on the weights and \eqref{hipotesis maximal} we have 
	\vspace{-0.6cm}
			
	\begin{align*}
	&\sum_{Q\in \mathcal{S}}
	|Q|
	(|b-b_{Q}|^{h}f)_{Q}
	(|b-b_{Q}|^{m-h}g)_{Q}\\
	&\quad\quad\lesssim
	\sum_{Q\in \mathcal{S}}|E(Q)|
	\normadefp{\car_{Q}}{n/\ex{\delta}}^m
	\frac{\normadeflpl{\car_{ Q} fv}{\mathsf{B}}}
	{\normadeflpl{\car_{ Q}}{\mathsf{B}}}			\frac{\normadeflpl{\car_{Q} v^{-1}}{\mathsf{A}}}
	{\normadeflpl{\car_{ Q}}{\mathsf{A}}}				\frac{\normadeflpl{\car_{Q} gw^{-1}}{\mathsf{H}}}
	{\normadeflpl{\car_{ Q}}{\mathsf{H}}}				\frac{\normadeflpl{\car_{Q} w}{\mathsf{E}}}				{\normadeflpl{\car_{Q}}{\mathsf{E}}}\\				&\quad\quad\lesssim				\int_{\z}M_{\mathsf{B}(\cdot,L)}(fv)(x)
	M_{\mathsf{H}(\cdot,L)}(gw^{-1})(x)\,dx
	\lesssim 			\normadefpv{M_{\mathsf{B}(\cdot,L)}(fv)}{p}\normadefpv{M_{\mathsf{H}(\cdot,L)}(gw^{-1})}{p'}\\
	&\quad\quad\lesssim
	\normadefpv{fv}{p}\normadefpv{gw^{-1}}{p'}
	\lesssim \normadeflpw{f}{p}{v}.
	\end{align*}
\end{proof}

\begin{proof}[Proof of Theorem~\ref{teo simb lips bloom}]
	Taking into account Theorem \ref{est puntual del conmutador de orden m}, it suffices to prove the estimate for the sparse operators
	\vspace{-0.6cm}
	
		\begin{align*}
		\mathcal{A}^{m,h}_{\mathcal{S}}(b,f)(x)
		&=\sum_{Q\in \mathcal{S}}
		|b(x)-b_{Q}|^{m-h}
		(|b-b_{Q}|^{h}|f|)_{Q}\cdot\ca(x), \esp h\in\{0,1,...,m\},
		\end{align*}
		for each nonnegative bounded function with compact support $f$. By duality we have \vspace{-0.6cm}
	
		\begin{align*}\normadeflpw{\mathcal{A}_{{\mathcal{S}}}^{m,h}(b,f)}{q}{\lambda}
		&\lesssim\sup_{\normadefpv{g}{q'}\leq1}
		\int_{\z}\lambda(x)g(x)\mathcal{A}_{{\mathcal{S}}}^{m,h}(b,f)(x)\,dx \nonumber\\
		&=\sup_{\normadefpv{g}{q'}\leq1}
		\sum_{Q\in \mathcal{S}}(|b-b_{Q}|^{h}|f|)_{Q}\int_{Q}|b(x)-b_{Q}|^{m-h}\lambda(x)g(x)\,dx. 
		\end{align*}
	Let $\tilde{\mathcal{S}}$ be the sparse family provided by Proposition \ref{integral de b por f menor q la integral del operador sparse} and $Q\in\tilde{\mathcal{S}}$. 
	Then, by this proposition and denoting $\eta=\nu^{1/m}$, we have
	\vspace{-0.6cm}
	
		\begin{align*}
			\normadeflpw{\mathcal{A}_{{\mathcal{S}}}^{m,h}(b,f)}{q}{\lambda}
			&\lesssim\sup_{\normadefpv{g}{q'}\leq1}
			\normadefp{b}{BMO^{\ex{\delta}}_\eta}^m
			\sum_{Q\in \tilde{\mathcal{S}}}\frac{\normadefpv{\ca}{n/\delta}^m}{|Q|}\int_{Q} (\mathcal{A}_{\tilde{\mathcal{S}}})^h_\eta f(x)\,dx
			\int_{Q} (\mathcal{A}_{\tilde{\mathcal{S}}})^{m-h}_\eta (\lambda g)(x)\,dx.
		\end{align*}
	Noting that $\normadefpv{\ca}{n/\delta}^m=\normadefpv{\ca}{n/m\delta}$, denoting $\ex{\beta}=n/m\ex{\delta}$ and $\mathcal{A}_{\tilde{\mathcal{S}}}=\mathcal{A}$, we have
	\vspace{-0.6cm}
	
	\begin{align}\label{misma estimacion}
		\normadeflpw{\mathcal{A}_{{\mathcal{S}}}^{m,h}(b,f)}{q}{\lambda}
		&\lesssim\sup_{\normadefpv{g}{q'}\leq1}
			\normadefp{b}{BMO^{\ex{\delta}}_\eta}^m
			\sum_{Q\in \tilde{\mathcal{S}}}
			\frac{\normadefpv{\ca}{\beta}}{|Q|}
			\int_{Q} \mathcal{A}^h_\eta f(x)\,dx
			\int_{Q} \mathcal{A}^{m-h}_\eta (\lambda g)(x)\,dx\\
		&=\sup_{\normadefpv{g}{q'}\leq1}
			\normadefp{b}{BMO^{\ex{\delta}}_\eta}^m
			\int_{\z}\left(\sum_{Q\in \tilde{\mathcal{S}}}
			\normadefpv{\ca}{\beta}
			(\mathcal{A}^h_\eta f)_Q\cdot\ca(x)\right) \mathcal{A}^{m-h}_\eta (\lambda g)(x)\,dx\nonumber\\
		&=\sup_{\normadefpv{g}{q'}\leq1}
			\normadefp{b}{BMO^{\ex{\delta}}_\eta}^m
			\int_{\z}\mathcal{I}_{\tilde{\mathcal{S}}}^{\ex{\beta}}
			\left(\mathcal{A}^h_\eta f\right)\hspace{-1mm}(x)\,\,
			\mathcal{A}^{m-h}_\eta (\lambda g)(x)\,dx.\nonumber
	\end{align}
	Using $m-h$ times that $\mathcal{A}$ is self-adjoint, we have 
	\vspace{-0.4cm}
	
	\begin{align*}
		\int_{\z}\mathcal{I}^{\ex{\beta}}_{\tilde{\mathcal{S}}}
			\left(\mathcal{A}^h_\eta f\right)\hspace{-1mm}(x) \,\,
			\mathcal{A}^{m-h}_\eta (\lambda g)(x)\,dx
			=\int_{\z}
			\mathcal{A}
			\left(\mathcal{A}^{m-h-1}_\eta
			\left[\left(\mathcal{I}^{\ex{\beta}}_{\tilde{\mathcal{S}}}\right)_\eta\hspace{-1mm}\left(\mathcal{A}^h_\eta f\right)\right]\right)\hspace{-1mm}(x)  
			\,\lambda(x) g(x)\,dx.
	\end{align*}
	Combining the preceding estimates and H\"{o}lder's inequality, we obtain that 
	\vspace{-0.5cm}
		
	\begin{align*}
		\normadeflpw{\mathcal{A}_{{\mathcal{S}}}^{m,h}(b,f)}{q}{\lambda}
		&\lesssim \normadefp{b}{BMO^{\ex{\delta}}_\eta}^m \sup_{\normadeflp{g}{q'}\leq1}
		\int_{\z}\mathcal{A}
		\left(\mathcal{A}^{m-h-1}_\eta
		\left[\left(\mathcal{I}^{\ex{\beta}}_{\tilde{\mathcal{S}}}\right)_\eta\hspace{-1mm}\left(\mathcal{A}^h_\eta f\right)\right]\right)\hspace{-1mm}(x)  
		\,\lambda(x) g(x)\,dx\\ 
		&\lesssim \normadefp{b}{BMO^{\ex{\delta}}_\eta}^m
		\sup_{\normadeflp{g}{q'}\leq1}
		\normadefpv{\mathcal{A}
			\left(\mathcal{A}^{m-h-1}_\eta
			\left[\left(\mathcal{I}^{\ex{\beta}}_{\tilde{\mathcal{S}}}\right)_\eta\hspace{-1mm}\left(\mathcal{A}^h_\eta f\right)\right]\right)
			\lambda}{q}
		\normadefpv{g}{q'}\\ 
		&\lesssim \normadefp{b}{BMO^{\ex{\delta}}_\eta}^m
		\normadeflpw{\mathcal{A}
			\left(\mathcal{A}^{m-h-1}_\eta
			\left[\left(\mathcal{I}^{\ex{\beta}}_{\tilde{\mathcal{S}}}\right)_\eta\hspace{-1mm}\left(\mathcal{A}^h_\eta f\right)\right]\right)
			}{q}{\lambda}\\
		&\lesssim \normadefp{b}{BMO^{\ex{\delta}}_\eta}^m
		\normadeflpw{\mathcal{A}^{m-h-1}_\eta
			\left[\left(\mathcal{I}^{\ex{\beta}}_{\tilde{\mathcal{S}}}\right)_\eta\hspace{-1mm}\left(\mathcal{A}^h_\eta f\right)\right]
		}{q}{\lambda}
	\end{align*}
	where we have used that $\lambda\in A_{\ex{q}}$ and \eqref{sparse en lp variable con peso variable}. By inequality \eqref{op sparse con norma q y pesos } we obtain
	\vspace{-0.5cm}
	
	\begin{align*}
		\normadeflpw{\mathcal{A}_{{\mathcal{S}}}^{m,h}(b,f)}{q}{\lambda}
		&\lesssim \normadefp{b}{BMO^{\ex{\delta}}_\eta}^m
		\normadeflpw{\left(\mathcal{I}^{\ex{\beta}}_{\tilde{\mathcal{S}}}\right)_\eta\hspace{-1mm}\left(\mathcal{A}^h_\eta f\right)}{q}{{\lambda\eta^{m-h-1}}}
		= \normadefp{b}{BMO^{\ex{\delta}}_\eta}^m
		\normadeflpw{\mathcal{I}^{\ex{\beta}}_{\tilde{\mathcal{S}}}\hspace{-1mm}\left(\mathcal{A}^h_\eta f\right)}
		{q}{{\lambda\eta^{m-h}}}.
	\end{align*}
	Using Lemma \ref{factores en Apq implica factores en Apq} and Proposition \ref{sparse fraccionario en lp variable con peso variable} we obtain that
	\vspace{-0.5cm}
	
	\begin{align*}
		\normadeflpw{\mathcal{A}_{{\mathcal{S}}}^{m,h}(b,f)}{q}{\lambda}
		&\lesssim \normadefp{b}{BMO^{\ex{\delta}}_\eta}^m
		\normadeflpw{\mathcal{A}^h_\eta f}{p}{{\lambda\eta^{m-h}}},
	\end{align*}
	and applying \eqref{op sparse con norma p y pesos }, we conclude that 
	\vspace{-0.8cm}
	
	\begin{align*}	
		\normadeflpw{\mathcal{A}_{{\mathcal{S}}}^{m,h}(b,f)}{q}{\lambda}	
		&\lesssim \normadefp{b}{BMO^{\ex{\delta}}_\eta}^m	\normadeflpw{f}{p}{{\lambda\eta^{m}}}	
		=\normadefp{b}{BMO^{\ex{\delta}}_\eta}^m
		\normadeflpw{f}{p}{{\mu}}.
	\end{align*}		
\end{proof}

\begin{proof}[Proof of Theorem \ref{conmutador integral fraccionaria en lpv con simb bloom}]
	We first consider $b\in\lplocc{m}$.
	Since $\mu\in \lploc{p}$ and taking into account Theorem \ref{est puntual del conmutador de la int fracc de orden m}, it is enough to show that, for a sparse family $\mathcal{S}$, 
	\vspace{-0.3cm}
	
	$$\normadeflpw{\mathcal{A}_{{\mathcal{S},\alpha}}^{m,h}(b,f)}{q}{\lambda}\lesssim \normadeflpw{f}{p}{\mu}, \esp h\in\{0,1,...,m\},$$
	holds for each nonnegative bounded function with compact support $f$.
	Let $h\in\{0,1,...,m\}$, by duality
	\vspace{-0.6cm}
	
	\begin{align}\label{12}
	\normadeflpw{\mathcal{A}_{{\mathcal{S},\alpha}}^{m,h}(b,f)}{q}{\lambda}
	&\lesssim\sup_{\normadefpv{g}{q'}\leq1}	\int_{\z}\lambda(x)g(x)\mathcal{A}_{{\mathcal{S},\alpha}}^{m,h}(b,f)(x)\,dx \nonumber\\
	&=\sup_{\normadefpv{g}{q'}\leq1}
	\sum_{Q\in \mathcal{S}}|Q|^{\alpha/n}|(b-b_{Q})^{h}f|_{Q}\int_{Q}|b(x)-b_{Q}|^{m-h}\lambda(x)g(x)\,dx.
	\end{align}
	Let $\tilde{\mathcal{S}}$ be the sparse family provided by Proposition \ref{integral de b por f menor q la integral del operador sparse} and $Q\in\tilde{\mathcal{S}}$. Then, by this proposition and denoting $\eta=\nu^{1/m}$, we have that $\normadeflpw{\mathcal{A}_{{\mathcal{S},\alpha}}^{m,h}(b,f)}{q}{\lambda}$ is bounded by a multiple of
	\vspace{-0.4cm}
	 
	\begin{align*}
		\normadefp{b}{BMO_\eta^{{\ex{\delta}}}}^m
		\sup_{\normadefpv{g}{q'}\leq1}\sum_{Q\in \tilde{\mathcal{S}}} \frac{|Q|^{\alpha/n}\normadefpv{\ca}{n/\delta}^m}{|Q|}\int_{Q} (\mathcal{A}_{\tilde{\mathcal{S}}})^h_\eta f(x)\,dx
		\int_{Q} (\mathcal{A}_{\tilde{\mathcal{S}}})^{m-h}_\eta (\lambda g)(x)\,dx.
	\end{align*}
	Note that, if we denote $\ex{\beta}=n/(m\ex{\delta}+\alpha)$, by Lemma \ref{equivalenciabeta}, $|Q|^{\alpha/n}\normadefpv{\ca}{n/\delta}^m\simeq\normadefpv{\ca}{\beta}$. 
	Thus, we get the same inequality as in \eqref{misma estimacion}.
	So we can proceed in the same way as in the proof of Theorem \ref{teo simb lips bloom} to get the desired result for the case $b\in\lplocc{m}$.
	
	In order to complete the proof we must show that if $b\in BMO_{\eta}^{\ex{\delta}}$ then $b\in\lplocc{m}$.
	In fact, for any compact set $K$ we choose a cube $Q$ with $|Q|>1$ such that $K\subset Q$. Then
	\vspace{-0.4cm}
	
		\begin{align*}
			\int_K |b|^m
			\le \int_Q |b|^m
			\lesssim \int_Q |b-b_Q|^m + \left(\int_Q |b|\right)^m.
		\end{align*}
	Since $b\in\lplocc{1}$, the last term is bounded. For the first term note that, by Lemma \ref{integral de b por f menor q la integral del operador sparse},
	\vspace{-0.4cm}
	
		\begin{align*}
			\int_Q |b-b_Q|^m \ca
			&\lesssim  \normadefp{b}{BMO^{\ex{\delta}}_{\eta}}^m
			\normadefpv{\ca}{n/\delta}^{m}
			\int_{Q} (\mathcal{A}_{\tilde{\mathcal{S}}})^m_\eta \ca\\
			&\lesssim  \normadefp{b}{BMO^{\ex{\delta}}_{\eta}}^m
			\normadefpv{\ca}{n/\delta}^{m}
			\normadefpv{\lambda(\mathcal{A}_{\tilde{\mathcal{S}}})^m_\eta\ca}{q}
			\normadefpv{\lambda^{-1}\ca}{q'}.
		\end{align*}
	By Lemma \ref{factores en Apq implica factores en Apq} and \eqref{op sparse con norma q y pesos },
	\vspace{-0.4cm}
	
	\begin{align*}
		\normadefpv{\lambda(\mathcal{A}_{\tilde{\mathcal{S}}})^m_\eta\ca}{q}
		=\normadeflpw{(\mathcal{A}_{\tilde{\mathcal{S}}})^m_\eta\ca}{q}{\lambda}
		\lesssim 
		\normadefp{\ca}{L^{\ex{q}}_{\lambda\eta^m}}=
		\normadefp{\ca}{L^{\ex{q}}_\mu}.
	\end{align*}
	So we are done.
\end{proof}	
	

\end{document}